\documentclass{article}
\usepackage{bbm}
\usepackage{geometry}
\usepackage{amsfonts}
\usepackage{latexsym, amssymb, amsmath, amscd, amsthm, amsxtra}
\usepackage{mathtools}
\usepackage{enumerate}
\usepackage[all]{xy}
\usepackage{mathrsfs}
\usepackage{fancyhdr}
\usepackage{listings}
\usepackage{hyperref}
\hypersetup{
     colorlinks=true
}

\newtheorem{thm}{Theorem}[section]
\newtheorem{prop}{Proposition}[section]

\newtheorem{Lem}[thm]{Lemma}
\newtheorem{lemma}[thm]{Lemma}
\newtheorem{cor}[thm]{Corollary}

\newcommand{\Ex}[1]{\left\langle {#1}\right\rangle}

\newcommand{\bn}{  \beta }
\def \P {\mathbb P}

\numberwithin{equation}{section}
\begin{document}

\title{Spectral Gap Estimates for Mixed $p$-Spin Models at High Temperature}

\author{Arka Adhikari\thanks{Department of Mathematics, Stanford University, Stanford CA 94305-2125, USA}
\and Christian Brennecke\thanks{Institute for Applied Mathematics, University of Bonn, 53115 Bonn, Germany}
\and Changji Xu\thanks{Center of Mathematical Sciences and Applications, Harvard University, Cambridge MA 02138,USA}
\and Horng-Tzer Yau\thanks{Department of Mathematics, Harvard University, Cambridge MA 02138, USA}}





\maketitle
\begin{abstract}
We consider general mixed $p$-spin mean field spin glass models and provide a method to prove that the spectral gap of the Dirichlet form associated with the Gibbs measure is of order one at sufficiently high temperature. Our proof is based on an iteration scheme relating the spectral gap of the $N$-spin system to that of suitably conditioned subsystems. 
\end{abstract}
\section{Introduction}
In this paper we consider general mixed $p$-spin mean field models with energies described by the Hamiltonian $H_N:\Sigma_N := \{-1,1\}^{N}\to\mathbb{R}$ of the form
\begin{equation*}
   H_N(\sigma):= \sum_{p\geq2} \frac{\beta_p}{N^{(p-1)/2}} \sum_{\substack{1 \leq i_1,\ldots,i_p \leq N }} g_{i_1 i_2\ldots i_p} \sigma_{i_1} \ldots \sigma_{i_p} + \sum_{1 \leq i \leq N} \eta_i \sigma_i,
\end{equation*}
for $\sigma = (\sigma_1,\sigma_2, \ldots,\sigma_N) \in\Sigma_N $. Here, the $g_{i_1 i_2\ldots i_p}$ are i.i.d. standard Gaussian random variables for all tuples $(i_1,i_2\ldots ,i_p)\in \mathbb{N}^p$ with $ i_{k}\neq  i_{l} $ for each $1\leq k\neq l\leq p$ and $p\in \mathbb{N}$. If two indices $ i_{k}=  i_{l}  $ coincide, we set $ g_{i_1\ldots i_k\ldots i_l\ldots i_p} \equiv 0$. Moreover, we assume that the temperature coefficients $\beta_p\geq 0$ are summable and the external field strengths are denoted by $\eta_i \in \mathbb{R}$. The Gibbs measure $\mu_N:\mathcal{P}(\Sigma_N)\to (0,1)$ that corresponds to $H_N$ is defined by
    $$\mu_{N}(\sigma): = \frac{1}{Z_N} e^{H_N(\sigma)}, \hspace{0.5cm}Z_N := \sum_{\sigma\in \Sigma_N} e^{H_N(\sigma)},$$ 
where $Z_N $ denotes the partition function of the system. In the following, we write $\langle \cdot \rangle$ and $\langle \cdot \,; \cdot \rangle$ for the expectation and covariance, respectively, related to $\mu_N$.

We are interested in the spectral gap of the Dirichlet form w.r.t. $\mu_N$. More precisely, denote the discrete partial derivative in direction $\sigma_i, i\in \{1,\dots, N\}$, by 
        $$(\partial_i f)(\sigma) := \frac{1}{2}\big(f(\sigma) - f(\hat{\sigma}_i)\big), \text{ where } \hat \sigma_i := (\sigma_1,...,-\sigma_i,...,\sigma_N).$$
Then we can write $H_N(\sigma) = B_j(\sigma) \sigma_j + H_N^{(j)}(\sigma)$ s.t. the $j$-th cavity field $ B_j:\Sigma_N\to \mathbb{R}$, defined by
        \begin{equation*}
             B_j(\sigma)=\sigma_j\partial_j H_N(\sigma),
        \end{equation*}
and $ H_N^{(j)} = H_N - B_j$ do not depend on $\sigma_j$ (notice that $\partial_i \sigma_j = \delta_{ij}\sigma_j $). Heuristically, one may think of $B_j$ as the effective magnetic field acting on $\sigma_j$ that is caused by the remaining spins $\sigma_i$ for $i\neq j$.
   
With these definitions, we define the Dirichlet form weighted by the cavity fields $B_j$ through
    \begin{equation}\label{eq:defDf}
    D(f) := \sum_{ j =1}^N \Ex{ \cosh^{-2}(B_j) (\partial_j f)^2}
    \end{equation}
for every $f:\Sigma_N\to \mathbb{R}$ and the (inverse of its) spectral gap by
\begin{equation*}
    a_{H_N}:= \sup_{\substack{f: \Sigma_N \to \mathbb R, \\ f\neq const. }} \frac{\langle f ; f\rangle}{ D(f)}.
\end{equation*}

Our main result is the following theorem.
\begin{thm}
\label{thm:main}
 For every $\epsilon > 0$, if $\bn:= \sum_{p\geq 2} \sqrt{p^3 \log p}\,\beta_p$ is sufficiently small (depending on $\epsilon$), then there exist constants $c , C >0$, independent of $N\in \mathbb{N}$, s.t. 
    $$ \mathbb{P} \big( \{a_{H_N} > 1 + \epsilon\} \big) \leq C  \exp(-c\, N). $$
\end{thm}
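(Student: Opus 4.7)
The plan is an iteration in $N$ based on the law of total variance. Fix $f:\Sigma_N\to\mathbb{R}$ and an index $j\in\{1,\dots,N\}$, and write $\sigma_{\neq j}:=(\sigma_i)_{i\neq j}$ and $g_j(\sigma_{\neq j}):=\Ex{f\mid\sigma_{\neq j}}$. Then under $\mu_N$,
\begin{equation*}
\Ex{f;f} \;=\; \Ex{\,\Ex{f;f\mid\sigma_{\neq j}}\,} + \Ex{g_j;g_j}.
\end{equation*}
The conditional law of $\sigma_j$ given $\sigma_{\neq j}$ is the two-point measure with mean $\tanh B_j$, so a direct computation using $(\partial_j f)(\sigma)=\tfrac12(f(\sigma)-f(\hat\sigma_j))$ gives $\Ex{f;f\mid\sigma_{\neq j}} = \cosh^{-2}(B_j)(\partial_j f)^2$. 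Hence the first term on the right is exactly the $j$-th summand of $D(f)$, and the problem reduces to bounding $\Ex{g_j;g_j}$.

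The quantity $\Ex{g_j;g_j}$ is the Gibbs variance of $g_j$ under the marginal measure $\tilde\mu_j$ of $\sigma_{\neq j}$. Integrating out $\sigma_j$ yields
\begin{equation*}
\tilde\mu_j(\sigma_{\neq j}) \;\propto\; \exp\bigl(H_N^{(j)}(\sigma) + \log\cosh B_j(\sigma)\bigr),
\end{equation*}
a mixed $p$-spin Hamiltonian on $N-1$ spins plus a $\log\cosh B_j$ tilt. Since $\log\cosh x = \tfrac12 x^2 + O(x^4)$ and $\bn$ is small, the tilt is a controlled perturbation. I would therefore run the induction on a slightly enlarged class of measures (mixed $p$-spin plus small tilts of this form) so that the inductive hypothesis still applies to $\tilde\mu_j$ and gives it an inverse spectral gap close to $1$ with high probability.

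To close the recursion, the Dirichlet form of $g_j$ under $\tilde\mu_j$ must be bounded by the remaining $N-1$ terms of $D(f)$. For $i\neq j$,
\begin{equation*}
\partial_i g_j \;=\; \Ex{\partial_i f\mid\sigma_{\neq j}} \;+\; \bigl(\text{correction from } \partial_i \log\cosh B_j = (\partial_i B_j)\tanh B_j\bigr).
\end{equation*}
A high-probability bound $|\partial_i B_j| = O(\bn/\sqrt{N})$ uniform in $i\neq j$, coming from Gaussian concentration of the disorder, together with Cauchy--Schwarz and the inductive hypothesis for $\tilde\mu_j$, should yield a single-step inequality
\begin{equation*}
\Ex{f;f} \;\leq\; \Ex{\cosh^{-2}(B_j)(\partial_j f)^2} + (1+\delta)\sum_{i\neq j}\Ex{\cosh^{-2}(B_i)(\partial_i f)^2}
\end{equation*}
with $\delta = O(\bn)$. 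Averaging over $j=1,\dots,N$ then produces $\Ex{f;f}\leq (1+\delta)D(f)$, which gives $a_{H_N}\leq 1+\epsilon$ once $\bn$ is small enough.

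The exponentially small failure probability $C\exp(-cN)$ is inherited from Gaussian concentration of the disorder: each of the uniform estimates invoked above (on $\partial_i B_j$, on $\log\cosh B_j$, on overlap-type quantities) fails with probability at most $\exp(-cN)$. The main technical obstacle is controlling the $\log\cosh$ tilts under iteration, since each marginalization step injects new multi-spin interactions into the effective Hamiltonian; keeping the induction closed and the accumulated error $\delta$ uniformly $O(\bn)$ over all $N$ steps appears to require the full strength of the hypothesis $\bn = \sum_p \sqrt{p^3 \log p}\,\beta_p \ll 1$, where the weight $\sqrt{p^3\log p}$ precisely accounts for the combinatorial and Gaussian-concentration costs of handling the $p$-spin interaction at each step.
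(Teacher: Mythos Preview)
Your decomposition via the law of total variance is natural, and the conditional-variance identity $\Ex{f;f\mid\sigma_{\neq j}}=\cosh^{-2}(B_j)(\partial_jf)^2$ is correct. But the scheme has a structural gap at the point you yourself flag: closing the induction after marginalization.

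When you integrate out $\sigma_j$, the effective Hamiltonian on $\Sigma_{N-1}$ is $H_N^{(j)}+\log\cosh B_j$, and the latter is \emph{not} a small perturbation in the relevant sense: $B_j$ is order one (it contains the external field $\eta_j$ and an order-one Gaussian part), so $\log\cosh B_j$ injects genuine new multi-spin interactions of all orders. After $k$ marginalization steps you carry $k$ nested $\log\cosh$ terms, each built from the cavity field of an already-tilted Hamiltonian. You propose enlarging the inductive class to absorb these, but give no definition of that class, no argument that it is closed under marginalization, and no mechanism that keeps the accumulated tilt $O(\bn)$ uniformly in $k$. This is not a technicality---without it the recursion does not close. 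A second, related issue: even at a single step, the spectral-gap inequality for $\tilde\mu_j$ produces its Dirichlet form with the \emph{tilted} cavity fields $\tilde B_i^{(j)}=B_i+\sigma_i\partial_i\log\cosh B_j$, and you have not shown how to convert $\sum_{i\neq j}\langle\cosh^{-2}(\tilde B_i^{(j)})(\partial_i g_j)^2\rangle_{\tilde\mu_j}$ into $(1+\delta)\sum_{i\neq j}\langle\cosh^{-2}(B_i)(\partial_i f)^2\rangle_{\mu_N}$. The pointwise bound $|\partial_iB_j|=O(\bn/\sqrt N)$ is not by itself enough to control the cross terms coming from $\partial_ig_j$.

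The paper avoids both problems by conditioning in the \emph{opposite} direction: it fixes $\sigma_j$ rather than integrating it out. The conditioned system $[A\cup\{j\},B]$ is again a mixed $p$-spin Hamiltonian on $N-k-1$ spins (the spins in $A$ simply become part of the external field), so the inductive class is exactly the original one and no tilts accumulate. The price is that the residual term in the total-variance decomposition is now the variance of a function of $\sigma_j$ alone, namely $\langle f;\sigma_j\rangle_{[A,B]}^2/(1-(m_j^{[A,B]})^2)$; the bulk of the paper (Sections~3--4) is devoted to controlling the sum of these correlation terms, first by a crude bound via the covariance matrix $M^{[A,B]}$ and then by an improved estimate through an auxiliary matrix $S^{[A,B]}$ whose operator norm is shown to be $O(\bn^2)$. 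This correlation-matrix machinery, not merely pointwise control of $\partial_iB_j$, is what makes the iteration close with an error that does not grow with $N$.
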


\vspace{0.2cm}
\noindent \textbf{Remarks:} 
\begin{enumerate}
    \item We clearly have $D(f)=0$ if and only if $f$ is constant, i.e. $f\equiv \langle f\rangle $. Projecting onto the orthogonal complement of such functions in $L^2(d\mu_N)$, observe that $a_{H_N}$ is indeed the inverse of the spectral gap of $D$ (extended in the obvious way to a non-negative quadratic form in $L^2(d\mu_N)$) above its zero ground state energy. Theorem \ref{thm:main} shows that, at sufficiently high temperature, the spectral gap is of order one. 
    \item Notice that, due to the weight factors $ \cosh^{-2}(B_j)\leq 1$, Theorem \ref{thm:main} implies also that the standard Dirichlet form in $L^2(d\mu_N)$ has a spectral gap of order one, at sufficiently high temperature. 
    \item Choosing linear functions $f = \sum_{j=1}^N c_j \sigma_j $ for $ \|c\|_2 \neq 0$, Theorem \ref{thm:main} implies in particular that the correlation matrix $M = (\langle \sigma_i;\sigma_j\rangle)_{1\leq i,j\leq N}$ has operator norm bounded by $1+ \epsilon$ with high probability, if $\bn $ is small enough.  
\end{enumerate}

Spectral gap inequalities appear most prominently in the study of Markov chains. The difference between the largest and second largest eigenvalues of the Markov chain essentially dictates how fast the chain can equilibriate; with a large spectral gap, one can expect exponentially fast equilibriation. Markov chain Monte Carlo is one of the most efficient tools used to model physical systems. Thus, establishing a spectral gap inequality for a particular Markov chain model of interest would be critical to show numerical modelers that their results would result in accurate predictions of the behavior of these statistical models here. One classical Markov chain algorithm is the Glauber dynamics; the spectral gap we prove shows that the Glauber dynamics on the hypercube for the $p$-spin spin glass will equilibriate very quickly.
 
 Due to its importance in the numerical sampling of statistical models, spectral gap inequalities have been of central interest to researchers. In the special case $\beta_2 > 0 $ and $\beta_p=0$ for $p>2$, our model reduces to the well-known Sherrington-Kirkpatrick model \cite{SK}. In this case, a logarithmic Sobolev inequality (LSI) follows for $\beta < 1/4$ from the main result of \cite{BB19}, whose proof is based on a single-step renormalisation and Bakry-\'Emery theory \cite{BaEm}.  However, the proof applied in this case is not readily generalizable to general mixed $p$-spin models. Indeed, the quadratic nature of the Sherrington-Kirkpatrick model allowed the authors of \cite{BB19} to `complete the square' on the Sherrington-Kirkpatrick Hamiltonian (in fact, the results of \cite{BB19} apply to general quadratic models) and represent it as the convolution of a continuous model with a coupled Bernoulli variable. At this point, one can prove an LSI separately for each part by classical methods and then prove an LSI for the convolution through a short computation. The fact that the interaction in $H_N$, defined above, is not quadratic means that this method fails at the first step and there seems to be no natural way to decompose the $p$-spin model into simpler models as in \cite{BB19}.

The paper \cite{EKZ22,AJKPV21,CE22},  prove spectral gap inequalities or LSI via a method of stochastic localization.
These methods allow the authors to interpolate the Sherrington-Kirkpatrick model to a mixture of 2-spin Ising models with a rank 1 quadratic interaction.
This reduces the question of the spectral gap of the SK model to the  spectral gap of a simpler model. Since the $p$-spin models have a more complicated
spin interaction, it is unclear that current methods would allow one to easily decompose the interaction of the $p$-spin model to simpler parts. 
The paper \cite{AMS22} combines stochastic localization with approximate message passing in order to sample distributions from the Sherrington-Kirkpatrick models; this method is different from Markov chain Monte Carlo. The analysis in this work used a comparison between the Sherrington-Kirkpatrick Hamiltonian and the planted model; this used the quadratic interaction of the SK Hamiltonian, and an analysis of this form does not easily extend to the mixed $p$-spin model. 
 For spectral gap estimates in the physically very different low temperature regime, see, however, \cite{ArJa} and the references therein, where it is shown under rather general conditions that a spectral gap inequality as implied by Theorem \ref{thm:main} does not hold true. 

Since these methods do not readily extend to $p$-spin case, we use a different approach inspired by ideas related to self-consistent relations and martingale arguments as introduced in \cite{LY93,LY98}.
Theorem \ref{thm:main} is the first spectral gap bound for general mixed $p$-spin models at sufficiently high temperature.

\section{Outline of the Proof of Theorem \ref{thm:main}}
A simple computation shows that the spectral gap of a system with only one spin is equal to one (see Section \ref{sec:pfmain} below for the short argument). In order to estimate the spectral gap of the $N$-spin system, we proceed iteratively over the system size. To this end, let us first introduce some additional notation. For disjoint subsets $ A, B\subset \{1,\dots, N\}$, we define $ H_N^{ [A,B]} \equiv H_{N,\sigma_A}^{[A,B]}:  \Sigma_{N -|A\cup B| }\to\mathbb{R}$ by
        \[H_{N, \sigma_A }^{[A,B]}( \sigma_{(A \cup B)^c}) =  \sum_{p\geq2} \frac{\beta_p}{N^{(p-1)/2}} \sum_{\substack{ i_1,\ldots,i_p \in B^c}} g_{i_1 i_2\ldots i_p} \sigma_{i_1} \ldots \sigma_{i_p} + \sum_{ i \in B^c} \eta_i \sigma_i,  \]
where $S^c = \{1,\dots,N\}\setminus S $ for $ S\subset \{1,\dots, N\}$, $\sigma_S$ denotes $\sigma_S := (\sigma_i)_{i \in S}$ (in particular, we identify $ \sigma_i = \sigma_{\{i\}}$) and where the coordinates of $\sigma_A$ are understood to be fixed. Observe that $ H_N^{[A,B]}$ plays the role of the energy of the subsystem consisting of the spins $\sigma_i, i \in (A\cup B)^c$, conditionally on the spins $ \sigma_j $ for $j\in A$ and with the particles $\sigma_j$ for $j\in B$ removed from the system. We then denote by $ \langle \cdot \rangle_{[A,B]} \equiv \langle \cdot \rangle_{[A,B]}(\sigma_A)$ the conditional Gibbs measure induced by the reduced Hamiltonian $ H_N^{[A,B]}$ and, in analogy to \eqref{eq:defDf}, we set
    \begin{equation*}
    D_{[A,B]}(f) := \sum_{ j  \not \in A \cup B}\Ex{ \cosh^{-2}(B_j^{[A,B]}) (\partial_j f)^2}_{[A,B]},
    \end{equation*}
where $B_j^{[A,B]} := \sigma_j \partial_j H_N^{[A,B]} $. Notice that $B_j^{[A,B]} $ is formally obtained by setting $\sigma_B$ equal to zero and fixing $\sigma_A$ in $B_j$. More explicitly, it is given by
        \begin{equation}
            \label{eq:BjAB}
            \begin{split}
                B_j^{[A,B]}(\sigma) = &\;\sum_{p\geq2} \frac{\beta_p}{N^{(p-1)/2}}\bigg( \sum_{\substack{ i_2,\ldots,i_p \in B^c }} g_{j i_2\ldots i_p} \sigma_{i_2} \ldots \sigma_{i_p} +\ldots  \\
                &\hspace{2.5cm}\ldots+ \!\!\!\! \sum_{\substack{ i_1,\ldots,i_{p-1} \in B^c}} g_{i_1 i_2\ldots i_{p-1}j} \sigma_{i_1}\sigma_{i_2} \ldots \sigma_{i_{p-1}}  \bigg) +   \eta_j  
            \end{split}
        \end{equation}
for $\sigma = (\sigma_A, \sigma_{A^c\cap B^c})\in \Sigma_{N-|B|}$, with $\sigma_A \in \Sigma_{|A|}$ understood as being fixed. 

Next, we define the spectral gap of the subsystem related to $A$, $B$ by
\begin{equation}
\label{eq:def-aAB}
    a_{[A,B]} \equiv a_{[A,B]}(\sigma_A) :=  \sup_{\substack{ f:\Sigma_{N -|A\cup B| }\to \mathbb{R}, \\f\neq const.}} \frac{\langle f ; f\rangle_{[A,B]}}{D_{[A,B]}(f)}\,,
\end{equation}
and the maximal spectral gap over all $N-k$ spin subsystems by
\begin{equation}\label{eq:defamax}
a_{N-k}: =  \max_{\substack{A,B\subset \{1,\dots, N\}:\\A\cap B=\emptyset, |A\cup B|=k }} \max_{\sigma_A \in \Sigma_{|A|}}a_{[A,B]}(\sigma_A)\,.
\end{equation}
For $  i, j  \in (A\cup B)^c$, we finally set $m^{[A,B]}_i = \Ex{\sigma_i}_{[A,B]}$ and $m^{[A,B]}_{ij} = \Ex{\sigma_i;\sigma_j}_{[A,B]}$. 

Let us point out that, throughout the rest of this work, we keep the dependence of quantities like $ \langle \cdot\rangle^{[A,B]}, H_N^{[A,B]}$, etc. on $\sigma_A \in \Sigma_{|A|}$ implicit, to ease the notation. 

Our starting point for the proof of Theorem \ref{thm:main} is the following lemma.
\begin{Lem}[Conditioning Lemma]\label{lem:Condition}
Let $f:\Sigma_{N-k}\to \mathbb{R}$ and let $A,B $ be disjoint with $|A \cup B | = k$, then 
\begin{equation}
\label{eq:condition}
    \langle f; f \rangle_{[A,B]} \leq \left(1-\frac{1}{N-k}\right)a_{N - k-1} D_{[A,B]}(f) + \frac{1}{N-k} \sum_{j \not \in A \cup B} \frac{\langle f; \sigma_j \rangle^2_{[A,B]}}{1- (m^{[A,B]}_j)^2}.
\end{equation}
\end{Lem}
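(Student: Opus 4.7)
The plan is to fix one spin index $j \notin A\cup B$, apply the law of total variance to split $\langle f;f\rangle_{[A,B]}$ into a conditional variance (with $\sigma_j$ integrated) plus the variance of the conditional mean, and then average the resulting identity over the $N-k$ available choices of $j$. The two pieces will match the Dirichlet form term and the linear-functional correction in \eqref{eq:condition} respectively.

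For fixed $j \notin A\cup B$, the tower property gives
$$\langle f;f\rangle_{[A,B]} = \langle \langle f;f\rangle_{[A\cup\{j\},B]}\rangle_{[A,B]} + \langle F_j;F_j\rangle_{[A,B]}, \qquad F_j := \langle f\rangle_{[A\cup\{j\},B]}.$$
Since $\sigma_A$ is fixed under $\langle \cdot\rangle_{[A,B]}$, the function $F_j$ depends only on $\sigma_j \in \{-1,+1\}$, hence is affine in $\sigma_j$. A direct computation on the two-point space, combined with the identity $\langle F_j;\sigma_j\rangle_{[A,B]} = \langle f;\sigma_j\rangle_{[A,B]}$ (obtained by moving the constant $\sigma_j$ inside the inner bracket and applying the tower property again), yields
$$\langle F_j;F_j\rangle_{[A,B]} = \frac{\langle f;\sigma_j\rangle_{[A,B]}^2}{1-(m_j^{[A,B]})^2}.$$
For the conditional variance I would invoke the definition \eqref{eq:defamax}: since $|A\cup\{j\}\cup B|=k+1$, this gives $\langle f;f\rangle_{[A\cup\{j\},B]}\leq a_{N-k-1}\, D_{[A\cup\{j\},B]}(f)$.

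Averaging the decomposition over the $N-k$ values of $j$ reproduces the second sum in \eqref{eq:condition} directly, and for the first term it suffices to establish
$$\sum_{j\notin A\cup B}\bigl\langle D_{[A\cup\{j\},B]}(f)\bigr\rangle_{[A,B]} = (N-k-1)\, D_{[A,B]}(f),$$
which accounts for the factor $1-1/(N-k)$. The key observation here is that $B_i^{[A\cup\{j\},B]}$ and $B_i^{[A,B]}$ coincide as explicit functions: both are given by formula \eqref{eq:BjAB} with sums over indices in $B^c$, the only difference being whether $\sigma_j$ is treated as frozen or as a free variable. Applying the tower property to pull $\langle\cdot\rangle_{[A\cup\{j\},B]}$ into $\langle\cdot\rangle_{[A,B]}$ and swapping the sums over $i$ and $j$ (each $i\notin A\cup B$ pairs with $N-k-1$ distinct $j$'s) produces the identity. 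The only subtleties are therefore structural: one must verify the invariance of the cavity fields under further conditioning so that the $\cosh^{-2}$ weights match, and handle the double-sum bookkeeping correctly; beyond these observations the argument is purely algebraic.
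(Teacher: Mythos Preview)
Your proposal is correct and follows essentially the same approach as the paper: total variance decomposition for fixed $j$, the spectral gap bound on the conditional variance, the two-point-space identity for the variance of $F_j$, and averaging over $j$ together with the observation that $B_i^{[A\cup\{j\},B]}=B_i^{[A,B]}$ as functions so that the tower property collapses the inner expectation. The paper's proof is identical in structure and in all the key observations you flag.
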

This inequality links the spectral gap $a_{[A,B]}$ over the $N-k$ spin system related to $A$ and $B$ with the maximal spectral gap $a_{N-k-1}$ over the $N-k-1$ spin systems, and our proof of Theorem \ref{thm:main} is based on employing this relation inductively. The key point is then to show that the second term on the r.h.s. in \eqref{eq:condition} is under good control during the iteration. Heuristically, note that if we write $ f=\sum_{j\not \in A\cup B} c_j \sigma_j + h $ such that $ \langle h;\sigma_j\rangle = 0 $ (that is, up to a constant, $h$ is the projection of $f$ onto the orthogonal complement of the functions $\sigma\mapsto \sigma_j - m_j^{[A,B]}$ in $L^2(d\langle \cdot\rangle_{[A,B]})$), then 
        $$ \sum_{j \not \in A \cup B} \frac{\langle f; \sigma_j \rangle^2_{[A,B]}}{1- (m^{[A,B]}_j)^2} = \| (\Lambda^{[A,B]})^{1/2} M^{[A,B]} \mathbf c\|_2^2  $$
for $\Lambda^{[A,B]} = \mathrm{diag}\big((1 - (m^{[A,B]}_i)^2)^{-1} \big)$ and $M^{[A,B]} = (m_{ij}^{[A,B]})_{i,j \not \in A \cup B}$. In particular, motivated by well-known results on the correlation matrix like for example \cite{Hanen}, if we could ignore the order one entries on the diagonal of the correlation matrix $M^{[A,B]} $, we might expect the right hand to be of size $\epsilon\langle f; f\rangle$ for small $\epsilon$ if the inverse temperature coefficients are small enough, and with such a bound one could easily iterate \eqref{eq:condition} to conclude Theorem \ref{thm:main}. 

Of course, we can not simply ignore the diagonal of $M^{[A,B]} $ and therefore, we need to proceed slightly differently. First, in Section \ref{sec:3}, we give the rough bound 
		$$\sum_{j\not \in A\cup B} \frac{\langle f; \sigma_j \rangle^2_{[A,B]}}{1- (m^{[A,B]}_j)^2} \leq C \sqrt{N-k}\langle f; f \rangle_{[A,B]} $$
on a set of probability close to one, uniformly in the subsets $A,B$. This bound implies that the maximal spectral gap can not have a large jump after adding one additional spin into the system. In the second step, we then control the error term in \eqref{eq:condition}  through an improved iteration bound which, loosely speaking, has the form
 		$$ \sum_{j\not \in A\cup B}\frac{\langle f; \sigma_j \rangle^2_{[A,B]}}{1- (m^{[A,B]}_j)^2} \leq  \big(1+O(\epsilon)\big) D_{[A,B]}(f) + O(\epsilon)\langle f; f \rangle_{[A,B]}$$
 for small $\epsilon$ if the temperature coefficients are small and if we have a some a priori control on $ a_{N-k-1}$. To obtain this bound, it turns out that we can follow a similar heuristics as above for the correlation matrix, but with the correlation matrix replaced by another matrix whose norm indeed turns out to be small at high temperature. Equipped with this improved estimate and the continuity argument from the first step, we use \eqref{eq:condition} inductively and conclude Theorem \ref{thm:main} in Section \ref{sec:pfmain}. 
 
 Let us now conclude this overview with the proof of \eqref{eq:condition}.
\begin{proof}[\bf Proof of Lemma \ref{lem:Condition}]
By the total variance formula, we have for any $j \not \in A \cup B$
    \begin{equation}
    \begin{split}
    \label{eq:totalvar}
    \langle f; f \rangle_{[A,B]} &= \langle \langle f(\cdot, \sigma_j);f(\cdot, \sigma_j) \rangle_{[A \cup \{j\},B]} \rangle_{[A,B]} \\
    & \hspace{.5cm}+  \langle \langle f(\cdot, \sigma_j) \rangle_{[A \cup \{j\},B]} ; \langle f(\cdot, \sigma_j) \rangle_{[A \cup \{j\},B]} \rangle_{[A,B]}\,.
    \end{split}
    \end{equation}
By definition \eqref{eq:defamax}, we can bound the integrand in the first term on the r.h.s. in \eqref{eq:totalvar} by 

        \[\begin{split}
         &\langle \langle f(\cdot, \sigma_j);f(\cdot, \sigma_j) \rangle_{[A \cup \{j\},B]} \rangle_{[A,B]} \\
         &\le a_{N - k-1} \sum_{l \not \in A\cup\{j\} \cup B}  \Ex{ \langle\cosh^{-2}(B_l^{[A\cup\{j\},B]}) (\partial_l f(\cdot, \sigma_j))^2 \rangle_{[A \cup \{j\},B]} }_{[A,B]}  \\
        & = a_{N - k-1} \sum_{l \not \in A\cup\{j\} \cup B}  \Ex{\cosh^{-2}(B_l^{[A ,B]}) (\partial_l f )^2 }_{[A,B]}\,, 
        \end{split}\]
so that averaging over $j\not \in A\cup B $ implies
\begin{align*}
    \langle f; f \rangle_{[A,B]} \leq &\left(1-\frac{1}{N-k}\right)a_{N - k-1} D_{[A,B]}(f) \\
    &+ \frac{1}{N-k} \sum_{j \not \in A \cup B} \langle \langle f(\cdot, \sigma_j) \rangle_{[A\cup\{j\},B]}; \langle f(\cdot, \sigma_j) \rangle_{[A\cup\{j\},B]} \rangle_{[A,B]}.
\end{align*}
Finally, a straightforward computation shows that
        $$\Ex{g(\sigma_j);g(\sigma_j)}_{[A,B]} = \frac{\Ex{g(\sigma_j);\sigma_j}_{[A,B]}^2}{1 - (m^{[A,B]}_j)^2} $$
for every function $\Sigma_{N-k}\ni \sigma\mapsto g (\sigma)=g(\sigma_j)$ that only depends on $\sigma_j$. Choosing $\sigma\mapsto g (\sigma)= \langle f(\cdot, \sigma_j) \rangle_{[A\cup\{j\},B]} $, we obtain
    \begin{equation*}
    \begin{split}
    \langle   g; g  \rangle_{[A,B]} &= \frac{\langle \langle f(\cdot, \sigma_j) \rangle_{[A\cup\{j\},B]}; \sigma_j \rangle^2_{[A,B]}}{1- (m^{[A,B]}_j)^2}=  \frac{\langle f; \sigma_j \rangle^2_{[A,B]}}{1- (m^{[A,B]}_j)^2}\,.
    \end{split}
    \end{equation*}
\end{proof}

\section{Continuity argument}\label{sec:3}  

As mentioned in the previous section, our first goal is to show that, assuming $a_{N-k-1}$ to be of order one, the rate with which $a_{N-k}$ may increase is not too large, with high probability. To make this more precise, let us set from now on
		\begin{equation}\label{eq:defbeta}
		\beta := \sum_{p\geq 2} \sqrt{p^3 \log p}\,\beta_p 
		\end{equation}
and let us define the good event $\Omega$ by
    \begin{equation}
    \label{eq:def-omg}
    \Omega: = \bigg\{  \sup_{ \substack{  A\cap B=\emptyset  }}\sup_{\sigma_A\in \Sigma_{|A|} } \sup_{\sigma \in \Sigma_{N-|A\cup B|} } \big\| \big(\partial_i B_j^{[A,B]} (\sigma)\big)_{ 1 \leq i,j \leq N-|A\cup B|}\big\| \leq  5 \bn\bigg\}\,.
  \end{equation}
Then, we prove in Section \ref{sec:est} below that $ \mathbb{P}(\Omega)\geq 1-e^{-N}$ (see Lemma \ref{lem:partialbnd}). In the following, let us also denote by $ C_\beta$ the constant
        \begin{equation} 
        \label{eq:def-cbeta}
        C_\beta: = (10^3 \bn )^2 \exp(10^3 \bn),  
        \end{equation}
whose specific form becomes clearer in Section \ref{sec:est}. The main result of this section reads as follows. 
\begin{prop}
\label{continuity}
Let $\bn $ be as in \eqref{eq:defbeta}, $\Omega$ as in \eqref{eq:def-omg}, $C_\beta$ as in \eqref{eq:def-cbeta} and let $\epsilon \in (0,10^{-2)}$. Assume that $\beta$ is sufficiently small. Then, there exists a universal constant $C>0$ such that
        \[\begin{split}&  \mathbb{P}\bigg( \Omega \cap \bigcup_{k=0}^{N-2}\Big\{ a_{N-k-1} C_\beta <\epsilon\text{ and }a_{N-k} >  5a_{N-k-1}  \Big\}  \bigg) \\
        & \hspace{0.5cm}\leq   C\,e^{( C \epsilon/C_\beta +2) \log N + N \log 4  -  N   C_\beta^{2}(\epsilon^{2} C\beta^2)^{-1}  }. \end{split} \]
\end{prop}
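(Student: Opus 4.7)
\textbf{Proof sketch for Proposition \ref{continuity}.}

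My plan is to reduce the event $\{a_{N-k} > 5 a_{N-k-1}\}$ on a subsystem $(A,B)$ with $|A \cup B| = k$ to an operator-norm lower bound on the normalized correlation matrix, and then preclude this lower bound via Gaussian concentration followed by a union bound over subsystems. Fix $(A,B)$ and suppose $a_{[A,B]} > 5 a_{N-k-1}$. Then some $f$ satisfies $\langle f; f\rangle_{[A,B]} > 5 a_{N-k-1} D_{[A,B]}(f)$. Inserting this into \eqref{eq:condition} and rearranging produces
\[
\sum_{j \notin A \cup B} \frac{\langle f; \sigma_j\rangle_{[A,B]}^2}{1 - (m_j^{[A,B]})^2} \;\ge\; \frac{4(N-k)+1}{5}\, \langle f; f\rangle_{[A,B]}.
\]
Setting $\tilde\sigma_j = (\sigma_j - m_j^{[A,B]})/\sqrt{1 - (m_j^{[A,B]})^2}$ and letting $\tilde M^{[A,B]}$ denote the Gram matrix of $(\tilde\sigma_j)_{j \notin A \cup B}$ with respect to $\langle\,\cdot\,;\,\cdot\,\rangle_{[A,B]}$, the left-hand side equals $\sum_j \langle f; \tilde\sigma_j\rangle_{[A,B]}^2$ and is at most $\|\tilde M^{[A,B]}\|_{\mathrm{op}}\, \langle f; f\rangle_{[A,B]}$ (the operator norm of the map $f \mapsto (\langle f;\tilde\sigma_j\rangle)_j$ squared equals $\|\tilde M^{[A,B]}\|_{\mathrm{op}}$). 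Hence the putative jump forces $\|\tilde M^{[A,B]}\|_{\mathrm{op}} \ge 4(N-k)/5$.

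The core task is to preclude this quantitatively on the event $\Omega \cap \{a_{N-k-1} C_\beta < \epsilon\}$, where I expect $\|\tilde M^{[A,B]}\|_{\mathrm{op}} = O(\sqrt{N-k})$ — far below the threshold $4(N-k)/5$. I would proceed via the Frobenius bound
\[
\|\tilde M^{[A,B]}\|_{\mathrm{op}}^2 \,\le\, (N-k) + \sum_{i \neq j} \frac{(m_{ij}^{[A,B]})^2}{(1 - (m_i^{[A,B]})^2)(1 - (m_j^{[A,B]})^2)},
\]
and control each off-diagonal correlation $m_{ij}^{[A,B]}$ via the identity $m_{ij}^{[A,B]} = \partial_{\eta_i} \langle \sigma_j\rangle_{[A,B]}$. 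Differentiating through the self-consistent (cavity) equation for the conditional magnetization and using (i) the a priori bound $a_{N-k-1} \le \epsilon/C_\beta$ — which gives linear stability of that fixed-point equation — together with (ii) the cavity-field Jacobian bound $\|(\partial_i B_j^{[A,B]})\| \le 5\bn$ valid on $\Omega$, one extracts a Lipschitz representation of $m_{ij}^{[A,B]}$ as a smooth functional of the underlying Gaussian disorder, with Lipschitz constant of order $\epsilon\bn/C_\beta$ per disorder coordinate (heuristically). Gaussian Lipschitz concentration for the individual entries, combined with a Hanson--Wright-type inequality applied to the sum of squares, then yields the desired tail $\exp(-cN C_\beta^2/(\epsilon^2 \bn^2))$ on $\|\tilde M^{[A,B]}\|_F^2$ per fixed triple $(A,B,\sigma_A)$.

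Finally, a union bound over all such triples — whose total count is $\sum_{k=0}^{N-2} \sum_{(A,B) : |A \cup B| = k} 2^{|A|} \le 4^N$ — contributes the $N \log 4$ term in the exponent, while a polynomial-in-$N$ net discretizing the admissible values of $a_{N-k-1}$ (needed to activate the Frobenius estimate with the sharp prefactor $\epsilon/C_\beta$) yields the $(C\epsilon/C_\beta + 2) \log N$ correction. The main obstacle, and the technical heart of this section, is the differentiation step: converting the scalar spectral-gap hypothesis $a_{N-k-1} \le \epsilon/C_\beta$ into sharp, entrywise Lipschitz bounds on $m_{ij}^{[A,B]}$ as functionals of the Gaussian disorder, uniformly over the conditioned configuration $\sigma_A \in \Sigma_{|A|}$ and over the removed set $B$. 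The delicate interplay between the conditioning lemma, the cavity derivative identity, and the Jacobian bound encoded in $\Omega$ is what drives the probability exponent appearing in the proposition.
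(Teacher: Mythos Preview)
Your reduction step is correct and matches the paper: via the conditioning lemma, $a_{[A,B]} > 5a_{N-k-1}$ forces $\|\tilde M^{[A,B]}\|_{\mathrm{op}} \ge \tfrac{4}{5}(N-k)$. From here on, however, your plan diverges from the paper and has a genuine gap.

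The problem is your proposed Lipschitz-in-disorder bound for $m_{ij}^{[A,B]}$. The correlations are highly nonlinear functionals of the couplings $(g_{i_1\ldots i_p})$, and any Lipschitz control you extract from the cavity/self-consistency relations will hold only \emph{on the random event} $\{a_{N-k-1}C_\beta < \epsilon\}$, not globally. Standard Gaussian concentration does not apply to functionals that are Lipschitz only on a disorder-dependent set, so the tail $\exp(-cNC_\beta^2/(\epsilon^2\beta^2))$ is not justified. Moreover, even granting entrywise control, the Frobenius bound is too weak: summing $(N-k)^2$ off-diagonal terms each of order $\epsilon/C_\beta$ gives $\|\tilde M\|_{\mathrm{op}} \lesssim \sqrt{\epsilon/C_\beta}\,(N-k)$, which is linear in $N-k$ and does not beat the threshold $\tfrac45(N-k)$.

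The paper's argument avoids both issues by a dichotomy you are missing. First, using the operator structure (not Frobenius), it proves \emph{deterministically} on $\Omega\cap\{a_{N-k-1}C_\beta<\epsilon\}$ that $\|\tilde M^{[A,B]}\|_{\mathrm{op}} \le C\sqrt{\epsilon/C_\beta}\sqrt{N-k}$; the key input is that $\langle\sigma_j;g\rangle_{[A,B]}^2 \le (C\epsilon/C_\beta)(1-(m_j^{[A,B]})^2)^2 D_{[A,B\cup\{j\}]}(g)$ for $g$ independent of $\sigma_j$, which follows from the spectral-gap hypothesis applied to $\langle\cdot\rangle_{[A,B\cup\{j\}]}$. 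This already rules out the jump for all $k$ with $N-k > T := C'\epsilon/C_\beta$, with no probabilistic estimate needed. Second, for the remaining finitely many values $N-k \le T$, the paper observes that with only $T$ free spins the cavity field $B_j^{[A,B]}$ is close to the $\sigma_A$-measurable constant $B_j^{[A,A^c]}$; the difference is a \emph{linear} Gaussian in the disorder with variance $O(T\beta^2/N)$, so concentration is straightforward and survives the $4^N$ union bound. The factor $(C\epsilon/C_\beta+2)\log N$ in the exponent comes from the combinatorics of choosing $A,B$ with $|A\cup B| \ge N-T$, not from a net over values of $a_{N-k-1}$ as you suggest.
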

Prop. \ref{continuity} requires a couple of auxiliary results and follows by combining Lemma \ref{indu-M} and Corollary \ref{ct-M} below. We start with the following observation.
\begin{lemma}For any disjoint $A, B\subset \{1,\dots, N\}$ with $|A \cup B| = k$, we have that
\label{indu-M}
\begin{equation*}
    \sum_{i\not \in A\cup B} \frac{\Ex{\sigma_i;f}_{[A,B]}^2}{1 - (m^{[A,B]}_i)^2} \leq || (\Lambda^{[A,B]})^{1/2} M^{[A,B]}(\Lambda^{[A,B]})^{1/2}||\langle f; f \rangle_{[A,B]}\,,
\end{equation*}
where $\Lambda^{[A,B]} = \big(\delta_{ij}\big[ 1 - (m_i^{[A,B]})^2\big]^{-1}\big)_{i,j \not \in A \cup B}$ and $M^{[A,B]} = (m_{ij}^{[A,B]})_{i,j \not \in A \cup B}$.
As a result, $a_{[A,B]}$ is bounded by
  \begin{equation}
\label{eq:indu-M}
  \Big(1 -  \frac{|| (\Lambda^{[A,B]})^{1/2} M^{[A,B]}(\Lambda^{[A,B]})^{1/2}||}{N - k}\Big)a_{[A,B]}\leq a_{N-k-1}\,.
\end{equation}
\end{lemma}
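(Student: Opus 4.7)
\textbf{Proof plan for Lemma \ref{indu-M}.}

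The first inequality is a quadratic-form identity combined with Cauchy--Schwarz. I would set $w_i := \langle \sigma_i; f\rangle_{[A,B]}$ for $i\not\in A\cup B$, so that the left-hand side equals $w^T \Lambda^{[A,B]} w$. For any test vector $c = (c_i)_{i\not\in A\cup B}$, introduce the linear function $g_c(\sigma) := \sum_{i\not\in A\cup B} c_i (\sigma_i - m_i^{[A,B]})$. By bilinearity of the covariance, $\langle g_c; f\rangle_{[A,B]} = c^T w$, while $\langle g_c; g_c\rangle_{[A,B]} = c^T M^{[A,B]} c$. Cauchy--Schwarz in $L^2(d\langle\cdot\rangle_{[A,B]})$ then yields $(c^T w)^2 \leq (c^T M^{[A,B]} c)\,\langle f; f\rangle_{[A,B]}$.

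I would then make the specific choice $c = \Lambda^{[A,B]} w$, which turns the left-hand side into $(w^T \Lambda^{[A,B]} w)^2$ and the right-hand side into
\[
  (\Lambda^{[A,B]} w)^T M^{[A,B]} (\Lambda^{[A,B]} w) \cdot \langle f;f\rangle_{[A,B]} \;\leq\; \|(\Lambda^{[A,B]})^{1/2} M^{[A,B]} (\Lambda^{[A,B]})^{1/2}\|\cdot w^T \Lambda^{[A,B]} w \cdot \langle f;f\rangle_{[A,B]},
\]
where the last step uses that $(\Lambda^{[A,B]} w)^T M^{[A,B]} (\Lambda^{[A,B]} w) = \|(\Lambda^{[A,B]})^{1/2} w\|_2 \cdot$ (operator-norm bound applied to the similarity-transformed matrix). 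Dividing by $w^T \Lambda^{[A,B]} w$ (or noting the inequality is trivial when this vanishes) gives the first claim.

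For the second claim I would insert the bound just proved into the Conditioning Lemma \ref{lem:Condition}, obtaining
\[
  \langle f;f\rangle_{[A,B]} \;\leq\; \Big(1-\frac{1}{N-k}\Big) a_{N-k-1} D_{[A,B]}(f) + \frac{\|(\Lambda^{[A,B]})^{1/2} M^{[A,B]} (\Lambda^{[A,B]})^{1/2}\|}{N-k} \langle f;f\rangle_{[A,B]}.
\]
Moving the second term to the left-hand side, bounding $1 - 1/(N-k) \leq 1$, and taking the supremum over nonconstant $f$ in the definition \eqref{eq:def-aAB} of $a_{[A,B]}$ yields \eqref{eq:indu-M}.

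There is no real obstacle here; the only thing to be careful about is the dual choice $c = \Lambda^{[A,B]} w$, which turns the Cauchy--Schwarz inequality applied to an arbitrary linear combination of centred spins into precisely the weighted sum appearing on the left-hand side. This is essentially an observation that the sum in question is the squared norm of $w$ in the metric dual to the one defined by $M^{[A,B]}$ relative to $\Lambda^{[A,B]}$.
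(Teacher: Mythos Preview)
Your argument is correct, and the second claim is handled identically to the paper. For the first inequality, however, your route differs from the paper's. The paper decomposes $f = \sum_{i\not\in A\cup B} c_i \sigma_i + h$ with $\langle h;\sigma_i\rangle_{[A,B]}=0$, so that $\langle \sigma_i;f\rangle_{[A,B]} = (M^{[A,B]}c)_i$ and the left-hand side becomes $\|(\Lambda^{[A,B]})^{1/2} M^{[A,B]} c\|_2^2$; it then bounds this by $\|(M^{[A,B]})^{1/2}\Lambda^{[A,B]}(M^{[A,B]})^{1/2}\|\,c^T M^{[A,B]} c$, uses $c^T M^{[A,B]} c \le \langle f;f\rangle_{[A,B]}$ from the orthogonal decomposition, and finally invokes the spectral identity $\|(M^{[A,B]})^{1/2}\Lambda^{[A,B]}(M^{[A,B]})^{1/2}\| = \|(\Lambda^{[A,B]})^{1/2} M^{[A,B]}(\Lambda^{[A,B]})^{1/2}\|$. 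Your approach avoids both the decomposition of $f$ and this spectral identity: Cauchy--Schwarz applied to $\langle g_c;f\rangle_{[A,B]}$ with the dual choice $c=\Lambda^{[A,B]}w$ lands directly on the desired bound after one operator-norm estimate. Your argument is slightly shorter and does not need $M^{[A,B]}$ to have a square root at any intermediate step; the paper's version, on the other hand, makes explicit the connection between the error term and the linear part of $f$, which is the heuristic motivating the whole section.
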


\begin{proof}
We write $f = \sum_{i  \not \in A \cup B} c_i \sigma_i  + h $ with $  \langle h;\sigma_i \rangle_{[A,B]} = 0$ for all $i  \not \in A \cup B$. Notice that, up to a constant, $h$ is the projection of $f$ onto the orthogonal complement of the functions $\sigma\mapsto \sigma_i - m_i^{[A,B]} $ in $L^2(d \langle\cdot\rangle_{[A,B]})$. Then

\begin{equation}
\label{eq:f=csigma+h}
        \begin{aligned}
    \sum_{i\not \in A\cup B} \frac{\Ex{\sigma_i;f}_{[A,B]}^2}{1 - (m^{[A,B]}_i)^2}  & =\sum_{i\not \in A\cup B}\frac{( \sum_j m^{[A,B]}_{ij}c_j )^2}{1 -( m^{[A,B]}_i)^2} =  \| (\Lambda^{[A,B]})^{1/2} M^{[A,B]} \mathbf c\|_2^2  \\
    &\leq ||(M^{[A,B]})^{1/2} \Lambda^{[A,B]} (M^{[A,B]})^{1/2} ||  \langle \mathbf c, M^{[A,B]} \mathbf c \rangle_2 \\
    & \leq ||(M^{[A,B]})^{1/2} \Lambda^{[A,B]} (M^{[A,B]})^{1/2}  ||\langle f; f \rangle_{[A,B]}, \end{aligned}
\end{equation}
where from now on $ \langle\cdot, \cdot\rangle_2 $ denotes the standard Euclidean inner product, $\|\cdot\|_2$ is its induced norm and $\|\cdot\|$ denotes the matrix norm, i.e. the maximal eigenvalue in case of a symmetric, positive semi-definite matrix like $(M^{[A,B]})^{1/2} \Lambda^{[A,B]} (M^{[A,B]})^{1/2} $. Noting that $||(M^{[A,B]})^{1/2} \Lambda^{[A,B]} (M^{[A,B]})^{1/2}  || = || (\Lambda^{[A,B]})^{1/2} M^{[A,B]}(\Lambda^{[A,B]})^{1/2}||$, we conclude the claim using Lemma \ref{lem:Condition} and the definition of $a_{[A,B]}$ in \eqref{eq:def-aAB}.
\end{proof} 

To proceed further, we need some a priori information on the distribution of the cavity fields $ B_j^{[A,B]}$. In essence, the next result allows us to control the Gibbs expectation of exponentials of $ B_j^{[A,B]}$ by the exponentials evaluated at the Gibbs expectation of $ B_j^{[A,B]}$. To focus on the main line of the argument for the proof of Theorem \ref{thm:main}, we defer the proof of the following technical key lemma to Section \ref{sec:est}. 

\begin{lemma}\label{bd-m}
Let $\beta$ be as in \eqref{eq:defbeta}, $\Omega$ as in \eqref{eq:def-omg}, $\epsilon \in (0,10^{-2})$, $A, B\subset \{1,\dots, N\}$ disjoint with $|A \cup B| = k$ and let $j\not \in A\cup B$. Assume that $\beta$ is sufficiently small. If $ a_{[A,B\cup \{j\}]}  C_\beta<\epsilon$, then we have in $\Omega$ that
        \begin{align}
        \label{eq:bd-m-1}
        &\frac{1}{ \big\langle \cosh^2(B^{[A,B]}_j)\big\rangle_{[A,B\cup\{j\}]}} \leq  (1+4\epsilon) \big( 1 - (m^{[A,B]}_j)^2\big),\\
             \label{eq:bd-m-3}
         & \big\langle e^{-K B^{[A,B]}_j}\big\rangle_{[A,B\cup\{j\}  ]} \leq (1 + 4\epsilon) e^{-K\big\langle B^{[A,B]}_j\big\rangle_{[A,B\cup\{j\} ]}} , \,\\
               \label{eq:bd-m-2}
        &1 \leq  \frac{\big\langle\cosh(KB^{[A,B]}_j)\big\rangle_{[A,B\cup\{j\}]}}{\cosh\big(K\big\langle{B^{[A,B]}_j}\big\rangle_{[A,B\cup\{j\}]}\big)} \leq 1 + 4\epsilon
               \end{align}
uniformly in $ K\in [-20, 20]$.
\end{lemma}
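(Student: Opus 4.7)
My plan is to reduce all three bounds to a single concentration statement: under the measure $\langle\cdot\rangle_{[A,B\cup\{j\}]}$, the cavity field $B_j^{[A,B]}$ concentrates sharply around its conditional mean $\mu:=\langle B_j^{[A,B]}\rangle_{[A,B\cup\{j\}]}$. The point is that $B_j^{[A,B]}$ is a function only of $\sigma_{(A\cup B\cup\{j\})^c}$, hence a legitimate test function for the $[A,B\cup\{j\}]$-subsystem, and given sufficiently strong concentration all three statements follow by standard manipulations together with the cavity identity linking $[A,B]$ and $[A,B\cup\{j\}]$.

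The first step is a variance estimate via Poincaré:
\begin{equation*}
\langle B_j^{[A,B]};B_j^{[A,B]}\rangle_{[A,B\cup\{j\}]}\leq a_{[A,B\cup\{j\}]}\,D_{[A,B\cup\{j\}]}(B_j^{[A,B]}).
\end{equation*}
On $\Omega$ the $j$-th column of the derivative matrix obeys $\sum_i(\partial_i B_j^{[A,B]})^2\leq\|(\partial_i B_k^{[A,B]})\|^2\leq 25\beta^2$, and since $\cosh^{-2}\leq 1$ this gives $D_{[A,B\cup\{j\}]}(B_j^{[A,B]})\leq 25\beta^2$. Combined with $a_{[A,B\cup\{j\}]}C_\beta<\epsilon$ and the particular form $C_\beta=(10^3\beta)^2 e^{10^3\beta}$, the variance is of order $\epsilon/((10^3)^2 e^{10^3\beta})$, far smaller than $\epsilon$.

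To upgrade to exponential-moment control, I would iterate Poincaré on the powers $X^n$ with $X:=B_j^{[A,B]}-\mu$. The telescoping identity $\partial_i f^n=\tfrac12(f^n-(f^{(i)})^n)$ together with the pointwise entrywise bound $|\partial_i B_j^{[A,B]}|\leq 5\beta$ on $\Omega$ (a consequence of the operator-norm bound) gives $|\partial_i X^n|\leq 5n\beta\max(|X|,|X^{(i)}|)^{n-1}$, and a standard induction yields Gaussian-type moment bounds $\langle X^{2n}\rangle\leq(C n^2\beta^2 a_{[A,B\cup\{j\}]})^n$. Summing the Taylor series for $|K|\leq 20$ produces
\begin{equation*}
\langle e^{KX}\rangle_{[A,B\cup\{j\}]}\leq 1+C\,K^2\beta^2 a_{[A,B\cup\{j\}]}\,e^{CK\beta}\leq 1+4\epsilon,
\end{equation*}
the factor $e^{CK\beta}$ (with $|K|\leq 20$) being precisely what necessitates the $e^{10^3\beta}$ in $C_\beta$. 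From here, (2) follows by factoring $\langle e^{-KB_j^{[A,B]}}\rangle=e^{-K\mu}\langle e^{-KX}\rangle$. The lower bound in (3) is Jensen's inequality for $\cosh$; the upper bound decomposes $\cosh(KB_j^{[A,B]})=\tfrac12(e^{KB_j^{[A,B]}}+e^{-KB_j^{[A,B]}})$, similarly $\cosh(K\mu)$, and applies (2) to each piece. For (1), the cavity identity $m_j^{[A,B]}=\langle\sinh B_j^{[A,B]}\rangle_{[A,B\cup\{j\}]}/\langle\cosh B_j^{[A,B]}\rangle_{[A,B\cup\{j\}]}$ gives $1-(m_j^{[A,B]})^2=\langle e^{B_j^{[A,B]}}\rangle_{[A,B\cup\{j\}]}\langle e^{-B_j^{[A,B]}}\rangle_{[A,B\cup\{j\}]}/\langle\cosh B_j^{[A,B]}\rangle^2_{[A,B\cup\{j\}]}$, and the two Cauchy--Schwarz inequalities $\langle\cosh^2 B\rangle\geq\langle\cosh B\rangle^2$ and $\langle e^B\rangle\langle e^{-B}\rangle\geq 1$ already produce (1) even without the $(1+4\epsilon)$ slack.

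The principal obstacle is Step 2: a spectral gap alone ordinarily yields only variance bounds, and passing to genuine exponential concentration usually requires a log-Sobolev inequality. The substitute here is the pointwise boundedness of the discrete derivatives of $B_j^{[A,B]}$ on $\Omega$ (from the operator-norm bound), fed into an iterated Poincaré recursion on polynomial moments. Tracking the numerical constants carefully through this recursion is exactly what forces the precise form of $C_\beta=(10^3\beta)^2 e^{10^3\beta}$.
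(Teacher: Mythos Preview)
Your overall strategy is correct and largely matches the paper's: everything reduces to the single estimate $\langle e^{KX}\rangle_{[A,B\cup\{j\}]}\le 1+4\epsilon$ for $X=B_j^{[A,B]}-\mu$, and your derivations of \eqref{eq:bd-m-3}, \eqref{eq:bd-m-2} from this are exactly right. Your argument for \eqref{eq:bd-m-1} is in fact \emph{sharper} than the paper's: from the identity $1-(m_j^{[A,B]})^2=\langle e^{B}\rangle_*\langle e^{-B}\rangle_*/\langle\cosh B\rangle_*^2$ together with $\langle\cosh^2 B\rangle_*\ge\langle\cosh B\rangle_*^2$ and $\langle e^B\rangle_*\langle e^{-B}\rangle_*\ge1$ you obtain \eqref{eq:bd-m-1} without the factor $(1+4\epsilon)$, whereas the paper routes \eqref{eq:bd-m-1} through Jensen on $(1-\tanh^2)^{-1}$ and then \eqref{eq:bd-m-2}.

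Where your route diverges from the paper is the concentration step, and here there is a genuine soft spot. You propose to iterate Poincar\'e on the powers $X^n$ and claim ``a standard induction yields $\langle X^{2n}\rangle\le(Cn^2\beta^2 a)^n$''. In the discrete setting this recursion is not clean: from $\partial_i X^n=\partial_i X\cdot\sum_{k=0}^{n-1}X^k(X^{(i)})^{n-1-k}$ one gets $\sum_i(\partial_i X^n)^2\le 25n^2\beta^2(|X|+10\beta)^{2(n-1)}$, and the $+10\beta$ shift (together with the residual $\langle X^n\rangle^2$ term) introduces combinatorial factors that, if handled crudely, blow up the recursion. This can be salvaged, but it is not the one-line ``standard induction'' you invoke. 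The paper sidesteps the issue entirely: it applies Poincar\'e directly to $f=e^{KB_j}$, uses the multiplicative identity $(\partial_i f)^2=\tfrac14 f^2(1-e^{-2K\partial_i B_j})^2$ together with $\sum_i(1-e^{-2K\partial_i B_j})^2\le C_{K,\beta}$ (from Lemma~\ref{lem:Xbound}) to get $\langle f^2\rangle\le(1-aC_{K,\beta})^{-1}\langle f\rangle^2$, and then iterates by \emph{halving} $K$ (Bobkov--Ledoux style). Since $C_{2^{-l}K,\beta}\sim 4^{-l}C_{K,\beta}$, the infinite product $\prod_l(1-aC_{2^{-l}K,\beta})^{-2^l}$ converges, and $\lim_l\langle e^{2^{-l}KB_j}\rangle^{2^l}=e^{K\mu}$ yields the result. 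This dyadic scheme is what buys the clean bound and is the reason $C_\beta$ has its particular exponential form. Your remark that ``spectral gap alone ordinarily yields only variance bounds'' is also slightly off: Poincar\'e plus bounded gradients gives sub-exponential (not sub-Gaussian) concentration, and the dyadic argument is precisely the standard proof of that fact.
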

 
We use Lemma \ref{bd-m} to prove the following result.  
  
\begin{lemma}
\label{ct-sg} 
Let $\Omega$ be as in \eqref{eq:def-omg}, $C_\beta$ as in \eqref{eq:def-cbeta}, $\epsilon \in (0,10^{-2})$ and set $$ \Omega_{N-k-1,\epsilon} = \{ a_{N-k-1}  C_\beta < \epsilon\}.$$ Then, there exists a universal constant $C>0$ such that in the event $\Omega\cap \Omega_{N-k-1,\epsilon}$, we have for every disjoint $A,B$ with $|A\cup B| = k$, $j \not \in A \cup B$, every $\sigma_A\in \Sigma_{|A|}$ and every function $g:\Sigma_{N-k} \to \mathbb{R} $ that is independent of $\sigma_j$ that 
        \[ \Ex{\sigma_j;g}_{[A,B]}^2 \leq  \frac{C\epsilon}{C_\beta} \big(1 - (m^{[A,B]}_j)^2\big)^2 D_{[A,B\cup\{j\}}(g)\,.\]
  
\end{lemma}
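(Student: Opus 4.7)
The plan is to integrate out $\sigma_j$ from $\langle\cdot\rangle_{[A,B]}$ to recast $\langle\sigma_j;g\rangle_{[A,B]}$ as a covariance under $\langle\cdot\rangle_{[A,B\cup\{j\}]}$, after which the inductive spectral-gap hypothesis $a_{[A,B\cup\{j\}]} \le \epsilon/C_\beta$ together with Lemma~\ref{bd-m} will control the remaining estimates. Writing $H_N^{[A,B]} = B_j^{[A,B]}\sigma_j + H_N^{[A,B\cup\{j\}]}$ and summing over $\sigma_j\in\{-1,+1\}$ in the numerator and denominator of $\langle \sigma_j g\rangle_{[A,B]}$ and $\langle g\rangle_{[A,B]}$ (with $g$ independent of $\sigma_j$) yields
\[
\langle \sigma_j;g\rangle_{[A,B]} \;=\; \frac{\langle h;g\rangle_{[A,B\cup\{j\}]}}{Z},\qquad h:=\sinh(B_j^{[A,B]})-m_j^{[A,B]}\cosh(B_j^{[A,B]}),
\]
with $Z:=\langle \cosh(B_j^{[A,B]})\rangle_{[A,B\cup\{j\}]}$. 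The same reduction identifies $m_j^{[A,B]} = \langle \sinh(B_j^{[A,B]})\rangle_{[A,B\cup\{j\}]}/Z$, so $\langle h\rangle_{[A,B\cup\{j\}]} = 0$, and Cauchy--Schwarz gives $|\langle \sigma_j;g\rangle_{[A,B]}|^2 \le (\langle h;h\rangle_{[A,B\cup\{j\}]}/Z^2)\langle g;g\rangle_{[A,B\cup\{j\}]}$.

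I would then perform a shift trick to produce the $(1-(m_j^{[A,B]})^2)^2$ factor required by the statement. Setting $c:=\operatorname{arctanh}(m_j^{[A,B]})$, the relations $\cosh c = (1-(m_j^{[A,B]})^2)^{-1/2}$ and $\sinh c = m_j^{[A,B]}\cosh c$ give $h = \sqrt{1-(m_j^{[A,B]})^2}\,\sinh(B_j^{[A,B]}-c)$; since $\langle \sinh(B_j^{[A,B]}-c)\rangle_{[A,B\cup\{j\}]}=0$, the identity $\cosh B_j^{[A,B]} = \cosh c\,\cosh(B_j^{[A,B]}-c) + \sinh c\,\sinh(B_j^{[A,B]}-c)$ also gives $Z = \langle \cosh(B_j^{[A,B]}-c)\rangle_{[A,B\cup\{j\}]}/\sqrt{1-(m_j^{[A,B]})^2}$. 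Consequently,
\[
\frac{\langle h;h\rangle_{[A,B\cup\{j\}]}}{Z^2} \;=\; (1-(m_j^{[A,B]})^2)^2 \,\frac{\langle \sinh^2(B_j^{[A,B]}-c)\rangle_{[A,B\cup\{j\}]}}{\langle \cosh(B_j^{[A,B]}-c)\rangle_{[A,B\cup\{j\}]}^2}.
\]

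For the residual ratio I would apply the spectral gap to the mean-zero function $\sinh(B_j^{[A,B]}-c)$. On $\Omega$, each row of $(\partial_i B_j^{[A,B]})$ has Euclidean norm at most $5\bn$, so $\sum_l (\partial_l B_j^{[A,B]})^2 \le 25\bn^2$; combining this with $|\partial_l B_j^{[A,B]}|\le 5\bn$ and the Lipschitz estimate $|\sinh x-\sinh y|\le |x-y|\cosh(\max(|x|,|y|))$ yields $D_{[A,B\cup\{j\}]}(\sinh(B_j^{[A,B]}-c)) \le 25\bn^2 e^{20\bn}\langle \cosh^2(B_j^{[A,B]}-c)\rangle_{[A,B\cup\{j\}]}$. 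The quotient $\langle \cosh^2(B_j^{[A,B]}-c)\rangle/\langle \cosh(B_j^{[A,B]}-c)\rangle^2 \le 1+4\epsilon$ follows by expanding $\cosh(2(B_j^{[A,B]}-c))$ and $\cosh(B_j^{[A,B]}-c)$ as linear combinations of $e^{\pm KB_j^{[A,B]}}$ with $K\in\{\pm 1,\pm 2\}\subset[-20,20]$ and invoking \eqref{eq:bd-m-3} together with Jensen's inequality. Plugging these into the display above, then using $\langle g;g\rangle_{[A,B\cup\{j\}]}\le a_{[A,B\cup\{j\}]} D_{[A,B\cup\{j\}]}(g)$ and $a_{[A,B\cup\{j\}]}\le \epsilon/C_\beta$, one obtains an overall prefactor of order $\bn^2 e^{20\bn}(\epsilon/C_\beta)^2$, which the explicit form $C_\beta = (10^3\bn)^2 e^{10^3\bn}$ absorbs into $C\epsilon/C_\beta$. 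The main obstacle is the shift step: without centering by $c$, a direct Cauchy--Schwarz on $\langle h^2\rangle$ would lose the $(1-(m_j^{[A,B]})^2)^2$ factor and the resulting bound would be useless whenever some $|m_j^{[A,B]}|$ is close to $1$; once that cancellation is extracted, the rest is a standard bookkeeping combination of the $\Omega$-bound, the inductive spectral gap, and the concentration estimates of Lemma~\ref{bd-m}.
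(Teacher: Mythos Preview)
Your argument is correct and takes a genuinely different route from the paper's proof. Both start identically: integrate out $\sigma_j$ to write $\langle\sigma_j;g\rangle_{[A,B]}=\langle h;g\rangle_*/Z$ with $h=\sinh(B_j^{[A,B]})-m_j^{[A,B]}\cosh(B_j^{[A,B]})$, observe $\langle h\rangle_*=0$, and apply Cauchy--Schwarz. From there the paper bounds $\langle h^2\rangle_*/Z^2$ by a further Cauchy--Schwarz against $\langle(\tanh B_j^{[A,B]}-m_j^{[A,B]})^4\rangle_*^{1/2}$ and then proves the key fourth-moment estimate $\langle(\tanh B_j^{[A,B]}-m_j^{[A,B]})^4\rangle_*\le C(1-(m_j^{[A,B]})^2)^4$ via a case split on the sign of $\langle B_j^{[A,B]}\rangle_*$ together with exponential tail bounds from Lemma~\ref{bd-m}. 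Your hyperbolic shift $c=\operatorname{arctanh}(m_j^{[A,B]})$ instead extracts the factor $(1-(m_j^{[A,B]})^2)^2$ \emph{algebraically} through the identities $h=\sqrt{1-(m_j^{[A,B]})^2}\,\sinh(B_j^{[A,B]}-c)$ and $Z=\langle\cosh(B_j^{[A,B]}-c)\rangle_*/\sqrt{1-(m_j^{[A,B]})^2}$, after which a second application of the spectral gap (to the mean-zero function $\sinh(B_j^{[A,B]}-c)$) together with the pointwise column bound $\sum_l(\partial_l B_j^{[A,B]})^2\le(5\beta)^2$ on $\Omega$ finishes the job. The trade-off is clean: the paper uses the spectral gap once and pays with a delicate case analysis; you use it twice, picking up an extra factor $a_{[A,B\cup\{j\}]}\cdot\beta^2 e^{C\beta}$ that is harmlessly absorbed by $C_\beta$, but you avoid the fourth-moment computation entirely. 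Your route is arguably more transparent and would also adapt more easily if one wanted sharper constants.
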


\begin{proof}
Assume in the following that $g:\Sigma_{N-k} \to \mathbb{R} $ is a function that is independent of $\sigma_j$. Then, writing $H_{N}^{[A,B]}= H_{N}^{[A,B \cup\{j\}]} + \sigma_j B_j^{[A,B]}$, we have that
        $$\begin{aligned}
        \Ex{\sigma_j;g}_{[A,B]} = &\frac{\Ex{ \sum_{\sigma_j\in \{\pm1\}} \sigma_j g e^{\sigma_j B_j^{[A,B]}}}_{[A,B\cup\{j\}]} }{ \Ex{2\cosh(B_j^{[A,B]})}_{[A,B\cup\{j\}]}}     \\&- \frac{\Ex{\sum_{\sigma_j\in \{\pm 1\}}\sigma_j  e^{\sigma_j B_j^{[A,B]}}}_{[A,B\cup\{j\}]}}{ \Ex{2\cosh(B_j^{[A,B]})}_{[A,B\cup\{j\}]} }\frac{   \Ex{\sum_{\sigma_j\in \{\pm1\}}g e^{\sigma_j B_j^{[A,B]}}}_{[A,B\cup\{j\}]}}{\Ex{2\cosh(B_j^{[A,B]})}_{[A,B\cup\{j\}]}}\,.
        \end{aligned}$$
To simplify the notation, let us abbreviate for the rest of the proof
  \[\Ex{\cdot}_*=\Ex{\cdot}_{[A,B\cup\{j\}]},\quad\xi = \frac{\sinh(B_j^{[A,B]})}{\big\langle\cosh(B_j^{[A,B]})\big\rangle_{[A,B\cup\{j\}]}},\quad \zeta  = \frac{\cosh(B_j^{[A,B]})}{\big\langle\cosh(B_j^{[A,B]})\big\rangle_{[A,B\cup\{j\}]}},\]
so that, by the independence of $g$ of $\sigma_j$, we find
 \begin{align*} 
   \Ex{\sigma_j;g}_{[A,B]} 
    = \Ex{\xi g}_{*} - \Ex{\xi}_{*}\Ex{g \zeta}_{*}
    &=\Ex{\xi;g}_{*} - \Ex{\xi}_{*}\Ex{g;\zeta}_{*}=\big\langle\big(\xi - \Ex{\xi}_{*}\zeta\big) ;g\big\rangle_{*}.
 \end{align*}
 Notice that in the second step, we used $ \langle\zeta\rangle_* =1$. Observing also that
        \[ \Ex{\xi}_{*} = \frac{\big\langle\sinh(B_j^{[A,B]})\big\rangle_{[A,B\cup\{j\}]}}{\big\langle\cosh(B_j^{[A,B]})\big\rangle_{[A,B\cup\{j\}]}} = \langle \sigma_j\rangle_{[A,B]} = m_j^{[A, B]}, \]
 we apply Cauchy-Schwarz and obtain the upper bound
    \begin{equation}
    \label{eq:aux1}
    \begin{split}
    \Ex{\sigma_j;g}_{[A,B]}^2 &\leq \Ex{g;g}_{*} \Ex{\xi - m^{[A,B]}_j \zeta;\xi - m^{[A,B]}_j \zeta}_{*} \\
    &\leq a_{N-k-1} D_{[A,B\cup\{j\}]}(g) \Ex{(\xi - m^{[A,B]}_j \zeta)^2}_{*}\,.
    \end{split}
    \end{equation}
In $\Omega_{N-k-1}$, we know that $a_{N-k-1} C_\beta<\epsilon$, so the claim follows if we show that the last factor on the r.h.s. of the previous estimate is bounded by some constant $C>0$ that is independent of all parameters. To this end, we bound
        \begin{equation*}
        \begin{split}
        &\Ex{(\xi - m^{[A,B]}_j \zeta)^2}_{*}\\
        &= \Ex{\cosh(B_j^{[A,B]})}_{*}^{-2}\Ex{\cosh^2(B_j^{[A,B]})\big(\tanh(B_j^{[A,B]}) - m^{[A,B]}_j\big)^2}_{*}\\
        & \leq \Ex{\cosh(B_j^{[A,B]})}_{*}^{-2} \Ex{\cosh^4(B_j^{[A,B]})}_{*}^{1/2}   \Ex{\big(\tanh(B_j^{[A,B]}) - m^{[A,B]}_j\big)^4}_{*}^{1/2}\,.
        \end{split}
        \end{equation*}
In $\Omega\cap\Omega_{N-k-1,\epsilon}$ we can apply Lemma \ref{bd-m}, because $a_{N-k-1} C_\beta<\epsilon$ implies in particular that $ a_{[A,B\cup \{j\}]} C_\beta<\epsilon$. Expanding $\cosh(x) = (e^x+e^{-x})/2 $, this yields 
    \begin{equation}\label{eq:aux3}\Ex{\cosh(B_j^{[A,B]})}_{*}^{-2} \Ex{\cosh^4(B_j^{[A,B]})}_{*}^{1/2} \leq C\end{equation}
and we also claim that
\begin{equation}
\label{eq:uabd}
    \Ex{\big(\tanh(B_j^{[A,B]}) - m^{[A,B]}_j\big)^4}_{*} \leq C \big(1 -(m_j^{[A,B]})^2\big)^4\,.
\end{equation}
Assuming the validity of \eqref{eq:uabd} for the moment, the lemma follows by combining the previous four estimates. Hence, let us focus on the proof of \eqref{eq:uabd}. As explained below, this bound follows by giving a uniform (in $N$) lower bound on $1 + m^{[A,B]}_j$ or $1 - m^{[A,B]}_j$, depending on whether $\langle B_j^{[A,B]}\rangle_*\geq 0$ or $\langle B_j^{[A,B]}\rangle_*\leq 0$. To see this, let us assume first that $\langle B_j^{[A,B]}\rangle_* \ge 0$. Then, Lemma \ref{bd-m} implies
 
    \begin{equation*} \label{eq:probabilitybnd}
    \begin{split}
    \big\langle\mathbf 1\big\{B_j^{[A,B]}\leq - 1/2\big\}\big\rangle_* &\le \exp (- K/2 ) \big\langle\exp\big(- K    B_j^{[A,B]}\big)\big\rangle_*\\ &\le (1+\epsilon) \exp (- K/2 )\exp\big(- K   \langle B_j^{[A,B]}\rangle_*\big) \le 2\exp(-K/2).
    \end{split}
    \end{equation*}
for every $0\leq K\leq 20$. Choosing $0\leq K\leq 20$ suitably, we get
    \begin{equation*} \label{eq:mxlwrbnd}
    \begin{split}
    1 + m^{[A,B]}_j  &= 1 + \big\langle\mathrm{tanh}\big(B_j^{[A,B]}\big)\big\rangle_{[A,B]}\\
    &\ge 1 + \mathrm{tanh}\big(-1/2\big) -  \frac{ \langle \sinh^2(B_j^{[A,B]})\rangle_*^{1/2}}{\langle \cosh(B_j^{[A,B]})\rangle_*} \big\langle\mathbf 1\big\{B_j^{[A,B]}\leq - 1/2\big\}\big\rangle_*^{1/2}\\
    &\geq 1 - \mathrm{tanh}\big(1/2\big) -2\exp(-K/4)\geq 1/(1+e) >0.
    \end{split}
    \end{equation*}
Using this lower bound, the elementary bounds $ 0\leq 1-\tanh(x)\leq 2e^{-2x}$ and $\cosh^2(x)\leq \cosh(2x)$ for all $x\in\mathbb{R}$ and once again Lemma \ref{bd-m}, we find that
        \begin{align*}
        \big\langle\big(\tanh(B_j^{[A,B]}) - m^{[A,B]}_j\big)^4\big\rangle_{*}  \leq & \;C \big\langle\big(1-\tanh(B_j^{[A,B]}) \big)^4\big\rangle_{*} + C\big (1 - m_j^{[A,B]}\big)^4\\
        \leq &\; C \big\langle e^{-8 B_j^{[A,B]}} \big\rangle_* + C \big(1 -(m_j^{[A,B]})^2\big)^4\\ 
        \leq &\; C e^{-8  \langle B_j^{[A,B]} \rangle_*} + C \big(1 -(m_j^{[A,B]})^2\big)^4
        \end{align*}
as well as
\begin{equation*}
   e^{-2 \Ex{B_j}_*} \leq \frac12 \cosh\big(2\langle B_j^{[A,B]}\rangle_*\big)^{-1} \leq C \big\langle\cosh\big(2B_j^{[A,B]}\big)\big\rangle_{[A,B\cup \{j\}]}^{-1} \leq C\big(1 -(m_j^{[A,B]})^2\big),
\end{equation*}
which, combined with the previous bound, implies \eqref{eq:uabd} for $\langle B_j^{[A,B]}\rangle_* \ge 0$. 

If $\langle B_j^{[A,B]}\rangle_* < 0$, we proceed similarly as above and the bound \eqref{eq:uabd} follows from 
 estimating $ \langle\mathbf 1\{B_j^{[A,B]}\geq 1/2\}\rangle_* \leq 2 \exp(-K/2)$ so that $ 1 - m^{[A,B]}_j \geq 1/(1+e)$ by choosing a suitable $0\leq K\leq 20$, and combining this with
        \begin{align*}
        \big\langle\big(\tanh(B_j^{[A,B]}) - m^{[A,B]}_j\big)^4\big\rangle_{*}  \leq & \;C \big\langle\big(1+\tanh(B_j^{[A,B]}) \big)^4\big\rangle_{*} +  C \big(1 + m_j^{[A,B]}\big)^4\\
        \leq &\; C e^{8  \langle B_j^{[A,B]} \rangle_*} + C \big(1 -(m_j^{[A,B]})^2\big)^4\\
        \leq &\; C \cosh\big(2\langle B_j^{[A,B]}\rangle_*\big)^{-4} + C \big(1 -(m_j^{[A,B]})^2\big)^4\\
        \leq &\;C \big(1 -(m_j^{[A,B]})^2\big)^4.
        \end{align*}
\end{proof}
\begin{cor}\label{ct-M}
Let $\Omega$ be as in \eqref{eq:def-omg}, $C_\beta$ as in \eqref{eq:def-cbeta}, $\epsilon\in (0,10^{-2})$ and set 
        $$ \Omega_{N-k-1,\epsilon} = \{ a_{N-k-1}  C_\beta < \epsilon\}.$$ 
Then, there exists a universal constant $C>0$, s.t. in $\Omega\cap \Omega_{N-k-1,\epsilon}$, we have for every $A,B$ be disjoint with $|A\cup B| = k<N$ and every $\sigma_A\in \Sigma_{|A|} $ that
            \[ \| (\Lambda^{[A,B]})^{1/2} M^{[A,B]}(\Lambda^{[A,B]})^{1/2}\| \leq  C\sqrt{ \frac{ \epsilon }{ C_\beta }} \sqrt{N - k}.\]
        Here, we recall that $\Lambda^{[A,B]}=\Lambda^{[A,B]}(\sigma_A) = \big(\delta_{ij}\big[ 1 - (m_i^{[A,B]})^2\big]^{-1}\big)_{i,j \not \in A \cup B}$ as well as $M^{[A,B]}= M^{[A,B]}(\sigma_A)= (m_{ij}^{[A,B]})_{i,j \not \in A \cup B}$. 
\end{cor}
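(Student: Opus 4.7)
The plan is to bound $\|K\|_{\mathrm{op}}$, with $K := (\Lambda^{[A,B]})^{1/2}M^{[A,B]}(\Lambda^{[A,B]})^{1/2}$, via the variational formula
\begin{equation*}
  \|K\|_{\mathrm{op}} \;=\; \sup_{u \neq 0} \frac{\langle f_u;f_u\rangle_{[A,B]}}{\sum_i u_i^2 (1-m_i^2)}, \qquad f_u := \sum_{i \not\in A\cup B} u_i \sigma_i,
\end{equation*}
obtained from $\sup_c \langle c,Kc\rangle/\|c\|^2$ by the substitution $c_i = u_i\sqrt{1-m_i^2}$. Since $\langle f_u;f_u\rangle = \sum_i u_i^2(1-m_i^2) + \sum_{i\neq j}u_i u_j\, m_{ij}^{[A,B]}$, the claim reduces to the off-diagonal bound
\begin{equation*}
  \Bigl|\sum_{i\neq j} u_i u_j\, m_{ij}^{[A,B]}\Bigr| \;\leq\; C \sqrt{\epsilon(N-k)/C_\beta}\, \sum_i u_i^2(1-m_i^2),
\end{equation*}
which combined with the hypothesis $a_{N-k-1} C_\beta < \epsilon$ (and $a_{N-k-1}\geq 1$, so $\sqrt{\epsilon/C_\beta}\geq 1$) yields $\|K\| \leq 1 + C\sqrt{\epsilon(N-k)/C_\beta} \leq C'\sqrt{\epsilon(N-k)/C_\beta}$.

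To prove this bound I would rewrite $\sum_{i\neq j}u_iu_jm_{ij} = \sum_j u_j\langle\sigma_j;g_j\rangle_{[A,B]}$ with $g_j := \sum_{i\neq j}u_i\sigma_i$ (independent of $\sigma_j$) and apply Cauchy--Schwarz with weights $(1-m_j^2)$:
\begin{equation*}
  \Bigl|\sum_j u_j\langle\sigma_j;g_j\rangle\Bigr|^2 \;\leq\; \Bigl(\sum_j u_j^2(1-m_j^2)\Bigr)\Bigl(\sum_j \frac{\langle\sigma_j;g_j\rangle^2}{1-m_j^2}\Bigr).
\end{equation*}
The first factor is in the target form. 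For the second, Lemma \ref{ct-sg}, applied on $\Omega\cap\Omega_{N-k-1,\epsilon}$, gives $\sum_j \langle\sigma_j;g_j\rangle^2/(1-m_j^2) \leq (C\epsilon/C_\beta)\sum_j(1-m_j^2)\, D_{[A,B\cup\{j\}]}(g_j)$, with $D_{[A,B\cup\{j\}]}(g_j) = \sum_{l\neq j}u_l^2\,\langle\cosh^{-2}(B_l^{[A,B\cup\{j\}]})\rangle_{[A,B\cup\{j\}]}$. Granted the comparison
\begin{equation*}
  \langle\cosh^{-2}(B_l^{[A,B\cup\{j\}]})\rangle_{[A,B\cup\{j\}]} \;\leq\; C\,\bigl(1-(m_l^{[A,B]})^2\bigr) \quad\text{for all } l\neq j,
\end{equation*}
swapping the sums and using $\sum_j (1-m_j^2)\leq N-k$ produces $C(N-k)\sum_l u_l^2(1-m_l^2)$, which combined with the Cauchy--Schwarz above completes the off-diagonal estimate.

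The main obstacle is precisely this variance comparison, whose right-hand side refers to the subsystem $[A,B]$ but whose left-hand side is a Gibbs average in the subsystem $[A,B\cup\{j\}]$. I plan to establish it in two stages. First, working entirely inside the subsystem $[A,B\cup\{j\}]$, I would use the pointwise bound $\cosh^{-2}(x)\leq 4e^{-2|x|}$ together with \eqref{eq:bd-m-3} (applied at spin $l$ in $[A,B\cup\{j,l\}]$) to get $\langle\cosh^{-2}(B_l^{[A,B\cup\{j\}]})\rangle \leq C\cosh^{-2}\bigl(\langle B_l^{[A,B\cup\{j\}]}\rangle_{[A,B\cup\{j,l\}]}\bigr)$, and then invoke the cavity identity $1-(m_l^{[A,B\cup\{j\}]})^2 = 4\langle e^{B_l}\rangle\langle e^{-B_l}\rangle/\bigl(\langle e^{B_l}\rangle+\langle e^{-B_l}\rangle\bigr)^2$ together with a further use of \eqref{eq:bd-m-3} to convert this into $C\bigl(1-(m_l^{[A,B\cup\{j\}]})^2\bigr)$. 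Second, to replace $m_l^{[A,B\cup\{j\}]}$ by $m_l^{[A,B]}$, I would use that on $\Omega$ the fields satisfy $|B_l^{[A,B]}-B_l^{[A,B\cup\{j\}]}| = |\sigma_j\,\partial_j B_l^{[A,B]}|\leq 5\bn$, so that $\cosh^{\pm 2}$ of the two fields are pointwise comparable up to a factor $e^{O(\bn)}$, and combine this with the cavity reweighting $\langle\cdot\rangle_{[A,B]} = \langle\cdot\cosh(B_j^{[A,B]})\rangle_{[A,B\cup\{j\}]}/\langle\cosh(B_j^{[A,B]})\rangle_{[A,B\cup\{j\}]}$ and \eqref{eq:bd-m-2} to control the resulting $\cosh(B_j^{[A,B]})$ ratio. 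This cross-subsystem transfer is the delicate step, since $\cosh(B_j^{[A,B]})$ has no deterministic upper bound and its control relies on the full strength of Lemma \ref{bd-m} and the event $\Omega$.
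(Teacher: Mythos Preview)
Your approach is essentially the one the paper uses: write the quadratic form $\mathbf c^{\mathsf T}K\mathbf c$ as $\|c\|_2^2$ (diagonal) plus $\sum_j u_j\langle\sigma_j;g_j\rangle_{[A,B]}$ with $g_j=\sum_{i\neq j}u_i\sigma_i$, apply Lemma~\ref{ct-sg} to each $\langle\sigma_j;g_j\rangle$, and close with Cauchy--Schwarz. The paper arranges the Cauchy--Schwarz slightly differently (it bounds $\sum_j|c_j|\sqrt{1-(m_j)^2}\sqrt{D_{[A,B\cup\{j\}]}(g_j)}$ and then applies Cauchy--Schwarz to this $j$-sum), but the structure and the endpoint $C\sqrt{\epsilon(N-k)/C_\beta}\,\|c\|_2^2$ are the same.

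You are in fact \emph{more} careful than the paper on the point you flag as the obstacle. The paper, after invoking Lemma~\ref{ct-sg}, writes the resulting Dirichlet form as $\sum_{i\neq j}\tfrac{c_i^2}{1-(m_i^{[A,B]})^2}\,\langle\cosh^{-2}(B_i^{[A,B]})\rangle_{[A,B]}$ and then uses the elementary Jensen bound $\langle\cosh^{-2}(B_i^{[A,B]})\rangle_{[A,B]}=1-\langle\tanh^2(B_i^{[A,B]})\rangle_{[A,B]}\le 1-(m_i^{[A,B]})^2$ to cancel the denominator. Read literally, Lemma~\ref{ct-sg} outputs $D_{[A,B\cup\{j\}]}(g_j)$, i.e.\ the weight $\langle\cosh^{-2}(B_i^{[A,B\cup\{j\}]})\rangle_{[A,B\cup\{j\}]}$, so the paper is implicitly using the same cross-subsystem comparison you isolate; it just does not spell it out. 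Your two-stage plan (pointwise field perturbation $|B_l^{[A,B]}-B_l^{[A,B\cup\{j\}]}|\le 5\bn$ on $\Omega$, then control of the cavity reweighting $\cosh(B_j^{[A,B]})/\langle\cosh(B_j^{[A,B]})\rangle$ via Lemma~\ref{bd-m} at spin $j$) is the right way to justify it.

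One caution about your first stage: you propose to apply \eqref{eq:bd-m-3} ``at spin $l$ in $[A,B\cup\{j,l\}]$'', but Lemma~\ref{bd-m} there would require $a_{[A,B\cup\{j,l\}]}C_\beta<\epsilon$, which is an $(N-k-2)$-spin hypothesis not provided by $\Omega_{N-k-1,\epsilon}$. This is unnecessary: replace that stage by the free Jensen inequality $\langle\cosh^{-2}(B_l^{[A,B\cup\{j\}]})\rangle_{[A,B\cup\{j\}]}\le 1-(m_l^{[A,B\cup\{j\}]})^2$ (no spectral-gap input needed), and then carry out only your second stage to pass from $1-(m_l^{[A,B\cup\{j\}]})^2$ to $1-(m_l^{[A,B]})^2$. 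That second stage uses Lemma~\ref{bd-m} only at spin $j$ in the system $[A,B]$, which needs $a_{[A,B\cup\{j\}]}C_\beta<\epsilon$ and is covered by $\Omega_{N-k-1,\epsilon}$.
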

\begin{proof} 
For $\mathbf c = (c_i)_{i \not \in A \cup B} \in \mathbb R^{N - k}$, we have that
    \begin{equation*}
    \begin{split}
    &\mathbf c^{\mathsf T}  (\Lambda^{[A,B]})^{1/2} M^{[A,B]}(\Lambda^{[A,B]})^{1/2} \mathbf c \\
    &= \sum_{j\not \in A\cup B} \frac{c_j}{\sqrt{1 - (m^{[A,B]}_j)^2}} \Ex{ \sigma_j; \sum_{i \not \in A\cup B} \frac{c_i \sigma_i }{\sqrt{1 - (m^{[A,B]}_i)^2}} }_{[A,B]}\,.
    \end{split}
    \end{equation*}
Fixing $j\not \in A\cup B$ and setting $ g_j(\sigma) := \sum_{i \not \in A\cup B\cup\{j\}} \big(1 - (m^{[A,B]}_i)^2\big)^{-1/2} c_i \sigma_i $, it is clear that $g_j$ is independent of $\sigma_j$ and Lemma \ref{ct-sg} implies that 
    \[ \begin{split}
    0&\leq\mathbf c^{\mathsf T}  (\Lambda^{[A,B]})^{1/2} M^{[A,B]}(\Lambda^{[A,B]})^{1/2} \mathbf c\\
    & \leq C\sqrt{ \frac{ \epsilon }{ C_\beta }} \sum_{j\not \in A\cup B}\sqrt{1 - (m^{[A,B]}_j)^2} |c_j|     \sqrt{\sum_{i \not \in A\cup B\cup\{j\}}\frac{ \big\langle\cosh^{-2}\big(B_i^{[A,B]}\big)\big\rangle_{[A,B]} c_i^2}{1 - (m^{[A,B]}_i)^2}}  +\|\textbf{c}\|_2^2
    \end{split}\]
Noting also that 
        \[\begin{split}
        \big\langle\cosh^{-2}\big(B_i^{[A,B]}\big)\big\rangle_{[A,B]} &= 1 - \big\langle\tanh^{2}\big(B_i^{[A,B]}\big)\big\rangle_{[A,B]} \\
        & \leq 1 - \big\langle \tanh\big(B_i^{[A,B]}\big)\big\rangle_{[A,B]}^2 = 1 - (m^{[A,B]}_i)^2, 
        \end{split}\]
we obtain 
        \begin{align*}
        \mathbf c^{\mathsf T}  (\Lambda^{[A,B]})^{1/2} M^{[A,B]}(\Lambda^{[A,B]})^{1/2} \mathbf c  \leq C\sqrt{ \frac{ \epsilon }{ C_\beta }} \sqrt{N-k} \;\|\textbf{c}\|_2^2.
        \end{align*}
Since the constant $C>0$ on the right hand side in the last bound does neither depend on $ A, B$ nor on the spin configuration $ \sigma_A$ (implicitly contained in the expectations w.r.t. the conditional Gibbs measure $\langle\cdot\rangle_{[A,B]} $), this proves the lemma. 
\end{proof}

The previous corollary is not, yet, enough to conclude Prop. \ref{continuity}. To this end, we need another consequence of Lemma \ref{bd-m}.

\begin{lemma}
\label{ct-sg2}
Let $\beta $ be as in \eqref{eq:defbeta}, $\Omega$ as in \eqref{eq:def-omg}, $C_\beta$ as in \eqref{eq:def-cbeta} and let $\epsilon\in (0,10^{-2})$. Assume that $\beta$ is sufficiently small and set 
        $$ \Omega_{N-k-1,\epsilon} = \{ a_{N-k-1}  C_\beta < \epsilon\}.$$
Moreover, let $ T\in\mathbb{N} $ be fixed (independently of $N$), let $ N- T\leq k \leq N-1  $ and define for $A,B\subset \{1,\dots,N\}$ disjoint with $ |A\cup B| =k$ and $\sigma_A\in \Sigma_{|A|}$ the event          \[\Omega_{A,B,\sigma_A}:=\Big\{ \big\| (\Lambda^{[A,B]})^{1/2}        M^{[A,B]}(\Lambda^{[A,B]})^{1/2}\big\|  > \frac{11}{10} \sqrt{N -k}   \Big\}. \]
Then, there is a universal constant $C >0$ such that
        \[\begin{split}  &\mathbb{P}\bigg(\bigcup_{\substack{ A\cap B=\emptyset,\\ |A\cup B|=k }} \bigcup_{\sigma_A \in \Sigma_{|A|}} \big(\Omega\cap \Omega_{N-k-1,\epsilon} \cap\Omega_{A,B,\sigma_A} \big)\bigg)  \\
        &\hspace{0.5cm} \leq C\,e^{( T+2) \log N + N \log 4  -  N   C_\beta(  CT\epsilon\beta^2)^{-1}  }.\end{split} \]
\end{lemma}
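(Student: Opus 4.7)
The plan is to combine a union bound over triples $(A, B, \sigma_A)$ with a per-triple Gaussian concentration estimate. The union bound produces the prefactor $e^{(T+2)\log N + N\log 4}$: for $N - T \leq k \leq N - 1$ the number of disjoint pairs $(A,B)$ with $|A \cup B| = k$ is bounded by $\binom{N}{N-k}\cdot 2^{k} \leq N^{T}\cdot 2^{N}$, and for each such pair there are at most $2^{|A|} \leq 2^{N}$ configurations $\sigma_A$; summing over the at most $T$ values of $k$ yields at most $T\cdot N^{T}\cdot 4^{N}$ triples. Hence it suffices to prove the per-triple bound $\mathbb{P}[\Omega \cap \Omega_{N-k-1,\epsilon} \cap \Omega_{A,B,\sigma_A}] \leq e^{-N C_\beta / (C T \epsilon \beta^{2})}$.

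For each fixed triple, since $\widetilde M := (\Lambda^{[A,B]})^{1/2} M^{[A,B]} (\Lambda^{[A,B]})^{1/2}$ is positive semi-definite with diagonal entries $1$ and dimension $N-k\leq T$, I would use a $\delta$-net $\mathcal N_\delta \subset \mathbb S^{N-k-1}$ of bounded size $(3/\delta)^T$ to reduce the operator-norm bound to a uniform estimate over $\mathbf{c} \in \mathcal N_\delta$ of the quadratic form $F_{\mathbf{c}}(\mathbf{g}) := \mathbf{c}^{\mathsf T}\widetilde M\,\mathbf{c}$, using $\|\widetilde M\| \leq (1-2\delta)^{-1}\max_{\mathbf{c} \in \mathcal N_\delta} F_{\mathbf{c}}$. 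Next I would apply Gaussian concentration to each $F_{\mathbf{c}}$: on $\Omega$, the Lipschitz constant of $F_{\mathbf{c}}$ in the disorder $\mathbf{g}$ can be estimated by differentiating
\[ \partial_{g_{i_1 \ldots i_p}} m_{ij}^{[A,B]} = \frac{\beta_p}{N^{(p-1)/2}}\,\langle \sigma_i \sigma_j ; \sigma_{i_1} \cdots \sigma_{i_p}\rangle_{[A,B]}, \]
combined with the bound $\|\partial B_j^{[A,B]}\| \leq 5\beta$ in $\Omega$ and the control on $(1-(m_i^{[A,B]})^2)^{-1}$ provided by Lemma \ref{bd-m}. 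Corollary \ref{ct-M} additionally supplies the deterministic upper bound $F_{\mathbf{c}} \leq C\sqrt{\epsilon/C_\beta}\sqrt{N-k}$ on $\Omega \cap \Omega_{N-k-1,\epsilon}$, so Gaussian concentration converts the sub-Gaussian tail for the excess $F_{\mathbf{c}} - C\sqrt{\epsilon/C_\beta}\sqrt{N-k}$ into the desired exponential bound; the identity $C_\beta \asymp \beta^{2}$ for small $\beta$ then produces the factor $C_\beta / (CT\epsilon\beta^{2})$ in the final exponent.

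The hardest step will be extracting a Lipschitz constant for $F_{\mathbf{c}}$ that decays sufficiently rapidly in $N$. A naive $\ell^2$ sum over the $\sim N^p$ Gaussians at each degree, using only the trivial bound $|\langle \sigma_i\sigma_j;\sigma_{i_1}\cdots\sigma_{i_p}\rangle_{[A,B]}|\leq 2$, produces a Lipschitz constant of order $\beta\sqrt{N}$, which is too weak to overcome the $4^N$ union-bound factor. To gain the necessary cancellation, one must exploit the fact that in $\Omega_{N-k-1,\epsilon}$ the third-order cumulants $\langle \sigma_i\sigma_j;\sigma_{i_1}\cdots\sigma_{i_p}\rangle_{[A,B]}$ admit improved bounds, obtained by applying variants of Lemma \ref{ct-sg} and Corollary \ref{ct-M} to these higher-order covariances and invoking the a priori spectral gap estimate $a_{N-k-1} C_\beta < \epsilon$. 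With this improvement in place, Gaussian concentration competes favourably with the $4^N$ union-bound factor and produces the claimed exponent $N C_\beta/(CT\epsilon\beta^{2})$.
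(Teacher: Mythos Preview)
Your proposal has a genuine gap in the Lipschitz-constant step, and it misses the structural feature on which the paper's argument is built.

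The paper does not apply Gaussian concentration to the nonlinear functional $F_{\mathbf{c}}(\mathbf{g}) = \mathbf{c}^{\mathsf T}\widetilde M\,\mathbf{c}$ at all. Instead it exploits the smallness of the subsystem directly: since $|A^c\cap B^c|=N-k\leq T$, the cavity field $B_j^{[A,B]}$ differs from the auxiliary field $B_j^{[A,A^c]}$ (obtained by formally setting $\sigma_{A^c}=0$ in $B_j^{[A,B]}$) by a centered Gaussian of variance at most $CT\beta^2/N$. Because $B_j^{[A,A^c]}$ depends only on $\sigma_A$, it is \emph{constant} under $\langle\cdot\rangle_{[A,B\cup\{j\}]}$, and on the event $\Omega_\delta^c=\{\sup|B_j^{[A,B]}-B_j^{[A,A^c]}|\leq\delta\}$ one has pointwise $|\tanh(B_j^{[A,B]})-m_j^{[A,B]}|\leq 2\delta$. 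Feeding this into the proof of Lemma~\ref{ct-sg} replaces the fourth-moment factor by $C\delta^4$, and repeating the argument of Corollary~\ref{ct-M} gives $\|\widetilde M\|\leq C\delta\sqrt{\epsilon/C_\beta}\,\sqrt{N-k}$ on $\Omega\cap\Omega_{N-k-1,\epsilon}\cap\Omega_\delta^c$. Choosing $\delta_0\sim(C_\beta/\epsilon)^{1/2}$ forces $\|\widetilde M\|\leq\tfrac{11}{10}\sqrt{N-k}$, so the bad event is contained in $\Omega_{\delta_0}$, whose probability is a direct Gaussian-tail union bound over $(A,B,j,\sigma)$ with exponent $N\delta_0^2/(CT\beta^2)\sim NC_\beta/(CT\epsilon\beta^2)$.

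Your route, by contrast, would require a Lipschitz constant for $F_{\mathbf{c}}$ of order $\sqrt{T}\beta/\sqrt{N}$ to compete with the $4^N$ union-bound factor, and you correctly note that the naive estimate is $O(\beta\sqrt{N})$, off by a full power of $N$. The proposed repair via improved third-order cumulant bounds does not close this: the hypothesis $a_{N-k-1}C_\beta<\epsilon$ concerns strictly \emph{smaller} subsystems and gives no decay in $N$ for $\langle\sigma_i\sigma_j;\sigma_{i_1}\cdots\sigma_{i_p}\rangle_{[A,B]}$; and invoking Gaussian concentration with a Lipschitz bound valid only on the random event $\Omega\cap\Omega_{N-k-1,\epsilon}$ is itself problematic. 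The paper sidesteps both issues by applying the Gaussian tail to the \emph{linear} quantity $B_j^{[A,B]}-B_j^{[A,A^c]}$, for which the variance is elementary and the crucial factor $T/N$ appears transparently from the count of the $\leq T$ subsystem spins.
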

\begin{proof}
Let $T\in\mathbb{N}$ be fixed and $A,B\subset \{1,\dots, N\} $ be disjoint with $N-T\leq k=|A\cup B| \leq N - 1$. Now, consider the auxiliary field $  B_j^{[A,A^c]}$ which, formally, is obtained from $ B_j^{[A,  B]}$  by setting $ \sigma_{A^c }=0$ (note that $B\subset A^c$). Then, $  B_j^{[A,A^c]}$ is obviously a function of $\sigma_A$ alone and by \eqref{eq:BjAB}, we have that
        \[\begin{split}
         &(B_j^{[A,B]} -  B_j^{[A,A^c ]})(\sigma) \\
         & = \sum_{p\geq2} \frac{\beta_p}{N^{(p-1)/2}}\bigg( \sum_{\substack{ i_2,\ldots,i_p \in B^c,\\ \exists \; 2\leq l\leq p: \;i_l \in A^c\cap B^c}} g_{j i_2\ldots i_p} \sigma_{i_2} \ldots \sigma_{i_p} +\ldots  \\
                &\hspace{3cm}\ldots+ \!\!\!\! \sum_{\substack{ i_1,\ldots,i_{p-1} \in B^c,\\ \exists \; 1\leq l\leq p-1: \;i_l \in A^c\cap B^c}} g_{i_1 i_2\ldots i_{p-1}j} \sigma_{i_1}\sigma_{i_2} \ldots \sigma_{i_{p-1}}  \bigg) 
        \end{split}\]
for every $\sigma = (\sigma_A, \sigma_{A^c\cap B^c})\in \Sigma_{N-|B|} $. 

Moreover, given $\sigma\in \Sigma_{N-|B|} $, it is straightforward to see that $ \big(B_j^{[A,B]} -  B_j^{[A,A^c]}\big) (\sigma) $ is a centered Gaussian random variable with variance bounded by
        \[ \mathbb{E}\big(B_j^{[A,B]} - B_j^{[A,A^c]}\big)^2 (\sigma)\leq C\sum_{p\geq2 } \frac{N^{p-2}}{N^{p-1}} p\beta_p^2 T   \leq \frac{CT\beta^2 }{N} \]
for some universal constant $C>0$. Setting  
        \[\Omega_{\delta}:= \bigg\{ \sup_{\substack{ A,B\subset\{1,\dots, N\}:\\ A\cap B=\emptyset , |A\cup B| \geq N -T, }} \sup_{j \not \in A \cup B}\sup_{ \sigma \in \Sigma_{N-|B|}} \big|\big(B_j^{[A,B]} -    B_j^{[A,A^c]}\big)(\sigma)\big|  > \delta\bigg\} \]
for $\delta \in (0,1)$, we conclude that
        \begin{equation}\label{eq:expdec1}\begin{split} 
        \mathbb{P} (\Omega_{\delta}) &\leq  \binom{N}{T}T^2 (N-T)2^{N-T}2^{N-|B|} e^{-N \delta^2/(2CT\beta^2)}\\
        &\leq \exp \Big( ( T+2) \log N + N \log 4  -\frac{ N \delta^2}{CT\beta^2}  \Big).
        \end{split}\end{equation}
Now consider the event $ \Omega\cap \Omega_{N-k-1,\epsilon} \cap \Omega_\delta^c$. If $A,B $ are  disjoint s.t. $ |A\cup B| =k$, $\sigma_A\in \Sigma_{|A|}$ and $  g:\Sigma_{N-k} \to \mathbb{R} $ is a function independent of $\sigma_j$, for some $j\not \in A\cup B$, we can proceed as in \eqref{eq:aux1} to \eqref{eq:aux3} of Lemma \ref{ct-sg} to conclude that 
        \[ \Ex{\sigma_j;g}_{[A,B]}^2 \leq C a_{N-k-1} D_{[A,B\cup\{j\}]}(g) \Ex{\big(\tanh(B_j^{[A,B]}) - m^{[A,B]}_j\big)^4}_{[A,B\cup\{j\}]}^{1/2} \]
for some universal $C>0$ . By definition of $\Omega_\delta$, we have 
         \[\begin{split}
        &C \Big\langle\big(\tanh\big(B_j^{[A,B]}\big) - m^{[A,B]}_j\big)^4\Big\rangle_{*} \\
        & =    C \Big\langle\Big(\tanh\big(B_j^{[A,B]}\big) -  \tanh\big(B_j^{[A,A^c]}\big) \\
        &\hspace{1.2cm}+   \big\langle \tanh\big(B_j^{[A,A^c]}\big) - \tanh\big(B_j^{[A,B]}\big)\big\rangle_{[A,B]}\Big)^4\Big\rangle_{*} \leq C \delta^4
        \end{split}\]
in $\Omega\cap \Omega_{N-k-1,\epsilon}\cap\Omega_{\delta}^c$, where we used that $ B_j^{[A,A^c]}$ is constant, conditionally on $\sigma_A$, and that $x\mapsto \tanh(x)$ in $\mathbb{R}$ is globally Lipschitz (with Lipschitz constant bounded by one). That is, in $\Omega\cap \Omega_{N-k-1,\epsilon}\cap\Omega_{\delta}^c$, we have that 
        \[ \begin{split}
            \Ex{\sigma_j;g}_{[A,B]}^2 &\leq \frac{C\epsilon}{C_\beta} \delta^2 \big(1 - (m^{[A,B]}_j)^2\big)^2  D_{[A,B\cup\{j\}]}(g)
        \end{split} \]
uniformly in $A,B, \sigma_A$ and $g$. Therefore, arguing as in the proof of Corollary \ref{ct-M}, we find that in $\Omega\cap \Omega_{N-k-1,\epsilon}\cap\Omega_{\delta}^c$, we have
        \[ \sup_{\|\textbf{c}\|_2=1 }\mathbf c^{\mathsf T}  (\Lambda^{[A,B]})^{1/2} M^{[A,B]}(\Lambda^{[A,B]})^{1/2} \mathbf c  \leq C\delta\sqrt{\frac{\epsilon}{C_\beta}}  \sqrt{N-k}\leq \frac{11}{10} \sqrt{N-k}   \]
for all $\delta>0$ small enough so that $ \delta \leq C^{-1}\big(\frac{C_\beta}{\epsilon}\big)^{1/2}  $. Since this bound is true uniformly in the sets $A,B$ with $|A\cup B|=k $ (with $N-T\leq k \leq N-1)$ and $ \sigma_A\in\Sigma_{|A|}$, we conclude that 
        \[ \bigcup_{\substack{ A\cap B=\emptyset,\\ |A\cup B|=k }} \bigcup_{\sigma_A \in \Sigma_{|A|}} \big(\Omega\cap \Omega_{N-k-1,\epsilon} \cap\Omega_{A,B,\sigma_A} \big) \subset \Omega\cap \Omega_{N-k-1,\epsilon}\cap \Omega_{\delta_0} \subset \Omega_{\delta_0} \]
for  $ \delta_0 = C^{-1}\big(\frac{C_\beta}{\epsilon}\big)^{1/2}$. Hence, the bound \eqref{eq:expdec1} (for $ \delta = \delta_0$) concludes the lemma.
\end{proof}

We can now combine the previous lemmas to prove the main result of this section. 
\begin{proof}[\bf Proof of Prop. \ref{continuity}]
By Eq. \eqref{eq:indu-M} from Lemma \ref{indu-M} and by Corollary \ref{ct-M}, there exists a universal constant $C>0$ s.t. for all $A,B$ disjoint with $|A\cup B| = k<N$, we have in $\Omega\cap \Omega_{N-k-1,\epsilon} $ that
        \[ a_{[A,B]}=a_{[A,B]}(\sigma_A) \leq \Big(1- C\sqrt \epsilon/\sqrt{C_\beta(N-k)}\Big)^{-1} a_{N-k-1}. \]
Here, we set $\Omega_{N-k-1,\epsilon} = \{ a_{N-k-1}  C_\beta < \epsilon\}$ as before. Taking the sup over the spin configurations $ \sigma_A\in \Sigma_{|A|}$ and over all subsystems described by disjoint sets $A,B\subset \{1,\dots, N\}$ with $|A\cup B| = k<N$, this implies that in $\Omega\cap \Omega_{N-k-1,\epsilon} $
        \[ a_{N-k} \leq 5 a_{N-k-1} \]
for all $ k< N - T $, where $T :=  (5C/4)^2 (\epsilon/C_\beta)$. In other words, for such $k$, we have  
        \[ \Omega\cap \Omega_{N-k-1,\epsilon} \cap \{ a_{N-k}> 5a_{N-k-1} \} = \emptyset. \]
For the remaining $  N - T \leq k \leq N-2$, on the other hand, Lemma \ref{indu-M} and the fact that $ \frac{11}{10\sqrt 2}\leq \frac{4}5 $ imply that
        \[\begin{split} &\Omega\cap \Omega_{N-k-1,\epsilon} \cap   \big\{ a_{N-k}> 5a_{N-k-1} \big\}  \subset  \bigcup_{\substack{ A\cap B=\emptyset,\\ |A\cup B|=k }} \bigcup_{\sigma_A \in \Sigma_{|A|}} \big(\Omega\cap \Omega_{N-k-1,\epsilon} \cap\Omega_{A,B,\sigma_A} \big),
        \end{split}\]
where  
        \[\Omega_{A,B,\sigma_A}:=\Big\{ \big\| (\Lambda^{[A,B]})^{1/2} M^{[A,B]}(\Lambda^{[A,B]})^{1/2}\big\|(\sigma_A) > \frac{11}{10} \sqrt{N - |A \cup B|}   \Big\}. \]
By the preceding Lemma \ref{ct-sg2}, the probability of this event is bounded by
        \[\begin{split}
        \mathbb{P}\bigg(\bigcup_{\substack{ A\cap B=\emptyset,\\ |A\cup B|=k }} \bigcup_{\sigma_A \in \Sigma_{|A|}} \big(\Omega\cap \Omega_{N-k-1,\epsilon} \cap\Omega_{A,B,\sigma_A} \big) \bigg)\bigg) &\leq C\,e^{( T+2) \log N + N \log 4  -  N   C_\beta (  CT\epsilon\beta^2)^{-1}  }\\
        &\leq  C\,e^{( C \epsilon/C_\beta +2) \log N + N \log 4  -  N   C_\beta^{2}( C\epsilon^{2}\beta^2)^{-1}  }
         \end{split}\]
for a universal constant $C>0$. Finally, taking a union bound over these remaining $k$ with $  N - T \leq k \leq N-2$, we conclude the claim.  
\end{proof}

\section{Improved Iteration Estimate}
As pointed out in the introduction, the main difficulty in deriving an upper bound on the spectral gap $a_{H_N}$ lies in the fact that a priori it is not simple how to control the error term on the r.h.s. in \eqref{eq:condition}. While in the previous section, we have used the two point correlation matrix to control the error term, this is not, yet, enough to iterate \eqref{eq:condition} and obtain a meaningful upper bound on $a_{H_N}$. However, controlling the error term both through the Dirichlet form and the norm of a related matrix, we get sufficiently strong control on the error that allows us to iterate \eqref{eq:condition} and to give, in combination with Prop. \ref{continuity}, an inductive proof of Theorem \ref{thm:main}. The main result of this section reads as follows. 
\begin{prop} \label{thm:Dichotomy}
Let $\beta $ be as in \eqref{eq:defbeta}, $\Omega$ as in \eqref{eq:def-omg} and $C_\beta$ as in \eqref{eq:def-cbeta}. For $ \epsilon \in (0,10^{-2})$, set 
		\[\Omega_{N-k-1, \epsilon} = \big\{ a_{N-k-1}  \,C_\beta < \epsilon \big\}.\]
Then, for $\beta$ sufficiently small, we have in the event $\Omega\cap \Omega_{N-k-1,\epsilon}$ that
        \begin{equation*}
        \begin{split}
       &  \bigg(1 -  \frac{ C\beta^2 e^{C\beta} \, \max(1, a_{N-k})^2 }{ \epsilon  \,(N-k)} \bigg)  a_{N-k} \leq \Big(1- \frac{1}{N-k}\Big)a_{N-k-1} + \frac{(1+4\epsilon )^5}{N-k} 
        \end{split}
        \end{equation*}
for some universal constant $C>0$.      
\end{prop}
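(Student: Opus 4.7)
The plan is to strengthen Lemma \ref{lem:Condition} by proving that, in the event $\Omega\cap\Omega_{N-k-1,\epsilon}$ and uniformly in $(A,B,\sigma_A)$ with $|A\cup B|=k$, the error term of the Conditioning Lemma satisfies
\[E:=\sum_{j\notin A\cup B}\frac{\langle\sigma_j;f\rangle_{[A,B]}^2}{1-(m_j^{[A,B]})^2}\leq (1+4\epsilon)^5 D_{[A,B]}(f) + \frac{C\beta^2 e^{C\beta}\max(1,a_{N-k})^2}{\epsilon}\langle f;f\rangle_{[A,B]}.\]
Substituting this into Lemma \ref{lem:Condition}, rearranging for $\langle f;f\rangle_{[A,B]}/D_{[A,B]}(f)$, and taking the supremum over non-constant $f$ and over $(A,B,\sigma_A)$ with $|A\cup B|=k$ then produces Prop.\ \ref{thm:Dichotomy} directly, with the factor $\max(1,a_{N-k})^2$ on the left emerging from the supremum.

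For each $j\notin A\cup B$, I would write $f=f_j+\sigma_j g_j$ with $f_j,g_j$ independent of $\sigma_j$ (so $g_j=\sigma_j\partial_j f$ and $(\partial_j f)^2=g_j^2$) and let $\phi_j:=\langle f\mid \sigma_{(A\cup B\cup\{j\})^c}\rangle_{[A,B]}=f_j+\tanh(B_j^{[A,B]})g_j$ denote the conditional expectation. Since $\sigma_j-\tanh(B_j^{[A,B]})$ has vanishing conditional expectation given $\sigma_{(A\cup B\cup\{j\})^c}$, and $\sigma_j(\sigma_j-\tanh(B_j^{[A,B]}))$ has conditional expectation $\cosh^{-2}(B_j^{[A,B]})$, a short computation yields the decomposition
\[\langle\sigma_j;f\rangle_{[A,B]}=\langle\cosh^{-2}(B_j^{[A,B]})\,g_j\rangle_{[A,B]}+\langle\sigma_j;\phi_j\rangle_{[A,B]}.\]
This cleanly separates a diagonal piece matching the $j$-th summand of $D_{[A,B]}(f)$ from an off-diagonal piece whose second argument $\phi_j$ is independent of $\sigma_j$, allowing Lemma \ref{ct-sg} to be applied directly. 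Young's inequality $(a+b)^2\leq(1+4\epsilon)a^2+(1+(4\epsilon)^{-1})b^2$ then splits the square into the two contributions.

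The main term is essentially free: Cauchy--Schwarz combined with the identity $m_j^{[A,B]}=\langle\tanh(B_j^{[A,B]})\rangle_{[A,B]}$ and the Jensen bound $\langle\tanh^2(B_j^{[A,B]})\rangle_{[A,B]}\geq(m_j^{[A,B]})^2$ gives
\[\frac{\langle\cosh^{-2}(B_j^{[A,B]})g_j\rangle_{[A,B]}^2}{1-(m_j^{[A,B]})^2}\leq\frac{\langle\cosh^{-2}(B_j^{[A,B]})\rangle_{[A,B]}}{1-(m_j^{[A,B]})^2}\langle\cosh^{-2}(B_j^{[A,B]})(\partial_j f)^2\rangle_{[A,B]}\leq\langle\cosh^{-2}(B_j^{[A,B]})(\partial_j f)^2\rangle_{[A,B]},\]
since $\langle\cosh^{-2}(B_j^{[A,B]})\rangle_{[A,B]}=1-\langle\tanh^2(B_j^{[A,B]})\rangle_{[A,B]}\leq 1-(m_j^{[A,B]})^2$. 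Summing over $j$ and incorporating the Young factor, together with any further $(1+4\epsilon)$-type multiplicative corrections from Lemma \ref{bd-m} introduced when comparing $\langle\cdot\rangle_{[A,B]}$ and $\langle\cdot\rangle_{[A,B\cup\{j\}]}$ at the various intermediate steps, produces the main contribution $(1+4\epsilon)^5 D_{[A,B]}(f)$.

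The principal obstacle is the error term. Lemma \ref{ct-sg} applied to the $\sigma_j$-independent function $\phi_j$ gives
\[\frac{\langle\sigma_j;\phi_j\rangle_{[A,B]}^2}{1-(m_j^{[A,B]})^2}\leq\frac{C\epsilon}{C_\beta}(1-(m_j^{[A,B]})^2)D_{[A,B\cup\{j\}]}(\phi_j),\]
and the Young prefactor $(1+(4\epsilon)^{-1})$ absorbs the explicit $\epsilon$, reducing the problem to bounding $\sum_{j\notin A\cup B}(1-(m_j^{[A,B]})^2)D_{[A,B\cup\{j\}]}(\phi_j)$ by $CC_\beta\beta^2 e^{C\beta}\max(1,a_{N-k})^2/\epsilon\cdot\langle f;f\rangle_{[A,B]}$. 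While Jensen immediately gives $\langle\phi_j;\phi_j\rangle_{[A,B]}\leq\langle f;f\rangle_{[A,B]}$, the Dirichlet form $D_{[A,B\cup\{j\}]}(\phi_j)$ admits no such trivial bound, and a naive comparison to $D_{[A,B]}(f)$ via $\phi_j=f_j+\tanh(B_j^{[A,B]})g_j$ introduces a spurious factor of $(N-k)$ without further cancellations. The strategy is to expand $D_{[A,B\cup\{j\}]}(\phi_j)=\sum_{l\neq j}\langle\cosh^{-2}(B_l^{[A,B\cup\{j\}]})(\partial_l\phi_j)^2\rangle_{[A,B\cup\{j\}]}$, interchange the sums over $j$ and $l$, and exploit the event $\Omega$, which provides $\|(\partial_i B_l^{[A,B]})\|\leq 5\beta$, together with Lemma \ref{bd-m}, to move between the measures $\langle\cdot\rangle_{[A,B\cup\{j\}]}$ and $\langle\cdot\rangle_{[A,B]}$ and between the cavity fields $B_l^{[A,B\cup\{j\}]}$ and $B_l^{[A,B]}$, each exchange producing factors of order $1+C\beta$ and $e^{C\beta}$ that together give the $\beta^2 e^{C\beta}$-prefactor. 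Two consecutive applications of the Poincar\'e inequality at level $[A,B]$, namely $\langle\cdot;\cdot\rangle_{[A,B]}\leq a_{N-k}D_{[A,B]}(\cdot)$, are then required to convert the residual derivative- and conditional-expectation-type quantities back to $\langle f;f\rangle_{[A,B]}$; this double use of $a_{N-k}$ produces the quadratic factor $\max(1,a_{N-k})^2$ and is precisely what makes Prop.\ \ref{thm:Dichotomy} self-referential in $a_{N-k}$ rather than involving only $a_{N-k-1}$.
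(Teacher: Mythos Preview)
Your decomposition $\langle\sigma_j;f\rangle_{[A,B]}=\langle\cosh^{-2}(B_j^{[A,B]})g_j\rangle_{[A,B]}+\langle\sigma_j;\phi_j\rangle_{[A,B]}$ is correct, and your treatment of the main term is in fact cleaner than the paper's (it needs only one factor of $(1+4\epsilon)$, not five). The gap is in the error term. Applying Lemma~\ref{ct-sg} to $\phi_j$ produces the Dirichlet form $D_{[A,B\cup\{j\}]}(\phi_j)$, but $\phi_j$ is the conditional expectation of $f$ and therefore has derivatives of the \emph{same order} as $f$: for $l\neq j$ one computes $\partial_l\phi_j=E_j[\partial_l f]+(\partial_l\tanh B_j^{[A,B]})\cdot(g_j-2\partial_l g_j)$, so that $D_{[A,B\cup\{j\}]}(\phi_j)$ is generically comparable to $D_{[A,B]}(f)$, not small. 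Summing over $j$ then yields a quantity of order $(N-k)\,D_{[A,B]}(f)$, which is far too large. Your two proposed mechanisms for smallness both fail: exchanging $\langle\cdot\rangle_{[A,B\cup\{j\}]}$ for $\langle\cdot\rangle_{[A,B]}$ and $B_l^{[A,B\cup\{j\}]}$ for $B_l^{[A,B]}$ only gives multiplicative corrections of size $1+O(\beta)$, not a small prefactor $\beta^2$; and the Poincar\'e inequality runs in the wrong direction (it bounds variance by Dirichlet form, whereas you need to bound a Dirichlet form by a variance).

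The paper avoids this by writing the error term as a covariance of $f$ with a function whose derivatives are themselves small. Concretely, it uses the identity $\langle f;\sigma_j\rangle_{[A,B]}=\langle\partial_j f;\sigma_j\rangle_{[A,B]}+\tfrac12\langle f;h_j^{[A,B]}\rangle_{[A,B]}$ with an explicit $h_j^{[A,B]}$ satisfying $\partial_l h_j^{[A,B]}=O(\partial_l B_j^{[A,B]})$ for $l\neq j$, then bounds $\sum_j\langle f;h_j\rangle^2/(1-m_j^2)\le\|S^{[A,B]}\|\,\langle f;f\rangle$ for the Gram-type matrix $S^{[A,B]}_{ij}=\langle h_i;h_j\rangle/\sqrt{(1-m_i^2)(1-m_j^2)}$, and finally bounds $\|S^{[A,B]}\|$ by applying the spectral gap to $\sum_j c_j h_j/\sqrt{1-m_j^2}$ and exploiting the $O(\beta)$ size of its derivatives via $\Omega$. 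Your decomposition could be salvaged along the same lines: since $\langle\sigma_j;\phi_j\rangle_{[A,B]}=\langle f;\tanh(B_j^{[A,B]})-m_j^{[A,B]}\rangle_{[A,B]}$, the right move is not Lemma~\ref{ct-sg} but rather to introduce the matrix with entries $\langle u_i;u_j\rangle_{[A,B]}/\sqrt{(1-m_i^2)(1-m_j^2)}$, $u_j:=\tanh(B_j^{[A,B]})-m_j^{[A,B]}$, and bound its norm exactly as the paper does for $S^{[A,B]}$.
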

 

To derive the above result, our starting point is once again the upper bound \eqref{eq:condition}. However, in this section we relate the error term (the second term on the r.h.s. in \eqref{eq:condition}) to the Dirichlet form, up to another error that is indeed small at sufficiently high temperature. To be more precise, let $f:\Sigma_{N-|A\cup B|}\to \mathbb{R}$, then 
        \begin{align*}
        &\Ex{f;\sigma_j}_{[A,B]} -\Ex{\partial_j f;\sigma_j}_{[A,B]} \\
        & = \frac{1}{2} \Big\langle\big(f(\sigma) + f(\hat \sigma_j)\big) \sum_{\sigma_j\in \{\pm1\}} \frac{ (\sigma_j - m^{[A,B]}_j) e^{\sigma_jB_j^{[A,B]}}}{2\big\langle\cosh\big(B_j^{[A,B]}\big)\big\rangle_{[A,B\cup\{j\}]}}\Big\rangle_{[A,B\cup\{j\}]}   \\
        & = \frac{1}{2}\Big\langle  f(\sigma)  \Big( (\sigma_j - m^{[A,B]}_j)   -   e^{-2\sigma_j B_j^{[A,B]}} ( \sigma_j+m^{[A,B]}_j )\Big) \Big\rangle_{[A,B]} =: \frac{1}{2}\big\langle f h_j^{[A,B]}\big\rangle_{[A,B]}\,,
        \end{align*}
where we set 
        \begin{equation*}
        h_j^{[A,B]} = ( \sigma_j - m^{[A,B]}_j ) - e^{-2\sigma_j B_j^{[A,B]}} ( \sigma_j+m^{[A,B]}_j ). 
        \end{equation*}
By choosing $ f \equiv 1$, we observe that $ \big\langle h_j^{[A,B]}\big\rangle_{[A,B]}=0$, so that in fact
        \begin{equation}\label{eq:hjid}\Ex{f;\sigma_j}_{[A,B]} = \Ex{\partial_j f;\sigma_j}_{[A,B]} + \frac{1}{2}\big\langle f; h_j^{[A,B]}\big\rangle_{[A,B]} \end{equation}
for general $f:\Sigma_{N-|A\cup B|}\to \mathbb{R}$. The last identity shows that the error term in \eqref{eq:condition} can be controlled by $D_{[A,B]}(f)$, through the first term on the r.h.s. in \eqref{eq:hjid}, and, as explained below, by the norm of $S^{[A,B]} = (S^{[A,B]}_{ij})_{1\leq i,j\leq N-|A \cup B|}$, defined by
        \begin{equation} \label{eq:def-H}
        S^{[A,B]}_{ij} = \frac{\langle h^{[A,B]}_i; h^{[A,B]}_j \rangle_{[A,B]}}{ \sqrt{1-(m^{[A,B]}_i)^2} \sqrt{1- (m^{[A,B]}_j)^2}}.
        \end{equation}
The crucial observation is that the operator norm of $S^{[A,B]}$ is small if $\beta$ is sufficiently small, assuming some rough a priori information on $a_{N-k-1}$, like in the previous section. We make this more precise in the following auxiliary lemmas and conclude Prop. \ref{thm:Dichotomy} at the end of this section by combining Lemma \ref{lm:41} and Lemma \ref{lem:MatrixBound} below.   

\begin{lemma}\label{lm:41} Let $\epsilon\in (0,10^{-2})$, $\Omega$ be as in \eqref{eq:def-omg}, $C_\beta$ as in \eqref{eq:def-cbeta} and set 
        $$ \Omega_{N-k-1,\epsilon} = \{ a_{N-k-1}  C_\beta < \epsilon\}.$$
Assume that $\beta$ is sufficiently small. Then, for disjoint $A, B\subset \{1,\dots, N\}$ s.t. $|A\cup B| = k$, we have in $ \Omega\cap \Omega_{N-k-1,\epsilon}$ that
\begin{equation*}
    \sum_{j\not \in A\cup B} \frac{\Ex{\sigma_j;f}_{[A,B]}^2}{1 - (m^{[A,B]}_j)^2} \leq  (1 + 4\epsilon )^5 D_{[A,B]}(f)     +  \frac{1}{2 \epsilon }  \|S^{[A,B]}\|  \langle f; f \rangle_{[A,B]}\,.
\end{equation*}
As a consequence, we have in $ \Omega\cap \Omega_{N-k-1,\epsilon}$ that
    \begin{equation*} 
    \Big(1 -  \frac{1}{2 \epsilon }\frac{\| S^{[A,B]}\|}{N-k }\Big) a_{[A,B]} \le  \Big( 1- \frac{1}{N-k} \Big)  a_{N-k-1} + \frac{ (1 + 4\epsilon)^5 }{N-k}  \,.
\end{equation*}
 
\end{lemma}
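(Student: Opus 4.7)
The plan is to split the squared covariance on the left of the first claim via the identity \eqref{eq:hjid} and an elementary $(a+b)^2$ estimate, handle the two resulting pieces separately, and then feed the result into the Conditioning Lemma \ref{lem:Condition} for the consequence. Concretely, $(a+b)^2\leq (1+\delta)a^2+(1+\delta^{-1})b^2$ applied to \eqref{eq:hjid} gives, for any $\delta>0$,
\[
\langle f;\sigma_j\rangle^2_{[A,B]} \leq (1+\delta)\langle\partial_j f;\sigma_j\rangle^2_{[A,B]} + \tfrac{1+\delta^{-1}}{4}\langle f;h_j^{[A,B]}\rangle^2_{[A,B]}.
\]
I will take $\delta = \epsilon/(2-\epsilon)$, so that $(1+\delta^{-1})/4 = 1/(2\epsilon)$ and $1+\delta \leq 1+4\epsilon$. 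After summing over $j\notin A\cup B$ divided by $1-(m_j^{[A,B]})^2$, the first inequality of the lemma will follow from the two bounds
\begin{align*}
\mathrm{(I)}\ &\sum_{j\notin A\cup B}\frac{\langle\partial_j f;\sigma_j\rangle^2_{[A,B]}}{1-(m_j^{[A,B]})^2} \leq (1+4\epsilon)^4\, D_{[A,B]}(f), \\
\mathrm{(II)}\ &\sum_{j\notin A\cup B}\frac{\langle f;h_j^{[A,B]}\rangle^2_{[A,B]}}{1-(m_j^{[A,B]})^2} \leq \|S^{[A,B]}\|\,\langle f;f\rangle_{[A,B]}.
\end{align*}

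For (I), the key input is weighted Cauchy--Schwarz combined with Lemma \ref{bd-m}. Since the conditional distribution of $\sigma_j$ given the remaining spins has mean $\tanh(B_j^{[A,B]})$, one has $\langle\partial_j f;\sigma_j\rangle_{[A,B]} = \langle\partial_j f;\tanh(B_j^{[A,B]})\rangle_{[A,B]}$, and Cauchy--Schwarz with weight $\cosh^{-2}(B_j^{[A,B]})$ yields
\[
\langle\partial_j f;\sigma_j\rangle^2_{[A,B]} \leq \langle \cosh^{-2}(B_j^{[A,B]})(\partial_j f)^2\rangle_{[A,B]}\cdot\langle(\sinh(B_j^{[A,B]})-m_j^{[A,B]}\cosh(B_j^{[A,B]}))^2\rangle_{[A,B]}.
\]
Hence (I) reduces to the pointwise estimate
\[
\langle(\sinh(B_j^{[A,B]})-m_j^{[A,B]}\cosh(B_j^{[A,B]}))^2\rangle_{[A,B]} \leq (1+4\epsilon)^4\,(1-(m_j^{[A,B]})^2),
\]
which I would prove by expanding the square via the cubic identities $4\cosh^3 = \cosh(3\,\cdot\,)+3\cosh$ and $4\sinh\cosh^2=\sinh(3\,\cdot\,)+\sinh$, passing from $\langle\cdot\rangle_{[A,B]}$ to $\langle\cdot\rangle_*:=\langle\cdot\rangle_{[A,B\cup\{j\}]}$ via the standard identity $\langle F\rangle_{[A,B]} = \langle F\cosh(B_j^{[A,B]})\rangle_*/\langle\cosh(B_j^{[A,B]})\rangle_*$ for $\sigma_j$-independent $F$, and then replacing each $\langle\cosh(K B_j^{[A,B]})\rangle_*$ and $\langle e^{\pm K B_j^{[A,B]}}\rangle_*$ for $K\in\{1,2,3\}$ by the corresponding value at $\mu := \langle B_j^{[A,B]}\rangle_*$ up to a factor $(1+4\epsilon)$, using Lemma \ref{bd-m} for the upper bounds and Jensen's inequality for the lower bounds. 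The leading ``frozen-field'' expression collapses exactly to $1-\tanh^2(\mu)$ (since $\sinh(\mu)-\tanh(\mu)\cosh(\mu)=0$), so only the $(1+4\epsilon)$-errors survive.

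For (II), the identity \eqref{eq:hjid} with $f\equiv 1$ shows $\langle h_j^{[A,B]}\rangle_{[A,B]}=0$, so the linear map $T\colon\mathbb{R}^{N-k}\to L^2(d\langle\cdot\rangle_{[A,B]})$, $Tc := \sum_{j\notin A\cup B}c_j h_j^{[A,B]}/\sqrt{1-(m_j^{[A,B]})^2}$, satisfies $T^*T = S^{[A,B]}$ and $(T^*g)_j = \langle g;h_j^{[A,B]}\rangle_{[A,B]}/\sqrt{1-(m_j^{[A,B]})^2}$. Applying this to $g = f-\langle f\rangle_{[A,B]}$ gives
\[
\sum_{j\notin A\cup B}\frac{\langle f;h_j^{[A,B]}\rangle^2_{[A,B]}}{1-(m_j^{[A,B]})^2} = \|T^*(f-\langle f\rangle)\|_2^2 \leq \|T\|^2\langle f;f\rangle_{[A,B]} = \|S^{[A,B]}\|\,\langle f;f\rangle_{[A,B]},
\]
proving (II) and hence the first claim. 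For the consequence on $a_{[A,B]}$, I would feed the first claim into \eqref{eq:condition}, move the $\tfrac{\|S^{[A,B]}\|}{2\epsilon(N-k)}\langle f;f\rangle_{[A,B]}$ contribution to the left-hand side, divide by $D_{[A,B]}(f)$, and take the supremum over non-constant $f$ via the definition \eqref{eq:def-aAB}.

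The hardest point, requiring the most care, is the pointwise estimate in (I). Both $\langle(\sinh(B_j^{[A,B]})-m_j^{[A,B]}\cosh(B_j^{[A,B]}))^2\rangle_{[A,B]}$ and $1-(m_j^{[A,B]})^2$ simultaneously vanish in the deterministic-$B_j^{[A,B]}$ limit, so no $O(1)$ slack is available; the proof genuinely depends on the two-sided, near-deterministic control of $\langle\cosh(KB_j^{[A,B]})\rangle_*$, $\langle\sinh(KB_j^{[A,B]})\rangle_*$, and $\langle e^{\pm KB_j^{[A,B]}}\rangle_*$ for $K=1,2,3$ provided by Lemma \ref{bd-m} and Jensen, together with the algebraic cancellations that arise in the expansion of $\langle\cosh^3(B_j^{[A,B]})(\tanh(B_j^{[A,B]})-m_j^{[A,B]})^2\rangle_*$.
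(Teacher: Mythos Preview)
Your overall architecture matches the paper's exactly: split via \eqref{eq:hjid}, apply an $(a+b)^2$ inequality, treat the $\partial_j f$--piece and the $h_j$--piece separately, and then plug into the Conditioning Lemma. Your bound (II) is correct and is precisely what the paper sketches by pointing back to \eqref{eq:f=csigma+h}.

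The gap is in (I). The identity you invoke,
\[
\langle\partial_j f;\sigma_j\rangle_{[A,B]} = \langle\partial_j f;\tanh(B_j^{[A,B]})\rangle_{[A,B]},
\]
would follow from the conditional-mean argument \emph{only if $\partial_j f$ were $\sigma_j$-measurable in the other spins}. It is not: $\partial_j f$ is odd in $\sigma_j$ (flipping $\sigma_j$ changes its sign). In fact the two covariances differ by $\langle(\sigma_j\partial_j f)\cosh^{-2}(B_j^{[A,B]})\rangle_{[A,B]}$, which is generically nonzero. Consequently your Cauchy--Schwarz bound with the factor $\langle(\sinh B_j^{[A,B]}-m_j^{[A,B]}\cosh B_j^{[A,B]})^2\rangle_{[A,B]}$ is false: in the deterministic-field limit $B_j^{[A,B]}\equiv b$ that factor is identically zero (since $\sinh b-\tanh b\,\cosh b=0$), while for $f=\sigma_j$ one has $\langle\partial_j f;\sigma_j\rangle_{[A,B]}=1-(m_j^{[A,B]})^2>0$. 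So the inequality cannot hold, and your remark that ``the leading frozen-field expression collapses exactly'' is a warning sign rather than a feature: the leading term must be exactly $1-(m_j^{[A,B]})^2$, not $0$.

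The correct route, which the paper takes, exploits instead that $\sigma_j\partial_j f$ is $\sigma_j$-independent. Writing $\langle\partial_j f;\sigma_j\rangle_{[A,B]}=\langle(1-m_j^{[A,B]}\tanh B_j^{[A,B]})\,\sigma_j\partial_j f\rangle_{[A,B]}$ and applying Cauchy--Schwarz with weight $\cosh^{-2}(B_j^{[A,B]})$ produces the factor
\[
\big\langle(\cosh B_j^{[A,B]}-m_j^{[A,B]}\sinh B_j^{[A,B]})^2\big\rangle_{[A,B]},
\]
with $\cosh$ and $\sinh$ \emph{interchanged} relative to your expression. Its frozen-field value is $(\cosh b-\tanh b\,\sinh b)^2=\cosh^{-2}b=1-\tanh^2 b$, matching $1-(m_j^{[A,B]})^2$ on the nose. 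From there the paper splits $\cosh x-y\sinh x=\tfrac12(1-y)e^x+\tfrac12(1+y)e^{-x}$ and uses Lemma~\ref{bd-m} to absorb the deviations into $(1+4\epsilon)^4$, yielding your target bound (I). Once you make this swap, the rest of your plan goes through unchanged.
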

\begin{proof}
Let $j\not \in A\cup B$. From \eqref{eq:hjid}, we obtain that 
	\begin{equation*}
    \langle f ;\sigma_j \rangle_{[A,B]}^2 \le (1 + \epsilon )  \langle \partial_j f; \sigma_j \rangle_{[A,B]}^2+ \Big(\frac{1}{4} + \frac{1}{4 \epsilon } \Big) \langle f; h_j^{[A,B]} \rangle^2_{[A,B]}.
	\end{equation*}
Also, since $\sigma_j\partial_j f$ does not depend on $\sigma_j$, we get
        \begin{align*}
        \big\langle\partial_j f;\sigma_j\big\rangle_{[A,B]}  &= \big\langle(1 - \sigma_j m^{[A,B]}_j)\sigma_j\partial_j f\big\rangle_{[A,B]}  \\
        & = \big\langle\big(1 - m^{[A,B]}_j\tanh\big(B_j^{[A,B]}\big) \big)\sigma_j\partial_j f\big\rangle_{[A,B]},
        \end{align*}
so that by Cauchy-Schwarz
        \[\begin{split}
            \big\langle\partial_j f;\sigma_j\big\rangle_{[A,B]}^2 & \leq \big\langle \cosh^{-2}\big(B_j^{[A,B]} \big) (\partial_j f)^2 \big\rangle_{[A,B]} \\
            &\hspace{0.5cm}\times\big\langle \big( \cosh\big(B_j^{[A,B]} \big) - m^{[A,B]}_j\sinh\big(B_j^{[A,B]} \big)^2 \big\rangle_{[A,B ]}. 
        \end{split}\]
Now, splitting $\cosh(x) - y\sinh(x) = \frac12 (1-y)e^x + \frac12 (1+y)e^{-x} $ for $x,y\in \mathbb{R}$ and recalling that $m^{[A,B]}_j = \langle \sinh (B_j^{[A,B]}  )    \rangle_{[A,B\cup\{j\}]}/\langle \cosh (B_j^{[A,B]}  )    \rangle_{[A,B\cup\{j\}]} $, we bound 
        \begin{equation} \label{eq:lm41aux2}
        \begin{split}
        & \frac14 \frac{ \big\langle \cosh \big(B_j^{[A,B]}  \big) - \sinh \big(B_j^{[A,B]}  \big)   \big\rangle_{[A,B\cup \{j\}]}^2 }{\big\langle \cosh \big(B_j^{[A,B]}  \big)    \big\rangle_{[A,B\cup\{j\}]}^2 }   \frac{ \big\langle  \cosh \big(B_j^{[A,B]}  \big)\exp \big(2 B_j^{[A,B]} \big)\big\rangle_{[A,B\cup\{j\}]} }{\big\langle \cosh \big(B_j^{[A,B]}  \big)    \big\rangle_{[A,B\cup\{j\}]}}  \\
        &\leq \frac14\frac{ \big\langle \exp \big(-B_j^{[A,B]}  \big)       \big\rangle_{[A,B\cup\{j\}]}^2 }{ \cosh^2\big( \big\langle   B_j^{[A,B]}      \big\rangle_{[A,B\cup\{j\}]}\big) } \frac{\big\langle \cosh ^2\big(B_j^{[A,B]}  \big)    \big\rangle_{[A,B\cup\{j\}]}^{1/2}}{\big\langle \cosh \big(B_j^{[A,B]}  \big)    \big\rangle_{[A,B\cup\{j\}]}} \big\langle  \exp \big(4 B_j^{[A,B]} \big)\big\rangle_{[A,B\cup\{j\}]}^{1/2}\\
        &\leq \frac14 (1+4\epsilon)^4  \big\langle \cosh^2 \big( B_j^{[A,B]}  \big)\big\rangle_{[A,B\cup\{j\}]}^{-1} 
        \end{split}
        \end{equation}
as well as
          \begin{equation} \label{eq:lm41aux3}
          \begin{split}
         & \frac14 \frac{ \big\langle \cosh \big(B_j^{[A,B]}  \big) + \sinh \big(B_j^{[A,B]}  \big)   \big\rangle_{[A,B\cup \{j\}]}^2 }{\big\langle \cosh \big(B_j^{[A,B]}  \big)    \big\rangle_{[A,B\cup\{j\}]}^2 }  \frac{ \big\langle  \cosh \big(B_j^{[A,B]}  \big)\exp \big(-2 B_j^{[A,B]} \big)\big\rangle_{[A,B\cup\{j\}]} }{\big\langle \cosh \big(B_j^{[A,B]}  \big)    \big\rangle_{[A,B\cup\{j\}]}}\\
        &\leq \frac14\frac{ \big\langle \exp \big(B_j^{[A,B]}  \big)       \big\rangle_{[A,B\cup\{j\}]}^2 }{ \cosh^2\big( \big\langle   B_j^{[A,B]}    \big\rangle_{[A,B\cup\{j\}]}\big) } \frac{\big\langle \cosh ^2\big(B_j^{[A,B]}  \big)    \big\rangle_{[A,B\cup\{j\}]}^{1/2}}{\big\langle \cosh \big(B_j^{[A,B]}  \big)    \big\rangle_{[A,B\cup\{j\}]}}   \big\langle  \exp \big(-2 B_j^{[A,B]} \big)\big\rangle_{[A,B\cup\{j\}]}\\
        &\leq  \frac14(1+4\epsilon)^4  \big\langle \cosh^2 \big( B_j^{[A,B]}  \big)\big\rangle_{[A,B\cup\{j\}]}^{-1} ,
        \end{split}
        \end{equation}
where we used repeatedly the bound \eqref{eq:bd-m-3} from Lemma \ref{bd-m}. Using also the first bound \eqref{eq:bd-m-1} from Lemma \ref{bd-m}, the previous bounds yield altogether that 
        \begin{equation*}\label{eq:lm41aux1}
            \frac{\big\langle\partial_j f;\sigma_j\big\rangle_{[A,B]}^2} {1-\big(m^{[A,B]}_j\big)^2}  \leq (1+4\epsilon)^4\big\langle \cosh^{-2}\big(B_j^{[A,B]} \big) (\partial_j f)^2 \big\rangle_{[A,B]}.
        \end{equation*}
Combining this bound with Lemma \ref{lem:Condition}, we obtain
    \begin{align*}
     \langle f ; f \rangle_{[A,B]} \le & a_{N-k-1} \Big( 1- \frac{1}{N-k} \Big)  D_{[A,B]}(f) + \frac{ (1+\epsilon ) (1 + 4\epsilon)^4}{N-k} D_{[A,B]}(f) \nonumber \\
     &+\frac{1}{2 \epsilon }\frac{1}{N-k}\sum_{j \not \in A \cup B} \frac{\langle f ;h^{[A,B]}_j \rangle_{[A,B]}^2}{1- (m^{[A,B]}_j)^2}. 
    \end{align*}
To conclude the lemma, the last term on the r.h.s. of the last equation can be estimated similarly as in \eqref{eq:f=csigma+h}.
\end{proof}

To obtain the improved iteration bound of Proposition \ref{thm:Dichotomy}, the previous lemma suggests to study the norm of $S^{[A,B]} $, introduced in \eqref{eq:def-H}. 
We do this in two main steps and, similarly as in the previous section, we need a technical preparation whose proof is explained in Section \ref{sec:est}. 

\begin{Lem} \label{lem:Xbound}
Let $\beta$ be as in \eqref{eq:defbeta}, $\Omega$ as in \eqref{eq:def-omg}, $ A,B\subset \{1,\dots,N\}$ be disjoint with $ |A\cup B| = k$ and for $q\in \mathbb{N}$, let
		\begin{equation*}
   		I_q^{[A,B]}: =\max \bigg[ \max_{i\not \in A\cup B} \sum_{j \not \in A\cup B}\big |\partial_i B_j^{[A,B]}\big|^q,\max_{j\not \in A\cup B} \sum_{i \not \in A\cup B} \big|\partial_i B_j^{[A,B]}\big|^q \bigg]\,. 
		\end{equation*} 
Then, for every $q\geq 2$, we have in $\Omega$ that  $ I_q^{[A,B]} \leq (5\bn)^q\,.$

Moreover, defining $ X_q^{[A,B]}$ as the matrix with entries
        \[ X_{q,ij}^{[A,B]}:= \big( \partial_i B^{[A,B]}_j\big)^{q}\hspace{0.5cm} \text{ for } 1\leq i,j\leq N-k,  \]
the matrix norm of $ X_q^{[A,B]}$ is bounded in $\Omega$ by $ \big\| X_q^{[A,B]} \big \|\leq (5\beta)^q$, for every $q\in\mathbb{N}$.
\end{Lem}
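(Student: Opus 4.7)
The plan is to work pointwise on the event $\Omega$, setting $M \equiv M^{[A,B]} = (\partial_i B_j^{[A,B]})_{i,j \not\in A\cup B}$, for which $\|M\| \leq 5\beta$ uniformly by the definition of $\Omega$. The strategy is to first bound $I_q^{[A,B]}$ by leveraging the operator norm to control row and column sums of squared entries, and then to use a standard Schur/interpolation estimate to pass to the matrix norm of the entrywise power $X_q^{[A,B]}$.

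The key computation is the case $q=2$: row and column sums of squared entries are exactly the diagonal entries of $MM^\top$ and $M^\top M$, and these positive semidefinite matrices have diagonal entries bounded by their operator norm, so
\begin{equation*}
\sum_{j} \big(\partial_i B_j^{[A,B]}\big)^2 = (MM^\top)_{ii} \leq \|M\|^2 \leq (5\beta)^2,
\end{equation*}
and analogously for column sums, giving $I_2^{[A,B]} \leq (5\beta)^2$. For $q > 2$, since each entry $|M_{ij}|$ is itself bounded by $\|M\| \leq 5\beta$ (a matrix entry is dominated by its operator norm applied to standard basis vectors), one would factor $|M_{ij}|^q \leq (5\beta)^{q-2} M_{ij}^2$ and sum to reduce to the $q=2$ bound, obtaining $I_q^{[A,B]} \leq (5\beta)^q$.

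For the matrix norm of $X_q^{[A,B]}$, the case $q=1$ is immediate since $X_1^{[A,B]} = M$. For $q \geq 2$, I would invoke the standard interpolation bound $\|A\|^2 \leq \|A\|_{1\to 1}\,\|A\|_{\infty\to\infty}$, where the two factors on the right are the maximal column and row sums of absolute values. Since $|(X_q^{[A,B]})_{ij}| = |M_{ij}|^q$, both are bounded by $I_q^{[A,B]} \leq (5\beta)^q$, yielding $\|X_q^{[A,B]}\| \leq (5\beta)^q$. I do not expect a significant obstacle: each step is a one-line consequence of elementary linear algebra once the operator norm bound from $\Omega$ is in hand. The only mild subtlety is that for odd $q$ the entries of $X_q^{[A,B]}$ may be negative, but since the interpolation bound is applied to absolute values this is immaterial.
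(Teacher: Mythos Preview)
Your proposal is correct and matches the paper's proof essentially line for line: the paper also obtains $I_2^{[A,B]}$ from the diagonal of $X_1^\top X_1$ and $X_1 X_1^\top$, bootstraps to $q>2$ via $|M_{ij}|\le 5\beta$, and then bounds $\|X_q^{[A,B]}\|$ for $q\ge 2$ by $I_q^{[A,B]}$ via what it calls ``standard matrix properties'' (your Schur/interpolation inequality). The only cosmetic difference is that you name the interpolation bound explicitly while the paper leaves it implicit.
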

With the input of Lemma \ref{lem:Xbound}, we can prove the following result. 
\begin{lemma} \label{lem:MatrixBound}
Let $\beta$ be as in \eqref{eq:defbeta}, $\Omega$ as in \eqref{eq:def-omg}, $C_\beta$ as in \eqref{eq:def-cbeta}, $  \epsilon \in (0,10^{-2})$, $ A,B\subset \{1,\dots,N\}$ disjoint with $ |A\cup B| = k$ and let $S^{[A,B]}$ be as in \eqref{eq:def-H}. Assume that $\beta$ is sufficiently small and set
        \begin{equation*}
        \Omega_{ N-k-1,\epsilon}  = \big\{  a_{N -k-1} \,C_\beta < \epsilon \big\}.
        \end{equation*}
Then, there exists some universal $C>0$ such that in $\Omega \cap \Omega_{ N-k-1,\epsilon} $, we have
        \begin{equation*}
            \big\| S^{[A,B]} \big\| \le C \bn^2\exp\left( C\bn\right) \max(1, a_{[A,B]})^2  .
        \end{equation*}
\end{lemma}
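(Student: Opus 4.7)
The plan is to start from the explicit identity (verified by direct computation at $\sigma_j = \pm 1$)
\begin{equation*}
    h_j^{[A,B]} = v_j^{[A,B]} \bigl(1 + e^{-2\sigma_j B_j^{[A,B]}}\bigr), \qquad v_j^{[A,B]} := \tanh(B_j^{[A,B]}) - m_j^{[A,B]},
\end{equation*}
which makes the $\sigma_j$-dependence of $h_j$ fully explicit. A short computation yields $\langle h_j\,|\,\sigma_{-j}\rangle_{[A,B]} = 2 v_j$, giving the decomposition $h_j = 2 v_j + \tilde h_j$ with $\tilde h_j := v_j(e^{-2\sigma_j B_j} - 1)$ conditionally centered given $\sigma_{-j}$. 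I would then test against a unit vector $c$ to write $\|S^{[A,B]}\| = \sup_{\|c\|_2 = 1}\langle H;H\rangle_{[A,B]}$ with $H := \sum_i c_i(1-m_i^2)^{-1/2} h_i$, and apply the spectral gap of $[A,B]$ to reduce the problem to bounding $D_{[A,B]}(H)$.

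The discrete derivative $\partial_k h_i$ splits into two cases. For $k = i$, a direct calculation gives $\partial_i h_i = -\sigma_i v_i \sinh(2 B_i)$, so the $\cosh^{-2}(B_k)$ weight of the Dirichlet form exactly cancels the $\cosh^2(B_k)$ growth and leaves the tractable quantity $4 v_k^2 \sinh^2(B_k)$. For $k \neq i$, $h_i$ depends on $\sigma_k$ only through $B_i$, and since $|B_i(\sigma) - B_i(\hat\sigma_k)| \leq 10\beta$ in $\Omega$, a discrete mean-value expansion yields
\begin{equation*}
    \partial_k h_i = \sech^2(B_i)\,\partial_k B_i \cdot (1 + e^{-2\sigma_i B_i}) - 2 \sigma_i v_i e^{-2\sigma_i B_i}\partial_k B_i + O(\beta^2)(\partial_k B_i)^2,
\end{equation*}
whose leading terms have the structure of the matrix $\partial B$ left- and right-multiplied by $O(1)$-sized diagonal matrices in $\Omega$, while the $O(\beta^2)$ remainder is entrywise dominated by $X_2$, whose operator norm is at most $25\beta^2$ by Lemma \ref{lem:Xbound}.

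To obtain the target bound $C\beta^2 e^{C\beta}\max(1,a_{[A,B]})^2$, I would split $D_{[A,B]}(H) \leq 2 D_{[A,B]}(H^{\text{avg}}) + 2 D_{[A,B]}(\tilde H)$ and treat each piece via a combined operator-norm and spectral-gap argument. The matrix $T^{\text{sec}}$ with entries $\sech^2(B_i)\partial_k B_i$ factors as $\partial B \cdot \mathrm{diag}(\sech^2(B_i))$ and has operator norm $\leq 5\beta$ in $\Omega$. Using this, $D_{[A,B]}(H^{\text{avg}})$ reduces to the variance of $\sum_i d_i \tanh(B_i)$ (up to an $O(\beta^2)$ remainder from Lemma \ref{lem:Xbound}), and a \emph{second} application of the spectral gap bounds this variance by $a_{[A,B]}\|T\|^2 \leq C a_{[A,B]}\beta^2$; this is where the second power of $a_{[A,B]}$ in the final bound enters. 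The $\tilde H$ contribution is treated similarly, with the conditional mean-zero property of $\tilde h_j$ together with direct summation over $\sigma_j$ reducing the key covariance to first-order quantities in $\partial B$. The $e^{C\beta}$ factor emerges when Lemma \ref{bd-m} is used to swap expectations $\langle e^{-K\sigma_i B_i}\rangle_{[A,B]}$ for $\cosh(K\langle B_i\rangle_*)$-type quantities.

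The principal obstacle is avoiding dimension-dependent factors: a careless Cauchy--Schwarz produces sums like $\sum_i(1-m_i^2)^{-1}$ that grow linearly in $N-k$. These are avoided by using the operator-norm bounds for $\partial B$ and $X_2$ rather than entrywise estimates, and by exploiting the delicate cancellation between the $\cosh^{-2}(B_k)$ weight in $D_{[A,B]}$ and the Lipschitz factors $\sech^2(B_i)$ or $e^{-2\sigma_i B_i}$ coming out of $\partial_k h_i$. The structural observation behind this cancellation is that $v_i \cosh(B_i) = \sinh(B_i) - m_i \cosh(B_i)$ vanishes at $B_i = \langle B_i\rangle_*$ up to $O(\epsilon)$ corrections controlled by Lemma \ref{bd-m}, so that $S_{ii}$ itself is of order $a_{[A,B]}\beta^2$ rather than $(1-m_i^2)^{-1}$, and this cancellation is propagated to all off-diagonal entries through the matrix form of the arguments.
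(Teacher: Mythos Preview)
Your overall architecture matches the paper's: write $\|S^{[A,B]}\|=\sup_{\|c\|_2=1}\langle H;H\rangle_{[A,B]}$, apply the spectral gap once to get $a_{[A,B]}D_{[A,B]}(H)$, compute $\partial_k h_i$, split diagonal/off-diagonal, control the off-diagonal via the operator norm of a $\partial B$-type matrix (Lemma~\ref{lem:Xbound}), and use Lemma~\ref{bd-m} to absorb the $(1-m_i^2)^{-1}$ weights. Your factorisation $h_j=v_j(1+e^{-2\sigma_jB_j})$ and the split $h_j=2v_j+\tilde h_j$ is a clean reframing the paper does not use, but it produces the same two pieces.

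The gap is that you have misplaced the second spectral-gap application. In your decomposition, $H^{\text{avg}}=2\sum_i d_i v_i$ with $d_i=c_i(1-m_i^2)^{-1/2}$ has \emph{no} diagonal contribution to its Dirichlet form, since $\partial_i v_i=0$. One bounds $D_{[A,B]}(H^{\text{avg}})$ directly: $|\partial_k\tanh(B_i)|\le C\sech^2(B_i)|\partial_kB_i|$, a pointwise operator-norm bound gives $\sum_k(\partial_kH^{\text{avg}})^2\le C\beta^2\sum_i d_i^2\sech^4(B_i)$, and $\langle\sech^2(B_i)\rangle_{[A,B]}\le 1-m_i^2$ absorbs the weight. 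No second gap is used here; your sentence ``$D_{[A,B]}(H^{\text{avg}})$ reduces to the variance of $\sum_i d_i\tanh(B_i)$, and a second application of the spectral gap bounds this variance'' is backwards (the gap bounds variances by Dirichlet forms, not the reverse). The second spectral gap is in fact needed for the \emph{diagonal} term $\partial_i h_i=-\sigma_i v_i\sinh(2B_i)$, which in your split sits in $\tilde H$. After the Dirichlet weight cancels one $\cosh^2$, one is left with $(1-m_i^2)^{-1}\langle\sinh^2(B_i)\,v_i^2\rangle$, and Cauchy--Schwarz produces $\langle v_i^4\rangle^{1/2}$; the paper extracts the missing $\beta^2$ by applying the spectral gap \emph{again} to $\langle v_i;v_i\rangle$ and $\langle v_i^2;v_i^2\rangle$, using $|\partial_k v_i|\le C\sech^2(B_i)|\partial_kB_i|$ and $I_2^{[A,B]}\le(5\beta)^2$. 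This is where the extra $a_{[A,B]}^{1/2}$ (hence $\max(1,a_{[A,B]})^2$) enters. You call $4v_k^2\sinh^2(B_k)$ ``tractable'' but never say how to get $\beta^2$ out of it, and you misattribute the mechanism to $H^{\text{avg}}$. Also, the off-diagonal cancellation you describe between the Dirichlet weight $\cosh^{-2}(B_k)$ and the Lipschitz factor $\sech^2(B_i)$ mixes indices: those live on different coordinates, and the paper simply drops $\cosh^{-2}(B_k)\le 1$ there; the weight $(1-m_j^2)^{-1}$ is instead cancelled by showing $\langle e^{-4\sigma_jB_j}(m_j+\sigma_j)^2\rangle_{[A,B]}\le C(1-m_j^2)$ via Lemma~\ref{bd-m}.
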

\begin{proof}
We split the bound into two main steps, based on
        \begin{equation} \label{eq:applyspecgap-1}
        \begin{split}
       0&\leq  \sum_{i,j \not \in A \cup B} c_i S^{[A,B]}_{ij} c_j = \Ex{ \sum_{i \not \in A \cup B }c_i \frac{h_i^{[A,B]}}{\sqrt{1-(m^{[A,B]}_i)^2}}; \sum_{j \not \in A \cup B }c_j \frac{h_j^{[A,B]}}{\sqrt{1-(m^{[A,B]}_j)^2}} }_{[A,B]}\\
          & \le a_{[A,B]} \sum_{i \not \in A \cup B} \bigg\langle \cosh^{-2}(B_i^{[A,B]}) \bigg( \sum_{ j \not \in A \cup B}    \frac{\partial_i h_j^{[A,B]}. }{ \sqrt{1-(m^{[A,B]}_j)^2} }c_j\bigg)^2 \bigg\rangle_{[A,B]}\\
         &\le 2 a_{[A,B]} \sum_{i \not \in A \cup B} \bigg\langle \cosh^{-2}(B_i^{[A,B]})\bigg(\sum_{j \not \in A \cup B\cup\{i\}}   \frac{\partial_i h^{[A,B]}_j}{\sqrt{1-(m^{[A,B]}_j)^2}}c_j\bigg)^2 \bigg\rangle_{[A,B]}\\
         & \hspace{0.5cm}+ 2 a_{[A,B]} \sum_{i \not \in A \cup B} \frac{c_i^2}{{1-(m^{[A,B]}_i)^2}} \Big\langle \cosh^{-2}(B_i^{[A,B]}) (\partial_i h^{[A,B]}_i)^2 \Big\rangle_{[A,B]}\\
         &=: \text{T}_1 + \text{T}_2
        \end{split}\end{equation}
for $\textbf{c}\in \mathbb{R}^{N- k} $. We bound the contributions $ \text{T}_1$ and $\text{T}_2$, defined on the r.h.s. in \eqref{eq:applyspecgap-1}, separately. To this end, a straightforward computation yields first of all that 
        $$\partial_i h_j^{[A,B]}(\sigma) =
        \begin{cases}
        - \frac{1}{2}e^{-2\sigma_j B_j^{[A,B]}} ( m^{[A,B]}_j + \sigma_j) X^{[A,B]}_{ij},& i \not = j, \\
        -\sigma_j \sinh \big(2B_j^{[A,B]}\big) \big(\tanh\big(B_j^{[A,B]}\big)-m^{[A,B]}_j\big),&i  = j,
        \end{cases}$$
 where $X^{[A,B]} = \big( X^{[A,B]}_{ij}\big)_{1\leq i,j\leq N-k}$ is defined by
 	\begin{equation*} 
     	X^{[A,B]}_{ij}= 1 - \exp\big( 2\sigma_{j} \partial_i B_j^{[A,B]}\big) = \sum_{q=1}^\infty \frac{2^q}{q!} \big( \sigma_{j} \partial_i B_j^{[A,B]}\big)^q.
	\end{equation*}
 In particular, Lemma \ref{lem:Xbound} implies that in $\Omega$, we have that
        \[ \big\| X^{[A,B]}\big\| \leq \sum_{q=1}^\infty \frac{2^q}{q!}  \big\| X_q^{[A,B]}\big\| \leq C\beta \exp (C\beta) \]
for some universal $C>0$ and thus 
        \[\begin{split}
            \text{T}_1 & \leq   C\beta^2 \exp (C\beta)a_{[A,B]} \!\!\!\sum_{j \not \in A \cup B\cup\{i\}} \!\!\! \big( 1-\big(m^{[A,B]}_j\big)^2\big)^{-1}   \big\langle e^{-4\sigma_j B_j} ( m^{[A,B]}_j + \sigma_j)^2\big\rangle_{[A,B]} \, c_j^2.
        \end{split}\]
To control the r.h.s. further, we compute
        \[\begin{split}
            &  \big\langle e^{-4\sigma_j B_j^{[A,B]}} ( m^{[A,B]}_j + \sigma_j)^2\big\rangle_{[A,B]}  \\
            & =  \frac{ \big(1+ \big(m^{[A,B]}_j\big)^2\big)\big\langle \cosh\big( 3 B_j^{[A,B]}\big)  \big\rangle_{[A,B\cup \{j\}]}   - 2  m^{[A,B]}_j    \big\langle \sinh \big( 3 B_j^{[A,B]}\big)  \big\rangle_{[A,B\cup \{j\}]} }{ \big\langle \cosh\big(  B_j^{[A,B]}\big)  \big\rangle_{[A,B\cup \{j\}]} }\\
            & =\frac12 \frac{ \big(1 -  m^{[A,B]}_j \big)^2 \big\langle  e^{ 3 B_j^{[A,B]}}  \big\rangle_{[A,B\cup \{j\}]} + \big(1+  m^{[A,B]}_j )^2   \big\langle  e^{- 3 B_j^{[A,B]} }  \big\rangle_{[A,B\cup \{j\}]} }{ \big\langle \cosh\big(  B_j^{[A,B]}\big)  \big\rangle_{[A,B\cup \{j\}]} }. 
         \end{split}\]
Now, recalling once more that $m^{[A,B]}_j = \frac{ \langle \sinh (B_j^{[A,B]}  )    \rangle_{[A,B\cup\{j\}]}}{\langle \cosh (B_j^{[A,B]}  )    \rangle_{[A,B\cup\{j\}]}} $, a similar computation as in \eqref{eq:lm41aux2} and \eqref{eq:lm41aux3} using Lemma \ref{bd-m} shows that in $  \Omega \cap \big\{ a_{N-k-1,\epsilon} C_\beta < \epsilon\big\} $
        \[\big\langle e^{-4\sigma_j B_j^{[A,B]}} ( m^{[A,B]}_j + \sigma_j)^2\big\rangle_{[A,B]} \leq C   \big\langle \cosh^2 \big( B_j^{[A,B]}  \big)\big\rangle_{[A,B\cup\{j\}]}^{-1} \leq  C \big(1 -  \big(m^{[A,B]}_j \big)^2\big)  \]
for some constant $C>0$, that is independent of all parameters. Consequently, it holds true that $ \text{T}_1\leq C\beta^2 \exp (C\beta)a_{[A,B]} \| \textbf{c}\|_2^2$. 
 
Next, we consider the contribution $\text{T}_2$, defined in \eqref{eq:applyspecgap-1}. In this case, in order to extract a factor $\beta$, we need to apply the spectral gap inequality again.  Setting
		\[ u_i :=\tanh\big(B_i^{[A,B]}\big) -m^{[A,B]}_i, \]
we apply Cauchy-Schwarz and find
	\[\begin{split}
        &\Ex{\cosh^{-2}(B_i^{[A,B]})\big(\partial_i h_i^{[A,B]}\big)^2}_{[A,B]} \\
        &= \frac{ \big\langle 4\sinh^2\big(B_i^{[A,B]}\big)\cosh\big(B_i^{[A,B]}\big) (\tanh\big(B_i^{[A,B]}\big )-m^{[A,B]}_i\big)^2\big\rangle_{[A,B \cup\{i\} ]} } { \big\langle \cosh\big(B_i^{[A,B]}\big)  \big\rangle_{[A,B \cup\{i\}} }\\
        &\leq C \frac{ \big\langle \cosh^6\big( B_i^{[A,B]} \big) \big\rangle_{[A,B\cup\{i\} ]}^{1/2} }{\big\langle \cosh\big(B_i^{[A,B]}\big)  \big\rangle_{[A,B \cup\{i\}} }  \Ex{u_i^4}^{1/2}_{[A,B]}.
        \end{split}\]	
Then, since $ \langle u_i \rangle_{[A,B]} =0 $, we can bound $ \Ex{u_i^4}_{[A,B]} \leq \Ex{u_i^2;u_i^2}_{[A,B ]} + \Ex{u_i;u_i}_{[A,B]}^2$
and by the spectral gap inequality, we get
		\begin{equation*}
		\begin{split}
   		\Ex{u_i;u_i}_{[A,B]} & \leq a_{[A,B]} \sum_{k\not \in A\cup B}\Ex{   (\partial_k u_i)^2}_{[A,B]}, \\
		\Ex{u_i^2;u_i^2}_{[A,B]} & \leq 4 \, a_{[A,B]} \sum_{k\not \in A\cup B}\Ex{(\partial_k u_i )^2(u_i - \partial_k u_i )^2}_{[A,B]}\,.
		\end{split}
		\end{equation*}
Using the elementary bound $ |\tanh(x) - \tanh(y)|\leq 2  \cosh^{-2}(x) |x-y| e^{|x-y|}$ for $x,y\in \mathbb{R}$, we then obtain
		\[ \big| \partial_k u_i^{[A,B]} \big|\leq 2  \cosh^{-2}\big(B_i^{[A,B]} \big) \big|\partial_k B_i^{[A,B]} \big|  \exp\big( \big|\partial_k B_i^{[A,B]} \big| \big)\]
so that Lemma \ref{lem:Xbound} implies for $\beta$ small enough (so that $5\beta < 1$) that
		\[\begin{split}
		\Ex{u_i;u_i}_{[A,B]} & \leq 4 \,a_{[A,B]} \sum_{k\not \in A\cup B}\Ex{   \cosh^{-4}\big(B_i^{[A,B]} \big) \big|\partial_k B_i^{[A,B]} \big|^2  \exp\big( 2\big|\partial_k B_i^{[A,B]} \big| \big)}_{[A,B]}\\
		& \leq C \beta^2 \exp(C \beta )a_{[A,B]} \big\langle   \cosh^{-4}\big(B_i^{[A,B]} \big)\big\rangle_{[A,B]}\\
		& = C \beta^2 \exp(C \beta )a_{[A,B]}  \frac{ \big\langle   \cosh^{-3}\big(B_i^{[A,B]} \big)\big\rangle_{[A,B\cup \{i\} ]} }{ \big\langle   \cosh \big(B_i^{[A,B]} \big)\big\rangle_{[A,B\cup \{i\} ]}  }
		\end{split}\]
Analogously, we obtain that 
		\[\Ex{u_i^2;u_i^2}_{[A,B]}\leq  C \beta^2 \exp(C \beta )a_{[A,B]}  \frac{ \big\langle   \cosh^{-3}\big(B_i^{[A,B]} \big)\big\rangle_{[A,B\cup \{i\} ]} }{ \big\langle   \cosh \big(B_i^{[A,B]} \big)\big\rangle_{[A,B\cup \{i\} ]}  }\]
and combining this with the previous estimates, we conclude that 
		\[\begin{split}
		\text{T}_2 
		&\leq C a_{[A,B]}^{3/2} \sum_{i \not \in A \cup B} \frac{\beta^2 \exp(C \beta )c_i^2}{{1-(m^{[A,B]}_i)^2}} \frac{ \big\langle \cosh^6\big( B_i^{[A,B]} \big) \big\rangle_{[A,B\cup\{i\} ]}^{1/2} }{\big\langle \cosh\big(B_i^{[A,B]}\big)  \big\rangle_{[A,B \cup\{i\}]}^2 } \Ex{   \cosh^{-3}\big(B_i^{[A,B]} \big)}_{[A,B\cup\{i\}]} \\
		&\leq C\beta^2 \exp(C \beta ) \max(1,a_{[A,B]})^2  \| \textbf{c}\|_2^2
		\end{split}\]
Here, the second step follows from Lemma \ref{bd-m}, arguing as before. In conclusion, we have shown that for all $\textbf{c}\in \mathbb{R}^{N-k}$, we have that
		\[ 0\leq \langle \textbf{c},  S^{[A,B]}\textbf{c}\rangle_2 \leq C\beta^2  \exp(C \beta ) \max( 1, a_{[A,B]})^2   \| \textbf{c}\|_2. \]
for some universal $C>0$, i.e. $  \| S^{[A,B]}\| \leq C\beta^2  \exp(C \beta ) \max( 1, a_{[A,B]})^2 $. 
\end{proof}
  
\begin{proof}[Proof of Prop. \ref{thm:Dichotomy}]
We combine Lemmas \ref{lm:41} and \ref{lem:MatrixBound}, which implies directly the improved iteration bound in Proposition \ref{thm:Dichotomy}. 
\end{proof} 

\section{Proof of Theorem \ref{thm:main}} \label{sec:pfmain}
In this section, we combine Prop. \ref{continuity} and Prop. \ref{thm:Dichotomy} to prove Theorem \ref{thm:main}. 

\begin{proof}[Proof of Theorem \ref{thm:main}]

W.l.o.g., we can assume that $ \epsilon$ is sufficiently small. In particular, we assume in the following that $ \epsilon \in (0,10^{-2})$ so that we can apply the main results of the previous sections.

We proceed inductively and before giving the details, let us first of all show that $a_1=1$. To see this, let $A, B\subset \{1,\dots, N\}$ be disjoint such that $ |A\cup B| =N-1$ and let $ i \not \in A\cup B$. Then, the Gibbs measure $ \langle\cdot \rangle_{[A,B]}$ is simply the coin tossing measure associated to the external field $ B_i^{[A,B]} $. In other words, it is determined by the probabilities $ \langle \textbf{1}_{\sigma_i = \pm 1} \rangle_{[A,B]} = \frac12  \pm \frac12 \tanh\big( B_i^{[A,B]} \big) $. Set $ p_i:= \frac12 + \frac12 \tanh\big( B_i^{[A,B]} \big)$ and let $ f: \{\pm1\}\to \mathbb{R}$ be a function s.t. $ f(1)\neq f(-1)$. Moreover, assume without loss of generality that $ \langle f \rangle_{[A,B]} =0$, i.e. that 
		$$ f(-1) = - \frac{p_i}{1-p_i} f(1).$$ 
Then, we find that
		\[\begin{split}
		\langle f^2 \rangle_{[A,B]} = p_i f(1)^2 + (1-p_i)f(-1)^2 &=  \frac{p_i}{1-p_i} f(1)^2 \\
		&= p_i (1-p_i) \big\langle  \big( f(1)- f(-1)\big)^2 \big\rangle_{[A,B]} =D(f) 
		\end{split}\]
and thus $ a_{[A,B]} = 1$. Since $A, B\subset \{1,\dots, N\}$ were arbitrary, this means that $a_1=1$. 

Now, let $\epsilon >0 $ be sufficiently small. We choose the inverse temperature $ \beta =  \epsilon^{3/4} \ll 1 $ so that  
		\[ \frac{\beta}{\epsilon} = \epsilon^{-1/4} \gg 1  \hspace{0.5cm} \text{ and } \hspace{0.5cm}  \frac{\beta}{\epsilon^{1/2} } = \epsilon^{1/4} \ll 1.   \]
In particular, we can assume in the following w.l.o.g. that $\epsilon$ is so small that 
		\[\begin{split}
		(i) &\hspace{0.5cm} \frac{ C_\beta^{2} }{C_1 \epsilon^{2}  \beta^2 } = \mathcal{O}(1)\epsilon^{-1/2} > 2 \log 4, \\
		(ii) &\hspace{0.5cm} (1+  40\epsilon^{1/4} ) \,C_\beta  = \mathcal{O}(1)\, \epsilon^{3/2}   < \epsilon  \hspace{0.5cm} \text{ and } \\
		(iii) &\hspace{0.5cm} 25 (1+40\epsilon^{1/4} )^2  \frac{ C_2 \beta^2 e^{C_2 \beta } }{\epsilon }  = \mathcal{O}(1) \epsilon^{1/2} < \epsilon^{1/4}, 
		\end{split}\]
where each $\mathcal{O}(1)>0$ is bounded by some universal constant, and the constants $C_1 $ and $C_2$ refer to the universal constants from Prop. \ref{continuity} and Prop. \ref{thm:Dichotomy}, respectively.

Next, the event $ \widetilde \Omega$ of high probability we use for the proof of Theorem \ref{thm:main} is defined by 
        \[\begin{split} \widetilde \Omega:&= \Omega\cap  \bigcap_{l=0}^{N-2} \Big\{  a_{N-l-1} C_\beta < \epsilon \text{ and } a_{N-l}   >   5a_{N-l-1} \Big\}^c\\
        &= \Omega\cap  \bigcap_{l=0}^{N-2} \Big\{  a_{N-l-1} C_\beta \geq  \epsilon \text{ or } a_{N-l}   \leq   5a_{N-l-1} \Big\},
        \end{split}\]
where $\Omega$ denotes the set defined in \eqref{eq:def-omg}. In particular, according to Prop. \ref{continuity} combined with condition $(i)$ from above as well as the fact that $\mathbb{P}(\Omega) \geq 1- e^{-N}$, it holds true that  
        \[ \mathbb{P} (\widetilde\Omega  )   \geq  1 - Ce^{-cN} \]
for universal constants $c, C>0$. 

Now, let us prove the following induction: if $a_{N-k-1}\leq 1 +  40\epsilon^{1/4}$ in $\widetilde \Omega$ for some $0\leq k \leq N-2$, then we prove that this implies $ a_{N-k} \leq 1 +  40 \epsilon^{1/4}$ in $\widetilde \Omega$. Since $a_1 = 1$, the inductive assumption is clearly satisfied for $k=N-2$. Hence, Theorem \ref{thm:main} follows from the proof of the inductive step.

To prove the inductive step, by the inductive assumption on $a_{N-k-1}$ and condition $(ii)$ from above, we have that $a_{N-k-1} C_\beta < \epsilon $ in $\widetilde\Omega$. By definition of $\widetilde\Omega$, this implies that
        $$ a_{N-k} \leq 5 a_{N-k-1} < 5\,(1+40\epsilon^{1/4})  $$ 
in $\widetilde\Omega$. Therefore, Proposition \ref{thm:Dichotomy} implies that
		\[\begin{split}
		\bigg(1 -  \frac{ 25 C_2\beta^2 e^{C_2\beta}\,  (1+40\epsilon^{1/4})^2 }{\epsilon  \, (N-k)} \bigg)  a_{N-k} & \leq \Big(1- \frac{1}{N-k}\Big)a_{N-k-1} + \frac{ (1+4\epsilon )^5}{N-k}\\
		&\leq \Big(1- \frac{1}{N-k}\Big)a_{N-k-1} + \frac{1 +  32 \epsilon^{1/4} }{N-k}.
		\end{split} \]
Combining this with condition $(iii)$ from above and using again $ a_{N-k} \leq 5 a_{N-k-1} $, we obtain that
		\[\begin{split}
		a_{N-k} & \leq \Big(1+ \frac{5 \epsilon^{1/4}}{N-k}- \frac{1}{N-k}\Big)a_{N-k-1} + \frac{1 +  32 \epsilon^{1/4} }{N-k}  \\
		&\leq \Big(1 + \frac{5 \epsilon^{1/4}}{N-k}- \frac{1}{N-k}\Big) ( 1+ 40\epsilon^{1/4}) + \frac{1 +  32 \epsilon^{1/4} }{N-k}   \\
		& = 1 + 40\epsilon^{1/4} +  \frac{ (5\epsilon^{1/4})(40\epsilon^{1/4})  - 3\epsilon^{1/4}}{N-k}  \leq 1 + 40\epsilon^{1/4},
		\end{split}\]
for all $\epsilon >0 $ sufficiently small. This proves the inductive step and therefore, arriving at $ k=0$, we have shown that on a set of probability at least $1- C e^{-cN}$, we have that $  a_{H_N}   \leq 1 + 40\epsilon^{1/4} $.
\end{proof}

\section{High probability Estimates}
\label{sec:est}

In this section, we complete our arguments by providing the high probability estimates related to the set $\Omega$, defined in \eqref{eq:def-omg}, Lemma \ref{bd-m} as well as Lemma \ref{lem:Xbound}.

\begin{Lem} \label{lem:partialbnd}
Let $\Omega$ be defined as in \eqref{eq:def-omg}. Then, $\mathbb{P}(\Omega) \geq 1 - e^{-N}$.
\end{Lem}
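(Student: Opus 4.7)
The plan is to exploit that each entry $\partial_i B_j^{[A,B]}(\sigma)$ is a centered Gaussian linear functional of the disorder $(g_{i_1\ldots i_p})$, so that for fixed $(A,B,\sigma_A,\sigma)$ the matrix $M(\sigma) = \bigl(\partial_i B_j^{[A,B]}(\sigma)\bigr)$ is a Gaussian matrix; I would bound its operator norm via an $\epsilon$-net on the sphere combined with Gaussian concentration, and then union bound over the remaining deterministic parameters.

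First I would simplify the matrix entries. Since $B_j = \sigma_j\partial_j H_N$ is independent of $\sigma_j$, one has $\partial_j B_j \equiv 0$, and for $i\neq j$, $\partial_i B_j^{[A,B]}(\sigma) = \sigma_j\, \partial_i \partial_j H_N^{[A,B]}(\sigma)$. Writing the $p$-spin part of $H_N^{[A,B]}$ as a tensor contraction of the i.i.d.\ standard Gaussian tensor $(g_{i_1\ldots i_p})_{i_l\in B^c}$ with $p$ copies of $\sigma$, the mixed derivative $\partial_i\partial_j H_N^{[A,B],(p)}$ decomposes into $p(p-1)$ pieces corresponding to the ordered placements of $(i,j)$ in the $p$ tensor slots; different placements involve disjoint families of $g$'s, hence independent contributions. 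For unit vectors $u,v\in\mathbb{R}^{N-|A\cup B|}$ (extended by zero to indices in $A$), using the elementary identity $\mathrm{Var}\bigl(\sum g_{i_1\ldots i_p}(w_1)_{i_1}\cdots (w_p)_{i_p}\bigr) \leq \prod_k\|w_k\|^2$ (valid as an upper bound even when tuples with coinciding indices are discarded) for each placement and summing the independent pieces, I obtain
\[
\mathrm{Var}\bigl(\langle u, M(\sigma) v\rangle\bigr) \leq \sum_{p\geq 2}\frac{\beta_p^2\, p(p-1)\, (N-|B|)^{p-2}}{N^{p-1}} \leq \frac{1}{N}\sum_{p\geq 2}p^2\beta_p^2 \leq \frac{\bn^2}{(2\log 2)\,N},
\]
where the last inequality uses $\bn^2 \geq \sum_p p^3(\log p)\beta_p^2$ together with the elementary $p\log p \geq 2\log 2$ for $p\geq 2$.

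Finally, by the Gaussian tail bound and the standard net inequality $\|M\|\leq (1-2\epsilon)^{-1}\sup_{u,v\in \mathcal{N}_\epsilon}|\langle u, Mv\rangle|$ with $|\mathcal{N}_\epsilon|\leq (1+2/\epsilon)^{N-|A\cup B|}$, I would union-bound over disjoint $(A,B)$ (at most $3^N$ choices, since each index is in $A$, $B$, or neither), over $(\sigma_A,\sigma)$ (at most $2^N$ combined), and over $(u,v)\in\mathcal{N}_\epsilon\times\mathcal{N}_\epsilon$. Choosing for instance $\epsilon = 1/10$, the combinatorial factor is of the form $e^{cN}$ for an explicit universal $c$, and the Gaussian tail $\exp(-t^2 N\log 2/\bn^2)$ at a threshold $t$ slightly below $5(1-2\epsilon)\bn$ dominates $e^{(c+1)N}$, yielding $\mathbb{P}(\Omega^c)\leq e^{-N}$. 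The main step is the variance bound, which crucially exploits the $\sqrt{p^3\log p}$ weighting in the definition of $\bn$; once this is in hand the $\epsilon$-net and union bound are routine, although the constants $3$, $2$, $\log 2$, $1/\epsilon$ must be tracked carefully to match the explicit threshold $5\bn$.
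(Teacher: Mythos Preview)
Your variance bound has a genuine gap. The claim that ``different placements involve disjoint families of $g$'s, hence independent contributions'' is correct for \emph{fixed} $(i,j)$, but fails once you sum over $(i,j)$ to form $\langle u,M(\sigma)v\rangle$. For a fixed placement $(l_1,l_2)$, the contribution to $\langle u,Mv\rangle$ is $G(w_1,\ldots,w_p)$ with $w_{l_1}=\sigma\odot u$, $w_{l_2}=v$, $w_m=\sigma$ otherwise, where $G$ is the \emph{same} Gaussian $p$-tensor for every placement; hence the $p(p-1)$ pieces are correlated. The cross--covariance between two placements equals $\prod_m\langle w_m,w_m'\rangle$, and for the worst-case choice $u=v=N^{-1/2}\mathbf 1$, $\sigma=\mathbf 1$, every such product is of order $N^{p-2}$, the same as the diagonal terms. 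A direct computation for $p=3$ confirms this: one gets $\mathrm{Var}(\langle u,M^{(3)}v\rangle)=36\beta_3^2 N^{-1}(1+o(1))$, not $6\beta_3^2/N$. In general the correct bound is $\mathrm{Var}(\langle u,M^{(p)}v\rangle)\leq (p(p-1))^2\beta_p^2/N$, so the total variance is controlled by $N^{-1}\sum_p p^4\beta_p^2$. This cannot be absorbed into $\bn^2$: if $\beta_p$ is supported on a single $p\geq 3$, the ratio $(p(p-1))^2/(p^3\log p)\sim p/\log p$ is unbounded, and your final union bound fails to reach the threshold $5\bn$ for any net parameter.

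The paper avoids this by \emph{not} fixing $\sigma$ and netting only two slots. Instead it observes that the matrix norm is dominated, uniformly in $(A,B,\sigma_A,\sigma)$, by $\sum_p\beta_p N^{-1/2}|\!|\!|G_p|\!|\!|$, where $|\!|\!|G_p|\!|\!|$ is the injective tensor norm of a single Gaussian $p$-tensor (with entry variance $p(p-1)$), and then invokes the tail estimate of Tomioka--Suzuki to get $N^{-1/2}|\!|\!|G_p|\!|\!|\leq 5\sqrt{p^3\log p}$ with probability $\geq 1-e^{-N\log(2p)}$. The $\log p$ in the definition of $\bn$ is exactly the price of netting all $p$ tensor slots (with $\epsilon\sim 1/p$), and this is what makes the union bound over $p$ close. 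Your two-slot net trades the $p\log p$ net cost for a $p^2$ correlation cost, which is strictly worse; to rescue your approach you would need to replace $\bn$ by $\sum_p p^2\beta_p$, or else revert to the full tensor-norm argument.
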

\begin{proof}
We recall from \eqref{eq:BjAB} that
		\[\begin{split}
		B_j^{[A,B]}(\sigma) = &\;\sum_{p\geq2} \frac{\beta_p}{N^{(p-1)/2}}\bigg( \sum_{  i_2,\ldots,i_p \in B^c,   } g_{j i_2\ldots i_p} \sigma_{i_2} \ldots \sigma_{i_p} +\ldots  \\
                &\hspace{2.5cm}\ldots+ \!\!\!\! \sum_{  i_1,\ldots,i_{p-1} \in B^c} g_{i_1 i_2\ldots i_{p-1}j} \sigma_{i_1}\sigma_{i_2} \ldots \sigma_{i_{p-1}}  \bigg) +   \eta_j 
                \end{split}\]
for $\sigma \in \Sigma_{N-|A\cup B|}$ and $\sigma_A$ understood to be fixed. It is then simple to see that an explicit expression for $\sigma_i \partial_i B_j^{[A,B]}$ is given by
		\begin{equation*} 
        		\sigma_i \partial_i B_j^{[A,B]} (\sigma) = \sum_{p\geq 2} \frac{\beta_p}{\sqrt{N}} \sum_{k_1, k_2, \ldots, k_{p-2} \in B^c} g_{k_1k_2\ldots k_{p-2}}^{ij} \frac{\sigma_{i_1}}{\sqrt{N}}\ldots \frac{\sigma_{i_2}}{\sqrt{N}}\ldots \frac{\sigma_{i_{p-2}}}	
		{\sqrt{N}},
    		\end{equation*}
where for each $  i,j \in \{ 1,\dots, N-|A\cup B| \}$, the coupling $g_{k_1k_2\ldots k_{p-2}}^{ij}$ is a sum of $p(p-1)$ i.i.d. standard Gaussian random variables (if the indices $k_u\neq k_s$ for $1\leq u\neq s\leq p-2$ are all distinct; otherwise $g_{k_1k_2\ldots k_{p-2}}^{ij} =0$). Moreover, the couplings $g_{k_1k_2\ldots k_{p-2}}^{ij}$ are independent over the superscripts $  i,j \in \{ 1,\dots, N-|A\cup B| \}$. We can view this as 
		\[\begin{split}
		\sigma_i \partial_i B_j^{[A,B]} (\sigma)=  \sum_{p=2}^\infty \frac{\beta_p}{\sqrt{N}} G_p (\tau , \tau,  \ldots , \tau , \mathbf e_i , \mathbf e_j), 
		\end{split}\]
 where $G_p:(\mathbb{R}^N)^p\to \mathbb{R}$ denotes the multilinear map that is defined through \linebreak $(G_p)_{k_1k_2\dots k_p}:=  g_{k_1,\ldots,k_{p-2}}^{k_{p-1} k_p}$, the vector $ \tau \in \{ \textbf{y}\in \mathbb{R}^N: \|\textbf{y}\|_2\leq 1\}$ has components $ \tau_i = 0$ if $i\in B$ and $\tau_i = N^{-1/2} \sigma_i$ if $i\in \{1,\dots,N\}\setminus B$ and $(\textbf{e}_k)_{k=1,\dots, N}$ denotes the standard basis of $\mathbb{R}^N$. In particular, we have that
 		\[\begin{split}
		\sup_{ \substack{ A,B\subset \{1,\dots, N\}: \\A\cap B=\emptyset  }}\sup_{\sigma_A\in \Sigma_{|A|} } \sup_{\sigma \in \Sigma_{N-|A\cup B|} } \| \partial_i B_j^{[A,B]} (\sigma)\| \leq \sum_{p=2}^\infty  \frac{\beta_p}{\sqrt{N}}  |\!|\!| G_p  |\!|\!|, 
		\end{split}\]
 where 
    		\[  |\!|\!| G_p  |\!|\!| := \sup_{ \substack{  \textbf{y}_1,\ldots,\textbf{y}_p \in \mathbb{R}^N: \\ \| \textbf{y}_i\|_2 \leq  1 \,\forall i=1,\dots, p  }} G_p(\textbf{y}_1\otimes \textbf{y}_2...\otimes \textbf{y}_p). \]
According to \cite[Theorem 1]{TS}, we have that 
    $$ \P\Big( \Big\{  N^{-1/2}   |\!|\!| G_p  |\!|\!|  >  \sqrt{p(p-1)[8 \log(p/\log(3/2)) p + \log t+\log 2 ]} \Big\} \Big) \leq e^{ - N \log t }$$
for any $t > 1$. Choosing $t = 2p $ for $p\geq 2$, this implies 
		\[ \P\big( \big\{  N^{-1/2}   |\!|\!| G_p  |\!|\!|  >  5 \sqrt{ p^3 \log p }  \big\} \big) \leq e^{ - N \log 2p }. \] 
Finally, since $   \sup_{ \substack{ A,B\subset \{1,\dots, N\}: \\A\cap B=\emptyset  }}\sup_{\sigma_A\in \Sigma_{|A|} } \sup_{\sigma \in \Sigma_{N-|A\cup B|} } \| \partial_i B_j^{[A,B]} (\sigma)\|    > 5\beta $ for $\beta = \sum_{p=2}^\infty  \sqrt{p^3 \log p } \beta_p	$, defined in \eqref{eq:defbeta}, implies that $N^{-1/2}   |\!|\!| G_{p_*}  |\!|\!|  >  5 \sqrt{ p_*^3 \log p_* }$ for some $p_*\geq 2$, we conclude that $\mathbb{P}(\Omega) \geq 1- e^{-N \log 4} \sum_{p=2}\big(\frac{2}{p}\big)^N \geq 1- e^{-N} $. 		
\end{proof}

We continue with the proof of Lemma \ref{lem:Xbound}. 
\begin{proof}[Proof of Lemma \ref{lem:Xbound}]
Assuming the first claim $ I_q^{[A,B]} \leq (5\beta )^q$ for $q\geq 2$ for the moment, the bounds on the matrices $X_q^{[A,B]}$ are simple. Indeed, for $q=1$, we have $X_{1,ij}^{[A,B]} = \partial_i B_j^{[A,B]}$, so $ \| X_1^{[A,B]}\| \leq 5\beta  $ in $\Omega$, by definition of $\Omega$. For $q\geq 2$, on the other hand, we have that $ \| X_q^{[A,B]}\|\leq I_q^{[A,B]}$ by standard matrix properties and hence the claim follows if $ I_q^{[A,B]} \leq (5\beta )^q$ for $q\geq 2$. To this end, notice for $q=2$
		\[\begin{split}
		I_2^{[A,B]}  &= \max_{1 \leq i \leq N-|A\cup B|} \max \Big (  \big[\big(X_{1}^{[A,B]}\big)^t\big(X_{1}^{[A,B]}\big) \big]_{ii},  \big[\big(X_{1}^{[A,B]}\big)\big(X_{1}^{[A,B]}\big)^t \big]_{ii}\Big) \\
		& \leq  \| X_1^{[A,B]}\|^2 \leq (5\beta)^2
		\end{split}\]
and for $q  > 2$ that $I_q \leq I_{2}  \max_{ 1\leq i,j \leq N-|A\cup B| } |\partial_i B_j^{[A,B]} |^{q-2}\leq (5\beta)^q$ in $\Omega$. 
\end{proof}
Finally, it remains to prove Lemma \ref{bd-m} which follows as a simple corollary from the next Lemma. 
\begin{Lem} \label{lem:B-Bbar}
Let $\epsilon \in (0,10^{-2})$ and $K \in \mathbb R$. Suppose that $A, B \subset \{1,\dots, N\}$ are disjoint and let $j \not \in A\cup B$. Then, if $\beta$ is sufficiently small (depending on $K$) and if
    \begin{equation*}
     C_{K,\beta}a_{[A,B \cup \{j\}]} :=     (20K\bn)^{2}\exp(20K \bn)a_{[A,B \cup \{j\}]}< \epsilon\,,
    \end{equation*}
we have in $\Omega$ that
    \begin{equation*}
        \big \langle \exp(KB_j^{[A,B]}) \big\rangle_{[A,B]} \leq (1 + 4 \epsilon)\exp\big(K \langle B_j^{[A,B]}\rangle_{[A,B\cup \{j\}]}\big).
    \end{equation*}
\end{Lem}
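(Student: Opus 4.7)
My plan is to reduce the claim to a Herbst-type cumulant bound for $B_j^{[A,B]}$ under the reduced measure $\mu_* := \langle \cdot \rangle_{[A, B \cup \{j\}]}$, and then transfer it to $\langle \cdot \rangle_{[A,B]}$ via the identity
\[
\langle e^{K B_j^{[A,B]}} \rangle_{[A,B]} \;=\; \frac{\langle e^{K B_j^{[A,B]}} \cosh(B_j^{[A,B]})\rangle_{\mu_*}}{\langle \cosh(B_j^{[A,B]})\rangle_{\mu_*}}\,,
\]
which holds because $B_j^{[A,B]}$ does not depend on $\sigma_j$ (the vanishing of couplings with a repeated index enforces this). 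Writing $\cosh(x) = (e^x+e^{-x})/2$ turns the numerator into $\tfrac12\bigl(\langle e^{(K+1)B_j^{[A,B]}}\rangle_{\mu_*} + \langle e^{(K-1)B_j^{[A,B]}}\rangle_{\mu_*}\bigr)$, so the entire proof reduces to upper bounds on $\langle e^{t B_j^{[A,B]}}\rangle_{\mu_*}$ for $|t| \le |K|+1$.

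The MGF bound would come from an Aida--Stroock/Herbst iteration driven by the Poincar\'e inequality with constant $a_{[A,B\cup\{j\}]}$. Applied to $f_t := e^{t B_j^{[A,B]}/2}$, together with the elementary inequality $|e^a-e^b|^2 \le (a-b)^2 e^{a+b+|a-b|}$ and the $\Omega$-bound $\sum_i |\partial_i B_j^{[A,B]}|^2 \le 25\beta^2$ provided by Lemma~\ref{lem:Xbound}, one obtains a pointwise bound $D_{[A,B\cup\{j\}]}(f_t) \le C t^2\beta^2 e^{Ct\beta}\langle e^{t B_j^{[A,B]}}\rangle_{\mu_*}$. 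Inserted into the spectral gap inequality, this yields
\[
\langle e^{t B_j^{[A,B]}}\rangle_{\mu_*}\bigl(1 - \alpha(t)\bigr) \;\le\; \langle e^{t B_j^{[A,B]}/2}\rangle_{\mu_*}^{2},\qquad \alpha(t) := C t^2\beta^2 a_{[A,B\cup\{j\}]} e^{Ct\beta}\,,
\]
and iterating $t \mapsto t/2$ telescopes (using $\alpha(t/2^k)\le 4^{-k}\alpha(t)$ for the convergence of the geometric series and $2^n\log\langle e^{tB_j^{[A,B]}/2^n}\rangle_{\mu_*} \to t\langle B_j^{[A,B]}\rangle_{\mu_*}$ by Taylor expansion) to give
\[
\log\langle e^{tB_j^{[A,B]}}\rangle_{\mu_*} \;\le\; t\langle B_j^{[A,B]}\rangle_{\mu_*} + O(\alpha(t))\,.
\]

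Substituting this bound at $t = K\pm 1$ and recombining gives that the numerator of the identity above is dominated by $(1+O(\epsilon))\, e^{K\langle B_j^{[A,B]}\rangle_{\mu_*}}\cosh(\langle B_j^{[A,B]}\rangle_{\mu_*})$. The denominator satisfies $\langle \cosh(B_j^{[A,B]})\rangle_{\mu_*} \ge \cosh(\langle B_j^{[A,B]}\rangle_{\mu_*})$ by Jensen's inequality, since $\cosh$ is convex. Taking the ratio delivers the desired bound, with the constant $4$ in $(1+4\epsilon)$ providing enough room for the $O(\epsilon)$ losses from the Herbst step.

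The main technical obstacle is the bookkeeping of constants. One needs the hypothesis $C_{K,\beta}\, a_{[A,B\cup\{j\}]} < \epsilon$ to force $\alpha(t) \ll \epsilon$ for every $|t| \le |K|+1$ used in the $\cosh$-decomposition, and the sum $\sum_k 2^k \log\bigl(1/(1-\alpha(t/2^k))\bigr)$ arising from the doubling iteration must stay below $\log(1+4\epsilon)$. This is why $C_{K,\beta}$ carries the explicit form $(20K\beta)^2 e^{20K\beta}$: the factor $20$ together with the exponential provides the slack needed both to absorb the shift from $K$ to $K\pm1$ and to control the cumulative loss from the halving.
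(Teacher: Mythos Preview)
Your core argument---the Aida--Stroock/Herbst doubling iteration driven by the Poincar\'e inequality for $\mu_* = \langle\cdot\rangle_{[A,B\cup\{j\}]}$, with the carr\'e-du-champ bounded via Lemma~\ref{lem:Xbound}---is exactly the paper's proof. The paper applies the spectral-gap inequality to $f=\exp(KB_j^{[A,B]})$, bounds $\sum_k(1-e^{2K\partial_k B_j^{[A,B]}})^2 \le C_{K,\beta}$, iterates $K\mapsto K/2$, and passes to the limit just as you outline.

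The one difference is your extra transfer step from $\mu_*$ to $\langle\cdot\rangle_{[A,B]}$ via the $\cosh$-identity. The paper does \emph{not} do this: its proof stops at
\[
\big\langle e^{KB_j^{[A,B]}}\big\rangle_{[A,B\cup\{j\}]} \le (1+4\epsilon)\,\exp\big(K\langle B_j^{[A,B]}\rangle_{[A,B\cup\{j\}]}\big),
\]
i.e.\ with $[A,B\cup\{j\}]$ on the left. The subscript $[A,B]$ in the displayed statement is evidently a typo; every downstream application (Lemma~\ref{bd-m}, and through it all of Sections~3--4) uses only the $[A,B\cup\{j\}]$-expectation. Your transfer argument via Jensen on the denominator and the Herbst bound at $t=K\pm1$ on the numerator is correct and recovers the $[A,B]$ version, but it is not needed---and running the Herbst bound at $K\pm1$ rather than at $K$ alone makes landing exactly on the constant $4$ in $(1+4\epsilon)$ more delicate than in the paper's argument (particularly for $|K|$ near $1$, where the shift $K\mapsto K\pm1$ can inflate $C_{K,\beta}$ by more than the available slack).
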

\begin{proof}
Writing $ \langle f^2 \rangle_{[A, B\cup \{j\} ]} =   \langle f ;f \rangle_{[A, B\cup \{j\} ]} + \langle f \rangle^2_{[A, B\cup \{j\} ]}$, we bound
\begin{equation*}
    \langle f^2 \rangle_{[A, B\cup \{j\} ]} \le a_{[A, B\cup \{j\} ]} \sum_{k \not \in A \cup B\cup\{j\}}  \langle (\partial_ k f)^2  \rangle_{[A, B\cup \{j\} ]} + \langle f \rangle^2_{[A, B\cup \{j\} ]},
\end{equation*}
using the spectral gap inequality. Choosing $f= \exp\big(K B_j^{[A,B]}\big)$, we have \linebreak $\partial_k f =   \exp\big(K B_j^{[A,B]}\big)\big(1 - \exp\big(2 K \partial_k B_j^{[A,B]} \big)\big)$ and Lemma \ref{lem:Xbound} implies
		\begin{align*}
    		\sum_{k \not \in A \cup B\cup \{j\}}\big(1 - \exp\big(2 K \partial_k B_j^{[A,B]} \big)\big)^2 &\leq   \sum_{q \geq 2} \frac{(4K)^q}{(q-2)!}  \sum_{ k \not \in A \cup B} |\partial_k B_j|^q \\
		&\leq  \sum_{q \geq 2} \frac{(4K)^q} {(q-2)!} I_q^{[A,B]} \leq (20 K\bn)^2\exp(20 K\bn)
\end{align*}
Here, we used the elementary bound $(e^x - 1)^2 \leq |x|^2 e^{2|x|}$ for $x\in \mathbb{R}$. We conclude 
		\[ \sum_{k \not \in A \cup B\cup\{j\}} \langle (\partial_k f)^2\rangle_{[A, B\cup \{j\} ]} \le C_{K,\beta}  \langle f^2 \rangle_{[A, B\cup \{j\} ]}, \]
for $f= \exp\big(K B_j^{[A,B]}\big)$ and since $C_{K,\beta} a_{[A, B\cup \{j\} ]} < \epsilon$, this implies
  	\[ \langle f^2 \rangle_{[A, B\cup \{j\} ]} \le\big(1 - C_{K,\beta} a_{[A, B\cup \{j\} ]}\big)^{-1} \langle f \rangle^2_{[A, B\cup \{j\} ]}. \]
Now, $  f^{1/2}   = \exp\big(\frac{K}2 B_j^{[A,B]}\big)$ so we can iterate the previous step and find that
		\[\begin{split}
		&\big \langle\exp\big(K B_j^{[A,B]}\big) \big\rangle_{[A, B\cup \{j\} ]}\\
		&\le \prod_{l=0}^{\infty} \frac1{ \big( 1- C_{ 2^{-l}K ,\beta}  a_{[A, B\cup \{j\} ]} \big)^{2^l}  } \lim_{l \to \infty} \big\langle \exp\big(2^{-l}K B^{[A,B]}_j\big) \big\rangle_{[A, B\cup \{j\} ]}^{2^l}. 
		\end{split}\]
Noting that $\lim_{l \to \infty} \big\langle \exp\big(2^{-l}K B^{[A,B]}_j\big) \big\rangle_{[A, B\cup \{j\} ]}^{2^l} = \exp\big(K \langle B_j^{[A,B]} \rangle_{[A, B\cup \{j\} ]}\big) $ and  
		\[\begin{split}  
		&\log  \prod_{l=0}^{\infty} \frac1{ \big( 1- C_{ 2^{-l}K ,\beta}  a_{[A, B\cup \{j\} ]} \big)^{2^l}  } \\
		& \leq \sum_{l=0}^\infty  2^l (- \log) \Big( 1- 2^{-2l} \exp\big(20 K \beta (2^{-l} -1) \big) C_{ K ,\beta}  a_{[A, B\cup \{j\} ]} \Big)\leq 3\epsilon
		\end{split}\]
for $\beta$ small enough (depending on $K$, so that $ \exp (|20 K \beta|)$ is sufficiently close to one). All in all, we conclude that 
		$$ \big \langle\exp\big(K B_j^{[A,B]}\big) \big\rangle_{[A, B\cup \{j\} ]}\leq (1 + 4 \epsilon)\exp\big(K \langle B_j^{[A,B]}\rangle_{[A,B\cup \{j\}]}\big).$$ 
\end{proof}

\begin{proof}[\bf Proof of Lemma \ref{bd-m}]
This is a simple application of the previous lemma, noting that the definition of $C_\beta$ in \eqref{eq:def-cbeta} and the assumption $ C_\beta a_{[A,B\cup \{j\}]} <\epsilon $ allows us to apply Lemma \ref{lem:B-Bbar} uniformly in $K\in [-20,20]$, if $\beta$ is small enough (so that $ \exp( 20^2 \beta)$ is sufficiently close to one). Indeed, with this observation, we obtain directly \eqref{eq:bd-m-3} and \eqref{eq:bd-m-2}. The bound \eqref{eq:bd-m-1} follows similarly, realizing first that by Jensen's inequality and the identity $m_i^{[A,B]} = \big\langle\tanh(B_i^{[A,B]})\big\rangle_{[A,B]}$, we have that
	\[\begin{split}
 	 \frac{1}{1 -  (m^{[A,B]}_j)^2}  \leq \Ex{\frac{1}{1 -\tanh^2(B_j^{[A,B]})}}_{[A,B]}&=  \big \langle \cosh^2(B_j^{[A,B]}) \big\rangle_{[A,B]}  \\
	 &= \frac{\big \langle \cosh^3(B_j^{[A,B]}) \big\rangle_{[A,B\cup \{j\} ]}} {\big \langle \cosh(B_j^{[A,B]}) \big\rangle_{[A,B\cup\{j\}]}} 
	\end{split}\]  
and then applying twice \eqref{eq:bd-m-2}.   
\end{proof}


\vspace{0.2cm}
\noindent\textbf{Acknowledgements.} A. A. was sponsored by a Harvard Term-Time Graduate Fellowship as well as NSF grant DMS-2102842. C. B. acknowledges support by the Deutsche Forschungsgemeinschaft (DFG, German Research Foundation) under Germany’s Excellence Strategy – GZ 2047/1, Projekt-ID 390685813. The work of C. X. is partially funded by a Simons Investigator award. The work of H.-T. Y. is partially supported by the NSF grant DMS-1855509 and DMS-2153335 and a Simons Investigator award. 
\vspace{0.2cm}

\bibliography{Spectral}

\bibliographystyle{abbrv}

\end{document}